\theoremstyle{plain}
\newtheorem{thm}{Theorem}[section]
\newaliascnt{cor}{thm}
\newaliascnt{prop}{thm}
\newaliascnt{lem}{thm}
\newaliascnt{rem}{thm}
\newtheorem{cor}[cor]{Corollary}
\newtheorem{prop}[prop]{Proposition}
\newtheorem{lem}[lem]{Lemma}
\newtheorem{rem}[rem]{Remark}
\theoremstyle{definition}
\newaliascnt{defn}{thm}
\newaliascnt{asu}{thm}
\newaliascnt{con}{thm}
\newtheorem{asu}[asu]{Assumption}
\newcounter{stp}
\newcounter{stpi}
\newcounter{stpci}
\newcounter{stpiii}
\theoremstyle{remark}
\newaliascnt{exa}{thm}
\newaliascnt{masu}{thm}
\newaliascnt{nota}{thm}
\newaliascnt{sett}{thm}
\numberwithin{equation}{section}
\setlist[enumerate]{font = \normalfont}
\renewcommand {\S}	{\mathbb{S}}
\newcommand {\N}	{\mathbb{N}}
\newcommand {\R}	{\mathbb{R}}
\newcommand {\C}	{\mathbb{C}}
\newcommand {\E}	{\mathbb{E}}
\newcommand{\diag}[1]{\mathrm{diag}({#1})}
\renewcommand{\d}{\, \mathrm{d}}
\DeclareMathOperator{\Id}{Id}
\newcommand{\Hinfty}{\mathcal{H}^\infty}
\newcommand{\BIP}{\mathcal{BIP}}
\newcommand{\sA}{\mathcal{A}}
\newcommand{\tsA}{\tilde{\sA}}
\newcommand{\sE}{\mathcal{E}}
\newcommand{\sF}{\mathcal{F}}
\newcommand{\sX}{\mathcal{X}}
\newcommand{\sY}{\mathcal{Y}}
\newcommand{\sP}{\mathcal{P}}
\newcommand{\sL}{\mathcal{L}}
\newcommand{\sS}{\mathcal{S}}
\newcommand{\sR}{\mathcal{R}}
\renewcommand{\i}{\mathrm{i}}
\newcommand{\mre}{\mathrm{e}}
\newcommand{\D}{\mathrm{D}}
\renewcommand{\H}{\mathrm{H}}
\newcommand{\T}{\mathrm{T}}
\newcommand{\I}{\mathrm{I}}
\newcommand{\per}{\mathrm{per}}
\newcommand{\bc}{\mathrm{b.c.}}
\newcommand{\sigmabar}{\bar{\sigma}}
\newcommand{\sigd}{\sigma_\delta}
\newcommand{\atm}{\mathrm{atm}}
\newcommand{\ocn}{\mathrm{ocn}}
\newcommand{\ice}{\mathrm{ice}}
\newcommand{\ssbc}[1]{{#1}_{\bc}}
\newcommand{\sssigmabar}[1]{{#1}_{\sigmabar}}
\newcommand{\ssatm}[1]{{#1}_{\atm}}
\newcommand{\ssocn}[1]{{#1}_{\ocn}}
\newcommand{\ssice}[1]{{#1}_{\ice}}
\newcommand{\hatm}{\ssatm{h}}
\newcommand{\hocn}{\ssocn{h}}
\newcommand{\Omegaatm}{\ssatm{\Omega}}
\newcommand{\Omegaocn}{\ssocn{\Omega}}
\newcommand{\GaD}{\Gamma_{\D}}
\newcommand{\GaN}{\Gamma_{\mathrm{N}}}
\newcommand{\Gau}{\Gamma_u}
\newcommand{\Gab}{\Gamma_b}
\newcommand{\Gai}{\Gamma_i}
\newcommand{\Gao}{\Gamma_o}
\newcommand{\Gal}{\Gamma_l}
\newcommand{\Galatm}{\Gamma_{l,\atm}}
\newcommand{\Galocn}{\Gamma_{l,\ocn}}
\newcommand{\n}{\nu}
\newcommand{\ssn}[1]{\n_{#1}}
\newcommand{\nGao}{\ssn{\Gamma_o}}
\newcommand{\dk}[1]{\partial_{#1}}
\newcommand{\dt}{\dk{t}} 
\newcommand{\dz}{\dk{z}} 
\newcommand{\tr}{\mathrm{tr}}
\newcommand{\sstr}[1]{\tr_{#1}}
\newcommand{\trGau}{\sstr{\Gau}}
\newcommand{\trGab}{\sstr{\Gab}}
\newcommand{\trGai}{\sstr{\Gai}}
\newcommand{\trGao}{\sstr{\Gao}}
\newcommand{\ddn}{\dk{\n}}
\newcommand{\ssddn}[1]{\partial_{\ssn{#1}}}
\newcommand{\ddnGau}{\ssddn{\Gau}}
\newcommand{\ddnGai}{\ssddn{\Gai}}
\newcommand{\ddnGao}{\ssddn{\Gao}}
\newcommand{\ddnr}[1]{\ddn^{#1}}
\newcommand{\ddnq}{\ddnr{q}}
\newcommand{\ddnp}{\ddnr{q'}}
\newcommand{\ReHatm}{\mathrm{Re}_{\H,\atm}}
\newcommand{\Rezatm}{\mathrm{Re}_{z,\atm}}
\newcommand{\ReHocn}{\mathrm{Re}_{\H,\ocn}}
\newcommand{\Rezocn}{\mathrm{Re}_{z,\ocn}}
\newcommand{\Catm}{\ssatm{C}}
\newcommand{\muocn}{\ssocn{\mu}}
\newcommand{\Ratm}{\ssatm{R}}
\newcommand{\ratm}{\ssatm{\rho}}
\newcommand{\rice}{\ssice{\rho}}
\newcommand{\Coi}{C_{\mathrm{o,i}}}
\newcommand{\sAoi}{\sA_{\mathrm{o,i}}}
\newcommand{\eps}{\varepsilon}
\renewcommand{\phi}{\varphi}
\newcommand{\vatm}{\ssatm{v}}
\newcommand{\watm}{\ssatm{w}}
\newcommand{\vocn}{\ssocn{v}}
\newcommand{\wocn}{\ssocn{w}}
\newcommand{\patm}{\ssatm{\pi}}
\newcommand{\pocn}{\ssocn{\pi}}
\newcommand{\vice}{\ssice{v}}
\newcommand{\uatm}{\ssatm{u}}
\newcommand{\uocn}{\ssocn{u}}
\newcommand{\fatm}{\ssatm{f}}
\newcommand{\focn}{\ssocn{f}}
\newcommand{\fice}{\ssice{f}}
\newcommand{\tatm}{\ssatm{\tau}}
\newcommand{\tocn}{\ssocn{\tau}}
\newcommand{\Sa}{\mathrm{S}_\mathrm{a}}
\newcommand{\Sh}{\mathrm{S}_\mathrm{h}}
\newcommand{\xH}{x_\mathrm{H}}
\renewcommand{\bar}[1]{\overline{#1}}
\newcommand{\vbar}{\bar{v}}
\newcommand{\vtilde}{\tilde{v}}
\newcommand{\intatm}[1]{\int_{\Omegaatm} {#1} \d(\xH,z)}
\newcommand{\intocn}[1]{\int_{\Omegaocn} {#1} \d(\xH,z)}
\newcommand{\intO}[1]{\int_{\Omega} {#1} \d(\xH,z)}
\newcommand{\intpO}[1]{\int_{\partial\Omega} {#1} \d \rS}
\newcommand{\intG}[1]{\int_G {#1} \d \xH}
\newcommand{\scalarprod}[3]{\langle {#1}, {#2} \rangle_{#3}}
\newcommand{\scalarprodLtwo}[3]{\scalarprod{#1}{#2}{\rL^2({#3})}}
\newcommand{\scalarprodLtwoatm}[2]{\scalarprod{#1}{#2}{\rL^2(\Omegaatm)}}
\newcommand{\scalarprodLtwoocn}[2]{\scalarprod{#1}{#2}{\rL^2(\Omegaocn)}}
\newcommand{\scalarprodLtwoG}[2]{\scalarprod{#1}{#2}{\rL^2(G)}}
\renewcommand{\div}{\mathrm{div} \, }
\newcommand{\divH}{\mathrm{div}_{\H} \,}
\newcommand{\nablaH}{\nabla_{\H}}
\newcommand{\DeltaH}{\Delta_{\H}}
\newcommand{\Deltaa}{d_{\mathrm{a}}\DeltaH}
\newcommand{\Deltah}{d_{\mathrm{h}}\DeltaH}
\newcommand{\drh}{d_{\mathrm{h}}}
\newcommand{\dra}{d_{\mathrm{a}}}
\newcommand{\frh}{f_{\mathrm{h}}}
\newcommand{\fra}{f_{\mathrm{a}}}
\newcommand{\crh}{c_{\mathrm{h}}}
\newcommand{\cra}{c_{\mathrm{a}}}
\renewcommand{\th}{\Tilde{h}}
\newcommand{\ta}{\Tilde{a}}
\newcommand{\oh}{\overline{h}}
\newcommand{\oa}{\overline{a}}
\newcommand{\tri}{\triangle}
\newcommand{\trid}{\triangle_\delta}
\newcommand{\Sd}{S_\delta}
\newcommand{\hra}{\hookrightarrow}
\newcommand{\rC}{\mathrm{C}}
\newcommand{\rL}{\mathrm{L}}
\newcommand{\rW}{\mathrm{W}}
\newcommand{\rH}{\H}
\newcommand{\rB}{\mathrm{B}}
\newcommand{\rF}{\mathrm{F}}
\newcommand{\rS}{\mathrm{S}}
\newcommand{\rLsigmabar}{\sssigmabar{\rL}}
\newcommand{\rLq}{\rL^q}
\newcommand{\rLp}{\rL^p}
\newcommand{\rHsq}[1]{\rH^{{#1},q}}
\newcommand{\rWsq}[1]{\rW^{{#1},q}}
\newcommand{\rWsp}[1]{\rW^{{#1},p}}
\newcommand{\rBrpq}[3]{\rB^{{#1}}_{{#2}{#3}}}
\newcommand{\rBsqp}[1]{\rBrpq{#1}{q}{p}}
\newcommand{\rHsqper}[1]{\rH_\per^{{#1},q}}
\newcommand{\rBsqpper}[1]{\rB^{{#1}}_{{q}{p},\per}}
\newcommand{\rLz}[1]{\rL_z^{#1}}
\newcommand{\rLxy}[1]{\rL_{xy}^{#1}}
\newcommand{\rWz}[2]{\rW_z^{{#1},{#2}}}
\newcommand{\rWxy}[2]{\rW_{xy}^{{#1},{#2}}}
\newcommand{\rBrpqz}[3]{\rB^{{#1}}_{{#2}{#3},{z}}}
\newcommand{\rBrpqxy}[3]{\rB^{{#1}}_{{#2}{#3},{xy}}}
\newcommand{\rHsqbcper}[1]{\rH^{{#1},q}_{\per,\bc}}
\newcommand{\rLqsigmabar}{\rLsigmabar^q}
\newcommand{\rLqOmegasigmabar}{\rLqsigmabar(\Omega)}
\newcommand{\rLqatm}{\rLqsigmabar(\Omegaatm)}
\newcommand{\rLrocn}[1]{\sssigmabar{\rL}^{#1}(\Omegaocn)}
\newcommand{\rLqocn}{\rLrocn{q}}
\newcommand{\rLtwoocn}{\rLrocn{2}}
\newcommand{\rLpocn}{\rLrocn{q'}}
\newcommand{\rLratm}[1]{\sssigmabar{\rL}^{#1}(\Omegaatm)}
\newcommand{\rLtwoatm}{\rLratm{2}}
\newcommand{\rLtwoG}{\rL^2(G)}
\newcommand{\rLtwoOatm}{\rL^2(\Omegaatm)}
\newcommand{\rLinftyG}{\rL^\infty(G)}
\renewcommand{\P}{\mathcal{P}}
\newcommand{\PH}{\P_{\H}}
\newcommand{\Patm}{\ssatm{\P}}
\newcommand{\Pocn}{\ssocn{\P}}
\newcommand{\A}{\mathrm{A}}
\newcommand{\Aatm}{\mathrm{A}^{\atm}}
\newcommand{\Aocn}{\mathrm{A}^{\ocn}}
\newcommand{\An}{\Aocn_0}
\newcommand{\Anr}[1]{\Aocn_{0,#1}}
\newcommand{\Anq}{\Anr{q}}
\newcommand{\Anp}{\Anr{q'}}
\newcommand{\Am}{\mathrm{A}_{\mathrm{m}}}
\newcommand{\Abc}{\ssbc{\mathrm{A}}}
\newcommand{\Amocn}{\Aocn_{\mathrm{m}}}
\newcommand{\AH}{\mathrm{A}^{\H}}
\newcommand{\sAH}{\mathcal{A}^{\H}}
\newcommand{\Llambda}[1]{\mathrm{L}_{#1}}
\newcommand{\Nlambda}[1]{\mathrm{N}_{#1}}
\newcommand{\Aice}{\mathrm{A}^\ice}
\newcommand{\Aiceom}{\mathrm{A}_\omega^\ice}
\newcommand{\B}{\mathrm{B}}
\newcommand{\Ba}{\B_a}
\newcommand{\Bh}{\B_h}
\newcommand{\J}{\mathrm{J}}
\newcommand{\Je}{\J_1}
\newcommand{\Jz}{\J_2}
\newcommand{\bilinu}[2]{({#1}\cdot \nabla) {#2}}
\newcommand{\bilinH}[2]{( {#1} \cdot \nablaH ) {#2}}
\newcommand{\bilinz}[2]{w({#1}) \cdot \dz  {#2}}
\newcommand{\bilin}[2]{\bilinH{#1}{#2}+\bilinz{#1}{#2}}
\newcommand{\bilinHice}{\bilinH{\vice}{\vice}}
\newcommand{\bilinocn}{\bilin{\vocn}{\vocn}}
\newcommand{\bilinatm}{\bilin{\vatm}{\vatm}}
\newcommand{\Fice}{\mathrm{F}^\ice}
\newcommand{\Ficeom}{\mathrm{F}_\omega^\ice}
\newcommand{\Vatm}{V^\atm}
\newcommand{\Vocn}{V^\ocn}
\newcommand{\Vice}{V^\ice}
\newcommand{\Watm}{W^\atm}
\newcommand{\Wocn}{W^\ocn}
\newcommand{\Wice}{W^\ice}
\newcommand{\sYzero}{\sY_0}
\newcommand{\sYzeroa}{\sY_0^\atm}
\newcommand{\sYzeroo}{\sY_0^\ocn}
\newcommand{\sYzeroi}{\sY_0^\ice}
\newcommand{\sYone}{\sY_1}
\newcommand{\sYbeta}{\sY_\beta}
\newcommand{\sYgamma}{\sY_\gamma}
\newcommand{\sYgammaa}{\sY_\gamma^\atm}
\newcommand{\sYgammao}{\sY_\gamma^\ocn}
\newcommand{\sYgammai}{\sY_\gamma^\ice}
\newcommand{\sYtheta}{\sY_\theta}
\newcommand{\sYthetaa}{\sY_\theta^\atm}
\newcommand{\sYthetao}{\sY_\theta^\ocn}
\newcommand{\sYthetai}{\sY_\theta^\ice}
\newcommand{\oB}{\overline{B}}
\newcommand{\kone}{\kappa_1}
\newcommand{\ktwo}{\kappa_2}
\newcommand{\Fatm}{\ssatm{\mathrm{F}}}
\newcommand{\Focn}{\ssocn{\mathrm{F}}}
\newcommand{\on}[1]{\text{ on } {#1}}
\renewcommand{\for}{\text{ for }}
\newcommand{\andtext}{\text{ and }}
\newcommand{\qandq}{\quad \text{and} \quad}
\newcommand{\onGT}{\on{G \times [0,T]}}
\newcommand{\onOmega}{\on{\Omega}}
\newcommand{\onOmegaT}{\on{\Omega \times [0,T]}}
\newcommand{\onOmegaatmT}{\on{\Omegaatm \times [0,T]}}
\newcommand{\onOmegaocn}{\on{\Omegaocn}}
\newcommand{\onOmegaocnT}{\on{\Omegaocn \times [0,T]}}
\title{Interaction of Geophysical Flows  with Sea Ice Dynamics}
\author{Tim Binz}
\address{Princeton University,
Program in Applied \& Computational Mathematics, Fine Hall, Washington Road, 08544 Princeton, NJ, USA.}
\email{tb7523@princeton.edu}
\author{Felix Brandt}
\address{University of California, Berkeley, Department of Mathematics, Evans Hall, 94720 Berkeley, CA, USA.}
\email{fbrandt@berkeley.edu}
\author{Matthias Hieber}
\address{Technische Universit\"{a}t Darmstadt,
Schlo\ss{}gartenstra{\ss}e 7, 64289 Darmstadt, Germany.}
\email{hieber@mathematik.tu-darmstadt.de}
\subjclass{35Q35, 35Q86, 35K59, 86A05, 86A10}%
\keywords{Primitive equations, Hibler's sea ice model, hydrostatic Dirichlet-to-Neumann operator, $\Hinfty$-calculus, global strong well-posedness}
\dedicatory{Dedicated to the memory of Guiseppe Da Prato}
\begin{document}
\begin{abstract}
This article establishes local strong well-posedness and global strong well-posedness close to constant equilibria of a model coupling the primitive equations of ocean and atmosphere dynamics with Hibler's viscous-plastic sea ice model. 
In order to treat the coupling conditions, an approach involving the  hydrostatic Dirichlet and Dirichlet-to-Neumann operator is developed. 
Mapping properties of the  latter operators are investigated for the first time and are of central importance for showing that the operator associated with the linearized coupled system admits a bounded $\Hinfty$-calculus on suitable $\rL^q$-spaces. 
Quasilinear methods allow then to obtain the strong well-posedeness results described above. 
\end{abstract}

\maketitle

\section{Introduction}
\label{sec:intro}

In a series of articles, Lions, Teman and Wang developed a coupled atmosphere-ocean model, the so-called CAO-model, describing the atmosphere and the ocean by the primitive equations coupled by nonlinear boundary conditions, see  \cite{LTW:92a, LTW:92b, LTW:93, LTW:95}. 
Their work can be regarded as the starting point of the mathematical analysis of a large class of geophysical flow models.
In fact, Lions, Temam and Wang analyzed the CAO-model consisting of two primitive equations coupled by wind driven boundary conditions and proved the existence of weak solutions to this model.

Starting from there, many authors studied the equations describing in particular the dynamics of the ocean by the incompressible primitive equations. 
In a seminal paper, Cao and Titi \cite{CT:07} established global strong well-posedness of the incompressible primitive equations for initial data in $\rH^1$ by energy methods.  
For related results, we refer to the work of Kukavica and Ziane \cite{KZ:07}.
A different  approach to the primitive equations by means of evolution equations was developed by Hieber and Kashiwabara \cite{HK:16} as well as Giga, Gries, Hieber, Hussein and Kashiwabara \cite{GGHHK:20, GGHHK:20b, GGHHK:21}.

The primitive equations subject to {\em stochastic wind driven boundary conditions} have been investigated in \cite{BHHS:22}, adapting an approach by Da Prato and Zabcyk \cite{DPZ:96} for stochastic boundary
conditions to the situation of geophysical flows.
More precisely, consider the primitive equations, where the  standard deterministic boundary conditions  are replaced by a stochastic
boundary condition modeling the wind as $\dz V = h_b \dt \omega$, with a function $h_b$ defined at the boundary and $\omega(t) = \sum_{n=1}^\infty \langle g,e_n \rangle W(t)e_n$ for a suitable function $g$ and a cylindrical Wiener process $W$ on a separable Hilbert space $H$ with orthonormal basis $(e_n)$.
Following Da Prato and Zabcyk \cite{DPZ:96}, one can express a solution $V$ to the primitive equations subject to the above stochastic boundary condition by a solution to the equation
\begin{equation*}
    \d Z(t) + A Z(t) \d t = A[Nh_b(t)g] \d W(t), \quad Z(0)=Z_0,
\end{equation*}
subject to deterministic boundary conditions. 
Here $N$ is the so-called Neumann operator mapping deterministic inhomogeneous boundary data to the solution of the associated stationary hydrostatic Stokes problem, and $A$ denotes the hydrostatic Stokes operator. 
A similar method has been used in \cite{HHS:23}.   

In this article, we turn our attention to an extension of the above CAO-model. 
Indeed,  we include sea ice dynamics into the CAO-model and consider a coupled atmosphere-sea ice-ocean model.
We then analyze this new coupled model, called the CIAO-model, and establish local strong well-posedness for large data and global strong well-posedness for data close to constant equilibria.  

Large scale sea ice dynamics is very often described by using Hibler's model \cite{Hib:79}. 
There the dynamics of the velocity, the thickness and the concentration of sea ice is described by a set of quasilinear equations.
They were investigated numerically by different communities, see e.\ g.\ \cite{MK:21,SK:18}. For a survey on the modeling of sea ice, we refer to \cite{Gol:20}. 
Let us note that the rigorous analysis of the Hibler's sea ice model began only recently by the article of Brandt, Disser, Haller-Dintelmann and Hieber \cite{BDHH:22}, see also \cite{Bra:25}, as well as the article of Liu, Thomas and Titi \cite{LTT:22}. 
The associated system of equations is coupled, degenerate, quasilinear and parabolic-hyperbolic. 
The interaction problem of sea ice with a rigid body has been studied in \cite{BBH:22}.

In the present CIAO-model, the primitive equations for the ocean and the atmosphere are coupled with Hibler's quasilinear equations for sea ice. 
We investigate the interaction problem of the atmosphere and the ocean when additionally considering sea ice as a thin layer coupled to the atmosphere and the ocean. 
For a precise formulation of the coupling conditions, see  \eqref{eq:tatm}, \eqref{eq:tocn} and \eqref{eq:coupling} in \autoref{sec:coupled model and main result}. 

Our approach to the CIAO-model can be described as follows: 
Motivated by the quasilinear structure of Hibler's model, we write the CIAO-model as a quasilinear evolution equation, subject to coupling conditions. 
In order to treat these coupling conditions, inspired by the study of the Neumann operator $N$ as in \cite{BHHS:22}, we develop a decoupling approach involving the  hydrostatic Dirichlet and Dirichlet-to-Neumann operator. 
Such operators have been studied intensively in the parabolic situation and also for fluids but not for geophysical flows as the primitive equations.
Our well-posedness results of the stationary hydrostatic Stokes problem with inhomogeneous boundary conditions shown in \autoref{sec:stationary problem} are of central importance. 
Indeed, the hydrostatic Dirichlet and Dirichlet-to-Neumann operators allow to show that the linearized operator associated with the coupled system admits a bounded $\Hinfty$-calculus on suitable $\rL^q$-spaces. 
This enables us to use modern quasilinear methods for solving the  coupled system in the strong setting.

At this point, some comments on the background of these quasilinear methods are in order.
The so-called {\em Da Prato-Grisvard theorem} \cite{DPG:75} on the sum of two operators is a classical milestone in the theory of maximal regularity of solutions of evolution equations. 
It reads as follows:
Let~$A$ be the generator of a bounded analytic semigroup $(\mre^{tA})_{t\geq 0}$ on a Banach space $X$ with domain $D(A)$ such that $0 \in \rho(A)$, and consider $\theta \in (0,1)$ as well as  $1 \le q < \infty$. 
Then there exists a constant  $C>0$ such that for all $f \in \rL^q(\R_+;D_A(\theta,q))$, the function $u$ given for $t>0$ by $u(t):=\int_0^t \mre^{(t-s)A}f(s)\d s$ satisfies the estimate
\begin{equation*}
    \|Au \|_{\rL^q(\R_+;D_A(\theta,q))} \leq C \cdot \|f\|_{\rL^q(\R_+;D_A(\theta,q))}. 
\end{equation*}
Here $D_A(\theta,q)$ is defined by $D_A(\theta,q)= \{x \in X: [x]_{\theta,q}:= (\int_0^\infty \|t^{1-\theta} A\mre^{tA}x\|_X^q \nicefrac{\d t}{t})^{1/q} < \infty \}$.
When equipped with the norm $\|x\|_{\theta,q}=\|x\|_X + [x]_{\theta,q}$, the space $D_A(\theta,q)$ becomes a Banach space and coincides with the real interpolation space $(X,D(A))_{\theta,q}$. 
In addition, if $A$ is invertible, then the norm of the real interpolation space is equivalent to the homogeneous norm $[\cdot]_{\theta,q}$. 
This result acted then as a starting point for the very many developments concerning the maximal regularity  approach to evolution equations.
We refer here e.\ g.\ to the work of Da Prato and Grisvard \cite{DPG:79} on abstract nonlinear evolution equations.

Motivated free boundary problems in fluid dynamics, a homogeneous version of the {Da~Prato-Grisvard~theorem} was developed in \cite{DHMT:22}. 
More precisely, if $D_A(\theta,q)$ is endowed with the homogeneous norm $[\cdot]_{\theta,q}$, then a global maximal $L^q$-regularity result was deduced for $1\leq q <\infty$ {\em without assuming invertibility} of the generator of the associated semigroup.
This yields in particular a framework allowing for global-in-time control of the change of Eulerian to Lagrangian coordinates, which is decisive  in various free boundary problems in  fluid dynamics, see \cite{DHMT:22} for more information. 
In particular, the case $q=1$ is of crucial importance for global existence results for certain free boundary problems in the critical space
$\rL^1(\R_+;\dot{\rB}^s_{p,1}(\R_+))$.

Further important applications of the classical Da Prato-Grisvard theorem \cite{DPG:75} concern time periodic solutions to quasilinear evolution equations.
In \cite{BH:23}, the Da Prato-Grisvard theorem was extended to the quasilinear setting to show the existence of a unique time periodic strong solution to quasilinear evolution equations in $D_{A(0)}(\theta,q)$ provided the underlying linearized operator $A(0)$ generates a bounded analytic semigroup on $X$ with $0 \in \rho(A(0))$, the nonlinearities satisfy certain regularity and Lipschitz conditions, and the periodic forcing terms are sufficiently small.
For further applications, see also \cite{HKKT:20}. 

The structure of this article is as follows:
In \autoref{sec:prelim}, we introduce the  notation and give basic concepts used throughout the paper.
\autoref{sec:coupled model and main result} is dedicated to deriving the coupling conditions as well as the complete coupled atmosphere-sea ice-ocean system, and to stating the two main results 
on the local strong well-posedness and the global strong well-posedness close to constant equilibria, respectively.
We then reformulate the coupled system of equations \eqref{eq:coupled system} as a quasilinear evolution equation in \autoref{sec:coupled system as qee}.
The sea ice equations on the quadratic domain $(0,1) \times (0,1)$ are investigated in \autoref{sec:Hibler}, while we recall properties of the primitive equations on cylindrical domains in \autoref{sec:pe}.
The central \autoref{sec:stationary problem} deals with the stationary hydrostatic Stokes problem and the hydrostatic Dirichlet-to-Neumann operator. 
In \autoref{sec:decoupling}, we present the argument to prove the bounded $\Hinfty$-calculus of the coupled operator, and we show Lipschitz estimates of the coupled system.
\autoref{sec:proofs local} and \autoref{sec:proofs global} are  concerned with the proof of the  main results.

\section{Preliminaries}\label{sec:prelim}

We use $x$ and $y$ as well as $\xH := (x,y)$ to denote horizontal coordinates, whereas $z$ is employed for the vertical coordinate and $t$ is the time variable.
The sub- or superscripts $o^\atm$, $o^\ocn$ and $o^\ice$ are used to indicate whether objects $o$ in the context of the atmosphere, the ocean or sea ice are considered.
For instance, $\uatm$, $\vatm$ and $\watm$ represent the full, horizontal and vertical velocity of the atmospheric wind, $\uocn$, $\vocn$ and $\wocn$ denote the full, horizontal and vertical velocity of the ocean, and $\vice$ is the horizontal velocity of the sea ice.
With $v = (\vice,h,a)$, the principle variable of the system will be denoted by $u = (\vatm,\vocn,v) = (\vatm,\vocn,\vice,h,a)$, while the respective pressure terms for the atmosphere and the ocean will be denoted by $\patm$ and $\pocn$.
For the sea ice part, with $G:=(0,1) \times (0,1)$, we also introduce the mean ice thickness $h \colon G \times [0,T] \to (\kone,\ktwo)$ and the mean ice compactness $a \colon G \times [0,T] \to (0,1)$, defined as the ratio of thick ice per area.
We remark that $\kone > 0$ sufficiently small is a parameter indicating the transition to open water, i.\ e., a value of $h(x,y,t)$ less than $\kone$ means that at $(x,y) \in G$ at time $t$ there is open water.
In contrast, $\ktwo > 0$ sufficiently large denotes an upper bound for the mean ice thickness.

Unless stated otherwise, domains will be denoted by $\Omega$, possibly with a sub- or superscript, while $\Gamma$ represents boundaries.
The domains under consideration are $\Omegaatm = G \times (\ktwo,\hatm)$ for the atmosphere, where $\ktwo > 0$ is the upper bound for the mean ice thickness, as well as $\Omegaocn = G \times (-\hocn,0)$ for the ocean.
The upper boundary is denoted by $\Gau := G \times \{\hatm\}$, the lower boundary by $\Gab := G \times \{-\hocn\}$, and the interfaces between the ocean and the sea ice and the sea ice and the atmosphere are denoted by $\Gao := G \times \{0\}$ and $\Gai := G \times \{\ktwo\}$, respectively.
We remark that these boundaries and interfaces will be identified with $G$ in the sequel.
In addition, $\Galatm := \partial G \times (\ktwo,\hatm)$ represents the lateral boundary associated to the atmosphere, whereas $\Galocn := \partial G \times (-\hocn,0)$ denotes the lateral boundary of the ocean.
The thickness of the sea ice, expressed by $h$, is relatively small compared to the heights of the ocean and the atmosphere.
For simplicity, we assume that the interfaces between the sea ice and the ocean as well as the atmosphere are flat in this article.
The more involved consideration of free surfaces is left to future studies.

Furthermore, $\Delta = \dk{x}^2 + \dk{y}^2 + \dk{z}^2$, $\nabla = (\dk{x}, \dk{y}, \dk{z})^\top$ and $\div f = \dk{x} f_1 + \dk{y} f_2 + \dk{z} f_3$ represent the Laplacian, the gradient and the divergence of a vector field $f \colon \R^3 \to \R^3$, while the respective horizontal objects are denoted by $\DeltaH = \dk{x}^2 + \dk{y}^2$, $\nablaH = (\dk{x}, \dk{y})^\top$ and $\divH g = \dk{x} g_1 + \dk{y} g_2$ for a vector field $g \colon \R^2 \to \R^2$.
The trace and the normal derivative on some boundary $\Gamma$ will be denoted by $\tr_\Gamma$ and $\ssddn{\Gamma}$, respectively.
As it is only used in the context of the sea ice, the deformation tensor $\eps$ associated to sea ice is denoted by $\eps = \eps(\vice) = \frac{1}{2}\left(\nablaH \vice + (\nablaH \vice)^\top\right)$.

Consider a domain $\Omega \subset \R^n$, $p,q \in [1,\infty]$, $k \in \N$ and $s \ge 0$.
We denote by $\rLq(\Omega)$ the classical $\rLq$-spaces, by $\rWsq{k}(\Omega)$ or $\rHsq{k}(\Omega)$ the classical Sobolev spaces of order $k$, by $\rHsq{s}(\Omega)$ the Bessel potential spaces, by $\rWsq{s}(\Omega)$ the fractional Sobolev spaces or Sobolev-Slobodeckij spaces and by $\rBrpq{s}{q}{p}(\Omega)$ the Besov spaces.
For details on these spaces, we refer e.\ g.\ to the monographs \cite{Tri:78} or \cite{Ama:19}.
If $\Omega$ is a bounded domain with sufficiently smooth boundary $\Gamma$, we use the subscript $\bc$ to indicate that the respective space is considered with boundary conditions.

For $\Omega = G \times (a,b)$, $-\infty < a < b < \infty$, we denote by $\vbar$ the vertical average of $v$, i.\ e., $\vbar := \frac{1}{b-a} \int_a^b v(\cdot,\cdot,\xi) \d \xi$.
The primitive equations are modeled on the hydrostatic solenoidal $\rLq$-spaces given by the $\rLq$-closure of the smooth hydrostatic functions, so following the approach developed in \cite[Sections~3 and 4]{HK:16}, we set
\begin{equation*}
	\rLqsigmabar(\Omega) 
	:= \bar{\{ v \in \rC_\per^\infty(\overline{\Omega})^2 \colon \divH\vbar = 0 \}}^{\| \cdot \|_{\rLq(\Omega)}},
\end{equation*} 
and horizontal periodicity is incorporated by the function spaces $\rC_\per^\infty(\overline{\Omega})$ and $\rC_\per^\infty(\overline{G})$.
In that respect, for $p,q \in (1,\infty)$ and $s \in [0,\infty)$, we define the Bessel potential spaces with horizontal periodicity by
\begin{equation*}
    \rHsqper{s}(\Omega) := \bar{\rC_\per^\infty(\overline{\Omega})}^{\| \cdot \|_{\rHsq{s}(\Omega)}} \qandq \rHsqper{s}(G) := \bar{\rC_\per^\infty(\overline{G})}^{\| \cdot \|_{\rHsq{s}(G)}},
\end{equation*}
and the Besov spaces with horizontal periodicity $\rBsqpper{s}(\Omega)$ and $\rBsqpper{s}(G)$ are defined analogously.
The Bessel potential spaces $\rHsq{s}(\Omega)$ and Besov spaces $\rBsqp{s}(\Omega)$ are defined as restrictions of the respective spaces on the whole space to $\Omega$. 
We remark that $\rHsqper{0} := \rLq$.

\section{Main Results for the Coupled System}
\label{sec:coupled model and main result}

The main assumptions with regard to the coupling of the atmosphere and the ocean with sea ice are that the first two exert the forces $\tatm$ and $\tocn$ on the sea ice, and that the velocity of the sea ice coincides with the horizontal velocity of the ocean at the water surface.
More precisely, we assume that the force exerted by the atmosphere is given by
\begin{equation}
	\tatm = \ratm \Catm |\trGai \vatm | \Ratm \trGai \vatm, \on{G}.
	\label{eq:tatm}
\end{equation}

The force exerted by the ocean on the sea ice is supposed to be proportional to the shear rate, so it is of the form 
\begin{equation}
	\tocn = -\muocn \ddnGao \vocn, \on{G}.
	\label{eq:tocn}
\end{equation}
This is in accordance with a plane Couette flow for a Newtonian fluid, where the stress tensor is given by $\T - \pi \I_3$, with $\T$ denoting the usual Cauchy stress tensor, 
compare e.\ g.\ \cite[Section~7.2]{Rud:19}.
The condition on the equality of the velocities on the interface of the ocean and the sea ice can be expressed by
\begin{equation}
	\trGao \vocn = \vice, \on{G}.
	\label{eq:coupling} 
\end{equation}
In the above, $\ratm$ represents the density for air, $\Catm$ denotes an air drag coefficient, and $\Ratm$ is a rotation matrix operating wind vectors, while $\muocn > 0$ is the viscosity associated to the fluid described by the primitive equations of the ocean.

Concerning the coupling conditions, the form of the force due to the atmospheric wind in \eqref{eq:tatm} is taken from Hibler's classical article \cite{Hib:79}.
For the coupling of the sea ice and the ocean, it is assumed, as written above, that the force exerted by the ocean on the sea ice is proportional to the shear rate in \eqref{eq:tocn}, which
corresponds to a balance of forces at the sea ice-ocean interface.  The continuity of the sea ice and ocean velocities at the interface as expressed in \eqref{eq:coupling} amounts to saying that sea ice
and ocean move with the same velocity at the interface. This is inspired by the consideration of fluid-structure interaction problems where the structure is located at a part of the
fluid boundary.

Before providing the complete coupled system, we briefly discuss the internal ice stress.
We follow \cite{Hib:79} and refer e.\ g.\ to \cite[Section~2]{BDHH:22} for a more thorough treatment.
In fact, the viscous-plastic rheology is expressed by a constitutive law linking the internal ice stress $\sigma$ and the deformation tensor $\eps$ via an internal ice strength $P$ as well as nonlinear bulk and shear viscosities such that the principal components of the stress lie on an elliptical yield curve.
With $e > 1$ denoting the ratio of major to minor axes, the constitutive law is given by
\begin{equation*}
    \sigma = \frac{1}{e^2} \frac{P}{\tri(\eps)} \eps + \left(1 - \frac{1}{e^2}\right) \frac{P}{2 \tri(\eps)} \tr(\eps) \I_2 - \frac{P}{2}\I_2,
\end{equation*}
where for the ice thickness $h$, the ice compactness $a$ and given constants $p^* > 0$ and $c > 0$, the ice strength $P$ takes the shape $P = P(h,a) = p^* h \exp(-c(1-a))$.
Moreover, we have
\begin{equation*}
    \tri^2(\eps) := \left(\eps_{11}^2 + \eps_{22}^2\right) \left(1 + \frac{1}{e^2}\right) + \frac{4}{e^2} \eps_{12}^2 + 2 \eps_{11} \eps_{22} \left(1 - \frac{1}{e^2}\right).
\end{equation*}

Even though the above law represents an idealized viscous-plastic material, the viscosities become singular for $\tri$ tending to $0$.
Following \cite{BDHH:22, MK:21}, for $\delta > 0$, we take the regularization $\trid(\eps) := \sqrt{\delta + \tri^2(\eps)}$ into account and define the regularized ice stress by
\begin{equation*}
    \sigd := \frac{1}{e^2} \frac{P}{\trid(\eps)} \eps + \left(1 - \frac{1}{e^2}\right) \frac{P}{2 \trid(\eps)} \tr(\eps) \I_2 - \frac{P}{2}\I_2.
\end{equation*}

We follow \cite{LTW:92a, LTW:92b, LTW:95} for the incompressible, viscous primitive equations of the atmosphere and the ocean as well as \cite{Hib:79} for the sea ice equations.
The complete coupled system is then given by

\begin{equation}
\left\{
	\begin{aligned}
		\dt \vatm - \Delta \vatm 
		+ \bilinu{\uatm}{\vatm}
		+ \nablaH \patm &= \fatm, &&\onOmegaatmT, \\
		\dz \patm &= 0, &&\onOmegaatmT, \\
		\div \uatm &= 0, &&\onOmegaatmT, \\
		\dt \vocn - \Delta \vocn 
		+ \bilinu{\uocn}{\vocn}
		+ \nablaH \pocn &= \focn, &&\onOmegaocnT, \\
		\dz \pocn &= 0, &&\onOmegaocnT, \\
		\div \uocn &= 0, &&\onOmegaocnT, \\
		\dt\vice
		- \frac{1}{m} \divH \sigd + \bilinHice
		&= \frac{1}{m} \tatm + \frac{1}{m} \tocn - g \nablaH H, &&\onGT, \\
		\dt h -\Deltah h + \divH(\vice h) &= \Sh, &&\onGT, \\
		\dt a - \Deltaa a + \divH(\vice a) &= \Sa, &&\onGT, \\
		\trGao \vocn &= \vice, &&\onGT.
	\end{aligned}
\right. 	
	\label{eq:coupled system}
\end{equation}

In the above, $\fatm$ as well as $\focn$ represent external forces, $m = \rice h$ denotes the mass of the sea ice, where $\rice > 0$ is the density, $g$ denotes the gravity and $H \colon G \times [0,T] \to [0,\infty)$ the sea surface dynamic height, $\Sh$ and $\Sa$ are thermodynamic terms discussed in more detail in \autoref{sec:Hibler}, and $\drh$, $\dra > 0$ are constants.
For simplicity, we omit Reynolds numbers and terms associated to the Coriolis force in \eqref{eq:coupled system}, but we emphasize that they can be incorporated easily, see \autoref{cor:possible extenions main result}.

The system is completed by Neumann boundary conditions for the horizontal velocity of the atmospheric wind $\vatm$, Dirichlet boundary conditions for the horizontal velocity of the ocean $\vocn$ on the lower boundary $\Gab$ and Dirichlet boundary conditions for the vertical velocity of the atmospheric wind and the ocean $\watm$ and $\wocn$, respectively.
This can be summarized by
\begin{align}
    \ddnGau \vatm &= 0 \on{\Gau}, \quad \ddnGai \vatm = 0 \on{\Gai}, \quad \trGab \vocn = 0 \on{\Gab}, \text{ as well as}\label{eq:boundary conditions v}\\
    \trGau \watm &= 0 \on{\Gau}, \quad \trGai \watm = 0 \on{\Gai}, \quad \trGao \wocn = 0 \on{\Gao} \qandq \trGab \wocn = 0 \on{\Gab}.\label{eq:boundary conditions w}
\end{align}
On the lateral boundaries, we assume that all variables are periodic, i.\ e., $\vatm$ and $\patm$ periodic on $\Galatm$, $\vocn$ and $\pocn$ periodic on $\Galocn$ and $\vice$, $h$ and $a$ periodic on $\partial G$.
The coupling condition \eqref{eq:coupling} is assumed for the horizontal velocity of the ocean on the interface $\Gao$, and it is already included in \eqref{eq:coupled system}.

We also consider initial values $\vatm(0) = v_{\atm,0}$, $\vocn(0) = v_{\ocn,0}$, $\vice(0) = v_{\ice,0}$, $h(0) = h_0$ and $a(0) = a_0$.
The ground space is given by
\begin{equation*}
    \sX_0 := \rLqatm \times \rLqocn \times \rLq(G)^2 \times \rLq(G) \times \rLq(G).
\end{equation*}
Equipped with a product norm, $\sX_0$ becomes a Banach space.
Moreover, we define
\begin{equation*}
    \begin{aligned}
        \sX_1 := \{ &(\vatm,\vocn,\vice,h,a) \in (\rHsqper{2}(\Omegaatm)^2 \cap \rLqsigmabar(\Omegaatm)) \times \rLqsigmabar(\Omegaocn) \times \rHsqper{2}(G)^2 \times \rHsqper{2}(G) \times \rHsqper{2}(G) :\\
        &\Pocn \Delta \vocn \in \rLqsigmabar(\Omegaocn), \enspace \trGao \vocn = \vice \on{\Gao}, \text{ and \eqref{eq:boundary conditions v} is satisfied}\},
    \end{aligned}
\end{equation*}
where $\Pocn$ denotes the hydrostatic Helmholtz projection associated to the ocean as made precise in \autoref{ssec:hydrostatic Helmholtz and Stokes}, and $\watm$ and $\wocn$ can be recovered from $\vatm$ and $\vocn$, respectively, see also \autoref{ssec:hydrostatic Helmholtz and Stokes}.
For the initial data, we introduce the trace space $\sX_{\gamma} = (\sX_0,\sX_1)_{1-\nicefrac{1}{p},p}$, which is discussed in more detail in \autoref{sec:decoupling}.
For $p$, $q \in (1,\infty)$ with $\nicefrac{1}{p} + \nicefrac{1}{q} < \nicefrac{1}{2}$, we have $u = (\vatm,\vocn,\vice,h,a) \in \sX_{\gamma}$ if and only if 
\begin{equation*}
    \begin{aligned}
        &\vatm \in \rB_{qp,\per}^{2-\nicefrac{2}{p}}(\Omegaatm)^2 \cap \rL_{\sigmabar}^q(\Omegaatm), \enspace \vocn \in \rB_{qp,\per}^{2-\nicefrac{2}{p}}(\Omegaocn)^2 \cap \rL_{\sigmabar}^q(\Omegaocn) \text{ and } (\vice,h,a) \in \rB_{qp,\per}^{2-\nicefrac{2}{p}}(G)^4, \text{ with}\\
        &\ddnGau \vatm = \ddnGai \vatm = 0, \enspace \trGab \vocn = 0 \text{ and } \trGao \vocn = \vice.
    \end{aligned}
\end{equation*}

We introduce an open set $V = \Vatm \times \Vocn \times \Vice \subset \sX_{\gamma}$ such that for $(\vatm,\vocn,\vice,h,a) \in V$, we have
\begin{equation}\label{eq:shape V}
    \kone < h < \ktwo \quad \text{for} \quad 0 < \kone < \ktwo < \infty \quad \qandq a \in (0,1).
\end{equation}
Below, we provide conditions on the initial data and the external forcing terms for the main results.
\begin{asu}\label{ass:initial data}
For $V \subset \sX_{\gamma}$ from \eqref{eq:shape V}, the initial data satisfy $u_0 \in V$, so 
\begin{equation*}
    u_0 = (v_{\atm,0}, v_{\ocn,0}, v_{\ice,0}, h_0, a_0) \in \rB_{qp,\per}^{2-\nicefrac{2}{p}}(\Omegaatm)^2 \cap \rL_{\sigmabar}^q(\Omegaatm) \times \rB_{qp,\per}^{2-\nicefrac{2}{p}}(\Omegaocn)^2 \cap \rL_{\sigmabar}^q(\Omegaocn) \times \rB_{qp,\per}^{2-\nicefrac{2}{p}}(G)^4
\end{equation*}
subject to \eqref{eq:boundary conditions v} and $\trGao \vocn = \vice$, and so that $h_0 \in (\kappa_1,\kappa_2)$ and $a_0 \in (0,1)$.
Besides, for $T > 0$, the external forcing terms fulfill $\fatm \in \rLp(0,T;\rL^q(\Omegaatm)^2)$, $\focn \in \rLp(0,T;\rL^q(\Omegaocn)^2)$ and $\nablaH H \in \rLp(0,T;\rLq(G)^2)$.
\end{asu}
Our first main result on the local existence and uniqueness of a strong solution to \eqref{eq:coupled system} reads as follows.

\begin{thm}\label{thm:main thm}
	Let $p,q \in (1,\infty)$ such that $\nicefrac{1}{p} + \nicefrac{1}{q} < \nicefrac{1}{2}$, and assume that the initial data $u_0$ and the external forcing terms $\fatm$, $\focn$ and $\nablaH H$ satisfy \autoref{ass:initial data}.
	Then there exist $T' = T'(u_0) \in (0,T]$ and $r = r(u_0)$ with $\oB_{\sX_{\gamma}}(u_0,r) \subset V$ such that for for each initial value $u_1 \in \oB_{\sX_{\gamma}}(u_0,r)$, the coupled system~\eqref{eq:coupled system}, subject to the boundary conditions \eqref{eq:boundary conditions v} and \eqref{eq:boundary conditions w}, admits a unique strong solution
	\begin{equation*}
	    u(\cdot,u_1) = (\vatm, \vocn,\vice,h,a)(\cdot,u_1) \in \E_{T'} := \rWsp{1}(0,T';\sX_0) \cap \rLp(0,T';\sX_1).
	\end{equation*}
\end{thm}

We briefly discuss some further properties of the solution from \autoref{thm:main thm}.

\begin{cor}\label{cor:smoothing}
    \begin{enumerate}[(a)]
\item The solution class satisfies $\rWsp{1}(0,T';\sX_0) \cap \rLp(0,T';\sX_1) \hra \rC([0,T'],\sX_{\gamma})$.
\item Given an initial value $u_0 \in V$, there exists $C = C(u_0)$ such that for all $u_1, u_2 \in \oB_{\sX_{\gamma}}(u_0,r)$, it holds that 
$\| u(\cdot,u_1) - u(\cdot,u_2) \|_{\E_{T'}} \le C \| u_1 - u_2 \|_{\sX_\gamma}$.
\item The solution regularizes instantly in time, i.\ e., $t \dk{t}v \in \rWsp{1}(0,T';\sX_0) \cap \rLp(0,T';\sX_1)$. In particular, $u \in \rC^1([b,T'];\sX_\gamma) \cap \rC^{1- \nicefrac{1}{p}}([b,T'];\sX_1)$ 
for every $b \in (0,T')$.
\item The solution $u = u(u_0)$ exists on a maximal time interval $J(u_0) = [0,t_+(u_0))$, and the latter is characterized by the alternatives \\
(i)~global existence, i.\ e., it holds that $t_+(u_0) = \infty$, or \\
(ii)~$\lim_{t \to t_+(u_0)} \mathrm{dist}(u(t),\partial V) = 0$, or \\
(iii)~$\lim_{t \to t_+(u_0)} u(t)$ does not exist in $\sX_\gamma$.
\end{enumerate}
\end{cor}

For $h_*$ and $a_*$ constant in space and time, it readily follows that $(0,0,0,h_*,a_*)$ are equilibria to \eqref{eq:coupled system} provided the external forces $\fatm$, $\focn$ as well as $g \nablaH H$  vanish and the thermodynamic terms $\Sh$ and $\Sa$ are neglected.
Setting $\tatm = 0$, we then verify the normal stability of the above equilibria in the trace space $\sX_\gamma$, resulting in the existence of a unique global strong solution for initial data close to the equilibrium $(0,0,0,h_*,a_*)$.
This leads to the second main result of this article.

\begin{thm}\label{thm:second main thm}
For $h_* \in (\kappa_1,\kappa_2)$ and $a_* \in (0,1)$ constant in space and time, $u_* = (0,0,0,h_*,a_*)$ is a stable equilibrium in $\sX_\gamma$.
Moreover, there exists $r > 0$ such that the unique solution $u$ of \eqref{eq:coupled system} with $\fatm = \focn = g \nablaH H = \tatm = 0$, $\Sh = \Sa = 0$ and $u_0$ fulfilling \autoref{ass:initial data} as well as $\| u_0 - u_* \|_{\sX_\gamma} < r$ exists on $\R_+$ and converges at an exponential rate in $\sX_\gamma$ to some equilibrium $u_\infty$ of \eqref{eq:coupled system} as $t \to \infty$. 
\end{thm}

The following remarks on \autoref{thm:main thm} and \autoref{thm:second main thm} are in order at this point. 

\begin{rem}\label{cor:possible extenions main result} {\rm 
For simplicity of the presentation we did not introduce time weights $\mu \in (\nicefrac{1}{p},1]$ for the solution space to exploit the parabolic regularization.
The corresponding space for the initial data then becomes $\sX_{\gamma,\mu} = (\sX_0,\sX_1)_{\mu-\nicefrac{1}{p},p}$.

Assuming that for $p,q \in (1,\infty)$ such that $\nicefrac{1}{2} + \nicefrac{1}{p} + \nicefrac{1}{q} < \mu \le 1$, the initial data $u_0$ lie in an open set $V_\mu \subset \sX_{\gamma,\mu}$ as 
in \eqref{eq:shape V}, and supposing that the external forcing terms satisfy the above assumptions, we obtain an analogous statement as in \autoref{thm:main thm}, but the solution $u$ is contained 
in the class $u \in \rWsp{1}_\mu(0,T';\sX_0) \cap \rLp_\mu(0,T';\sX_1)$. }
\end{rem}

\begin{rem} {\rm (a) The statements of \autoref{thm:main thm} and \autoref{thm:second main thm} remain valid when including Coriolis terms as in \cite[Section~2]{BDHH:22}.
They are omitted here for simplicity of the presentation. 

(b) Let $\ReHatm$, $\ReHocn$, $\Rezatm$ and $\Rezocn$ denote the horizontal and vertical Reynolds numbers in the context of the atmosphere and the ocean.
Instead of $\Delta \vatm$ and $\Delta \vocn$, one can also consider $\nicefrac{1}{\ReHatm} \DeltaH \vatm + \nicefrac{1}{\Rezatm} \partial_z^2 \vatm$ and 
$\nicefrac{1}{\ReHocn} \DeltaH \vocn + \nicefrac{1}{\Rezocn} \partial_z^2 \vocn$ in \eqref{eq:coupled system}.
The statements of \autoref{thm:main thm} and \autoref{thm:second main thm} remain valid in this case.}
 \end{rem}

\section{Rewriting the Coupled System as a Quasilinear Evolution Equation}\label{sec:coupled system as qee}

The first step in the proof of \autoref{thm:main thm} is to write the coupled system of PDEs \eqref{eq:coupled system} as a quasilinear evolution equation in the ground space $\sX_0$.

\subsection{The hydrostatic Helmholtz projection and the hydrostatic Stokes operator}\label{ssec:hydrostatic Helmholtz and Stokes}

\

For $a,b \in \R$ with $a < b$, we consider the cylindrical domain $\Omega = G \times (a,b)$. 
The incompressible, isothermal primitive equations for the velocity $u = (v,w)$ of the fluid and the surface pressure $\pi$ are given by
\begin{equation}
	\left\{
	\begin{aligned}
		\dt v - \Delta v + \bilinu{u}{v} + \nablaH \pi &= f, &&\onOmegaT, \\
		\dz  \pi &= 0, &&\onOmegaT, \\
		\div u &= 0, &&\onOmegaT, \\
		v(0) &= u_0, &&\onOmega . 
	\end{aligned}
	\right.
	\label{eq:primitive equation general}
\end{equation} 
Here $f$ is an external force term, and $v = (v_1,v_2)$ denotes the horizontal velocity, whereas $w$ represents the vertical velocity. 
In the sequel, the remaining surface pressure will be denoted by $\pi_s$.
On the upper and bottom parts of the boundary $\partial \Omega$, $v$ satisfies Dirichlet or Neumann conditions conditions on $\GaD$ and $\GaN$, respectively, while $w$ is subject to Dirichlet boundary conditions.
Both, $v$ and $w$, as well as the pressure $\pi_s$ are assumed to be periodic on the lateral boundary $\Gal$.

In conjunction with the boundary conditions $w = 0$ on $G \times \{a,b\}$, the incompressibility condition $\div u = 0$ yields that the vertical component $w$ can be expressed by $w(\cdot,\cdot,z) = -\int_a^z \divH v(\cdot,\cdot,\xi) \d \xi$, $z \in [a,b]$.

Denoting by $\vbar$ the vertical average of $v$, the fracturing part is given by $\vtilde := v - \vbar$.
The vertical average commutes with horizontal or tangential derivation and with the horizontal Laplacian for sufficiently smooth vector fields $v$, i.\ e., $\bar{\nablaH v} = \nablaH \vbar$ and $\bar{\DeltaH v} = \DeltaH \vbar$.
In particular, the incompressibility condition can be expressed as $\divH \vbar = 0$.

We now introduce the associated hydrostatic Helmholtz or Leray projection $\P \colon \rLq(\Omega)^2 \to \rLqOmegasigmabar$.
It is a bounded, linear projection which annihilates the pressure term, i.\ e., $\P (\nablaH \pi) = 0$, and the space $\rLq(\Omega)$ can be decomposed into $\rLq(\Omega)^2 = \rLqOmegasigmabar \oplus \{ \nablaH \pi \colon \pi \in \widehat{\rW}^{1,q}(G) \}$ along the hydrostatic Helmholtz projection $\P$. 
Furthermore, the Helmholtz projection commutes with the time derivative, i.\ e., $\dt \P = \P \dt$.
It is related to the two-dimensional classical Helmholtz projection~$\PH$  by
\begin{equation*}
	\P \colon \rLq(\Omega)^2 \to \rLqOmegasigmabar, \quad \sP v = \PH \vbar + \vtilde .
\end{equation*}
We define the maximal hydrostatic Stokes operator $\Am \colon D(\Am) \subset \rLqOmegasigmabar \to \rLqOmegasigmabar$ by $\Am v := \P \Delta v$, where $D(\Am) := \{v \in \rLqOmegasigmabar \colon \Am v \in \rLqOmegasigmabar\}$.
The hydrostatic Stokes operator with boundary conditions $\Abc \colon D(\Abc) \subset \rLqOmegasigmabar \to \rLqOmegasigmabar$ is defined by $\Abc v := \P \Delta v$, with $D(\Abc) := \rHsqbcper{2}(\Omega)^2 \cap \rLqOmegasigmabar$.

For $s \in (0,2]$ and $\GaD$ as well as $\GaN$ representing the part of the boundary with Dirichlet or Neumann boundary conditions, respectively, we set
\begin{equation*}
	\rHsqbcper{s}(\Omega)
	:= 
	\left\{
	\begin{aligned}
		&\left\{ v \in \rHsqper{s}(\Omega)^2 \colon v|_{\GaD} = 0, \, \dz v|_{\GaN} = 0 \right\}, &&\for 1 + \nicefrac{1}{q} < s \leq 2, \\
		&\left\{ v \in \rHsqper{s}(\Omega)^2 \colon v|_{\GaD} = 0 \right\}, &&\for \nicefrac{1}{q} < s < 1 + \nicefrac{1}{q}, \\
		&\rHsqper{s}(\Omega)^2,
		&&\for 0 < s < \nicefrac{1}{q}.
	\end{aligned}
	\right.
\end{equation*}
Note that $D(\Am) \subset \rHsqbcper{s}(\Omega)$ only for $s < \nicefrac{1}{q}$ but not for $s > \nicefrac{1}{q}$.
As indicated above, $w$ can be recovered from $v$, so the bilinearity can be written as $\bilinu{u}{v} = \bilin{v}{v}$.
Applying the hydrostatic Helmholtz projection to the bilinearity, we obtain
\begin{equation}
	\rF(v,v') := \P(\bilin{v}{v'})
	= \P(\bilinH{v}{v'}) + \P(w(v)\dz v'), \text{ with } \rF(v) := \rF(v,v).
	\label{eq:bilinearity primitive eq}
\end{equation}

\subsection{Hibler's sea ice operator}
\label{ssec:Hibler operator}

\

Proceeding as in \cite[Section~3]{BDHH:22}, we introduce the map $\S \colon \R^{2 \times 2} \to \R^{2 \times 2}$ with
\begin{equation*}
    \S \eps = \begin{pmatrix}
        (1 + \frac{1}{e^2}) \eps_{11} + (1 - \frac{1}{e^2}) \eps_{22} & \frac{1}{e^2}(\eps_{12} + \eps_{21})\\
        \frac{1}{e^2}(\eps_{12} + \eps_{21}) & (1 - \frac{1}{e^2}) \eps_{11} + (1 + \frac{1}{e^2}) \eps_{22}
    \end{pmatrix}.
\end{equation*}
Identifying $\eps \in \R^{2 \times 2}$ with the vector $(\eps_{11},\eps_{12},\eps_{21},\eps_{22})^\top$, we find that the map $\mathbb{S}$ corresponds to the positive semi-definite matrix
\begin{equation*}
    \mathbb{S} = \left(\mathbb{S}_{ij}^{kl}\right) = \begin{pmatrix}
    1 + \frac{1}{e^2} & 0 & 0 & 1 - \frac{1}{e^2}\\
    0 & \frac{1}{e^2} & \frac{1}{e^2} & 0\\
    0 & \frac{1}{e^2} & \frac{1}{e^2} & 0\\
    1 - \frac{1}{e^2} & 0 & 0 & 1 + \frac{1}{e^2}
    \end{pmatrix}.
\end{equation*}
Setting $\Sd = \Sd(\eps,P) := \frac{P}{2} \frac{\S \eps}{\trid(\eps)}$, we define Hibler's operator by $\sAH \vice := \frac{1}{\rice h}\divH \Sd(\vice)$ and recall
\begin{equation*}
    (\sAH \vice)_i = -\sum_{j,k,l=1}^2 \frac{P}{2 \rice h} \frac{1}{\trid(\eps)} \left(\S_{ij}^{kl} - \frac{1}{\trid^2(\eps)}(\S \eps)_{ik} (\S \eps)_{jl}\right) \D_k \D_l v_{\ice,j} + \frac{1}{2 \rice h \trid(\eps)} \sum_{j=1}^2 (\dk{j}P)(\S \eps)_{ij}
\end{equation*}
from \cite[Section~3]{BDHH:22}, where $i=1,2$ and $\D_m = - \i \dk{m}$.
The principal part of $\sAH$ has coefficients given by
\begin{equation*}
    a_{ij}^{kl}(\eps,h,a) := -\frac{P(h,a)}{2 \rice h} \frac{1}{\trid(\eps)} \left(\S_{ij}^{kl} - \frac{1}{\trid^2(\eps)}(\S \eps)_{ik} (\S \eps)_{jl}\right).
\end{equation*}
For sufficiently smooth initial data $v_0 = (v_{\ice,0},h_0,a_0)$, the linearization of Hibler's operator is of the form
\begin{equation*}
    [\sAH(v_0)\vice]_i = \sum_{j,k,l=1}^2 a_{ij}^{kl}(\eps_0,h_0,a_0) \D_k \D_l v_{\ice,j} + \frac{1}{2 \rice h_0 \trid(\eps_0)} \sum_{j=1}^2 (\dk{j}P(h_0,a_0))(\S \eps(\vice))_{ij}.
\end{equation*}

The $\rLq$-realization $\AH(v_0)$ of $\sAH(v_0)$ on $G = (0,1) \times (0,1)$ is then defined by $[\AH(v_0)] \vice := [\sAH(v_0)]\vice$ for $\vice \in D(\AH(v_0)) = \rHsqper{2}(G)^2$.

Comparing this definition of Hibler's operator to the one presented in \cite[Section~3]{BDHH:22}, we observe that the sign is changed and $\nicefrac{1}{\rice h}$ is included in the definition for the sake of consistency with the hydrostatic Stokes operator and to ease notation.
We also introduce the lower-order terms
\begin{equation*}
    \Bh(h_0,a_0) h := - \frac{\dk{h}P(h_0,a_0)}{2 \rice h_0} \nablaH h \quad \andtext \quad  \Ba(h_0,a_0) a := - \frac{\dk{a}P(h_0,a_0)}{2 \rice h_0} \nablaH a,
\end{equation*}
originating from $\frac{1}{m}\divH \frac{P}{2}\I$.
Finally, we recall that $\DeltaH$ is the horizontal Laplacian on the square $G$ with periodic boundary conditions, and its domain is given by $D(\DeltaH) = \rHsqper{2}(G)$.

\subsection{A new formulation of the coupled system}
\label{ssec:new formulation}

\
 
We start the reformulation of \eqref{eq:coupled system} with the primitive equations of the atmosphere. 
To this end, we choose $a = \ktwo$ and $b = \hatm$ and denote the atmospheric solenoidal $\rLq$-space by $\rLqatm$.
The atmospheric Helmholtz projection $\Patm$ and the atmospheric Stokes operator $\Aatm \colon D(\Aatm) \subset \rLqatm \to \rLqatm$ are defined as in \autoref{ssec:hydrostatic Helmholtz and Stokes}, i.\ e.,
\begin{equation*}
    \left\{
    \begin{aligned}
        \Aatm \vatm &= \Patm \Delta \vatm,\\
        D(\Aatm) &= \left\{\vatm \in \rHsqper{2}(\Omegaatm)^2 \cap \rLqsigmabar(\Omegaatm) : \ddnGau \vatm = 0 \on{\Gau} \andtext \ddnGai \vatm = 0 \on{\Gai}\right\}
    \end{aligned}
    \right.
\end{equation*}
in view of \eqref{eq:boundary conditions v}, and the bilinearity $\Fatm$ is defined via \eqref{eq:bilinearity primitive eq}. 

For the primitive equations of the ocean, we choose $a = -\hocn$ and $b = 0$. 
The oceanic solenoidal $\rLq$-space is denoted by $\rLqocn$, and we define the oceanic Helmholtz projection $\Pocn$ as well as the maximal oceanic Stokes operator $\Amocn \colon D(\Amocn) \subset \rLqocn \to \rLqocn$ as in \autoref{ssec:hydrostatic Helmholtz and Stokes}, so
\begin{equation*}
    \Amocn \vocn = \Pocn \Delta \vocn, \enspace D(\Amocn) = \{\vocn \in \rLqsigmabar(\Omegaocn) : \Pocn \Delta \vocn \in \rLqsigmabar(\Omegaocn)\},
\end{equation*}
and the bilinearity $\Focn$ via \eqref{eq:bilinearity primitive eq}. 
The oceanic hydrostatic Stokes operator with homogeneous boundary conditions $\An \colon D(\An) \subset \rLqocn \to \rLqocn$ is given by\begin{equation}\label{eq:boundary conditions A0}
    \left\{
    \begin{aligned}
    \An \vocn &= \Pocn \Delta \vocn,\\
	D(\An) &= \left\{\vocn \in \rHsqper{2}(\Omegaocn)^2 \cap \rLqsigmabar(\Omegaocn) : \trGab \vocn = 0 \on{\Gab} \andtext \trGao \vocn = 0 \on{\Gao}\right\}
	\end{aligned}
    \right.
\end{equation}
with regard to \eqref{eq:boundary conditions v} and \eqref{eq:coupling}.
Setting $\Coi(h) := \nicefrac{\muocn}{\rice h}$ so that $\nicefrac{1}{\rice h} \tocn = -\Coi(h) \ddnGao \vocn$, for $(v_{\ice,0},h_0,a_0) \in \Vice$, we define $\sA(v_{\ice,0},a_0,h_0) \colon \sX_1 \subset \sX_0 \to \sX_0$ by
\begin{equation}
	\begin{aligned}
		\sA(v_{\ice,0},h_0,a_0) 
		&:= 
		\begin{pmatrix}
			\Aatm & 0 & 0 & 0 & 0 \\
			0 & \Amocn & 0 & 0 & 0 \\
			0 & -\Coi(h_0) \ddnGao & \AH(v_{\ice,0},h_0,a_0) & \Bh(h_0,a_0) & \Ba(h_0,a_0) \\
			0 & 0 & 0 & \Deltah & 0 \\
			0 & 0 & 0 & 0 & \Deltaa 
		\end{pmatrix},
		\\
		\sX_1 := D(\sA) &:= 
		\{ (\vatm,\vocn,\vice,h,a) \in D(\Aatm) \times D(\Amocn) \times D(\AH) \times D(\DeltaH) \times D(\DeltaH) \colon \\
		&\qquad\trGab \vocn = 0 \on{\Gab} \andtext \trGao \vocn = \vice \on{\Gao}\}.
	\end{aligned}
	\label{eq:operator matrix}
\end{equation}
Note that this operator matrix has {\em non-diagonal domain}. 
Further, we define the nonlinearity by
\begin{equation}
		\sF(\vatm,\vocn,\vice,h,a) := 
		\begin{pmatrix}
			\Patm(\bilinatm)  \\
			\Pocn(\bilinocn)  \\
			\bilinHice - \frac{1}{\rice h}\tatm\\
			\divH(\vice h) - \Sh \\
			\divH(\vice a) - \Sa
		\end{pmatrix},
	\label{eq:nonlinearity}
\end{equation}
with $\Sh$ and $\Sa$ as in \eqref{eq:thermodynamic terms}. 
Besides, the external force is given by $f := (\Patm\fatm,\Pocn\focn,- g \nablaH H,0,0)^\top$.

The coupled system \eqref{eq:coupled system} subject to the boundary conditions \eqref{eq:boundary conditions v} and \eqref{eq:boundary conditions w} can now be reformulated as an abstract quasilinear Cauchy problem
\begin{equation}
	\left\{
	\begin{aligned} 
		\dt u - \sA(u) u + \sF(u) &= f, &&t \in [0,T], \\
		u(0) &= u_0,
	\end{aligned}
	\right. 
	\label{eq:quasilinear Cauchy problem} 
\end{equation}
on the ground space $\sX_0$.

\section{Hibler's Sea Ice Equations on the Square with Periodic Boundary Conditions}
\label{sec:Hibler}

In the first part of this section, we show that Hibler's operator admits a bounded $\Hinfty$-calculus, while the second part is concerned with suitable Lipschitz estimates.

\subsection{Bounded $\Hinfty$-calculus of Hibler's operator}
\label{ssec:MR Hibler}

\

The embedding below is classical, see e.\ g.\ \cite[Theorem~4.6.1]{Tri:78}.   Let $p,q \in (1,\infty)$ with $\nicefrac{1}{p} + \nicefrac{1}{q} < \nicefrac{1}{2}$. Then 
\begin{equation}\label{lem:embedding besov space}
\rBsqp{2-\nicefrac{2}{p}}(G) \hra \rC^1(\overline{G}).
\end{equation}
Making use of \eqref{lem:embedding besov space} and proceeding as in \cite[Section~4]{BDHH:22}, we obtain the ellipticity of $-\sAH(v_0)$.
For the notion of strong ellipticity and parameter ellipticity, we refer e.\ g.\ to \cite[Definition~5.1]{DHP:03}.

\begin{lem}\label{lem:parameter-ellipticity}
    Let $p,q \in (1,\infty)$ such that $\nicefrac{1}{p} + \nicefrac{1}{q} < \nicefrac{1}{2}$, and let $u_0 = (v_{\ice,0},h_0,a_0) \in \Vice$.
    Then the principal part of the negative Hibler operator $-\sAH(v_0)$ is strongly elliptic and also parameter-elliptic of angle $\phi_{-\sAH(v_0)} = 0$.
\end{lem}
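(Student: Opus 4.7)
The plan is to reduce the claim to a pointwise algebraic verification on the principal symbol of $-\sAH(v_0)$, following the strategy of \cite[Section~4]{BDHH:22}. The first step is to verify that the coefficients $a_{ij}^{kl}(\eps(u_{\ice,0}),h_0,a_0)$ from \eqref{eq:coefficients Hibler operator} are continuous bounded functions on $\R^2$. Since $v_0 \in \Vice$ lies in the sea ice factor of the trace space $\sX_\gamma$, the components of $u_{\ice,0}$ as well as $h_0$ and $a_0$ are all contained in $\rBsqp{2-\nicefrac{2}{p}}(\R^2)$, and \autoref{lem:embedding besov space} provides the embedding of this space into $\mathscr{B}^1(\R^2)$. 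Consequently $h_0$, $a_0$ and $\eps(u_{\ice,0})$ are bounded and continuous on $\R^2$, and together with the lower bound $h_0 \geq \kone > 0$ from \eqref{eq:shape V} and with $\trid(\eps) \geq \sqrt{\delta}$ built into the regularization, this places the coefficients in the continuous-coefficient framework of \cite[Section~5]{DHP:03}.

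The second step is to compute the principal symbol of $-\sAH(v_0)$ at $(x,\xi)$ as the $2 \times 2$ matrix
\begin{equation*}
    \bigl(-\sAH(v_0)\bigr)_{\#}(x,\xi)_{ij} = \frac{P(h_0,a_0)}{2\rice h_0 \trid(\eps_0)}\left(\sum_{k,l=1}^{2}\S_{ij}^{kl}\xi_k\xi_l - \frac{1}{\trid^2(\eps_0)}(\S\eps_0\xi)_i(\S\eps_0\xi)_j\right),
\end{equation*}
with $\eps_0 = \eps(u_{\ice,0})(x)$, and to show that the associated quadratic form on $\C^2$ is bounded below by $c|\xi|^2|\eta|^2$ for a constant $c > 0$ depending only on the uniform bounds on $h_0$, $a_0$, $\eps_0$ and on the regularization parameter $\delta$. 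A direct expansion in terms of $\xi_i\eta_j$ shows that the first summand $\sum_{ijkl}\S_{ij}^{kl}\xi_k\xi_l\eta_i\eta_j$ is bounded below by $(1-e^{-2})|\xi|^2|\eta|^2$, obtained after absorbing the mixed term $(4/e^2)\xi_1\xi_2\eta_1\eta_2$ via Young's inequality. The rank-one correction is then dominated using the identity $\S\eps_0 : \eps_0 = \tri^2(\eps_0) = \trid^2(\eps_0) - \delta$, built into the elliptic yield-curve formulation, together with an algebraic estimate exploiting the full symmetries and positive semidefiniteness of $\S$ as a bilinear form on $2\times 2$ matrices. The $\delta$-deficit in the yield-curve identity is what leaves a strictly positive remainder in the quadratic form and produces the ellipticity constant.

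Finally, because the symbol matrix is real and symmetric at every $(x,\xi)$, strong ellipticity forces its spectrum to be a compact subset of $(0,\infty)$ for $|\xi|=1$, which by \cite[Definition~5.1]{DHP:03} is precisely parameter ellipticity of angle $\phi_{-\sAH(v_0)}=0$. The main obstacle in the program is the algebraic estimate controlling the rank-one correction, since the factor $1/\trid^2$ barely leaves room in the quadratic form. However, ellipticity is a pointwise algebraic condition on the coefficients and is insensitive to the underlying domain or boundary conditions, so the computation carried out in \cite[Section~4]{BDHH:22} for bounded $\rC^2$-domains with Dirichlet data transfers verbatim to the present whole-space setting once the coefficient regularity has been established via \autoref{lem:embedding besov space}.
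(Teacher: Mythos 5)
Your proposal is correct and follows essentially the same route as the paper: both establish boundedness, continuity and a positive lower bound for the coefficients via \autoref{lem:embedding besov space} together with $h_0 \geq \kone$ and $\trid \geq \sqrt{\delta}$, then defer the pointwise algebraic lower bound on the principal symbol to the computation in \cite[Proposition~4.1]{BDHH:22} (which, as you note, is domain-independent), and finally deduce parameter-ellipticity of angle $0$ from the symmetry of the symbol. Your extra detail on the yield-curve identity $\S\eps_0 : \eps_0 = \tri^2(\eps_0)$ and the role of the $\delta$-deficit is a faithful elaboration of that cited computation rather than a different argument.
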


We will now derive the bounded $\Hinfty$-calculus for the linearized $\rLq$-realization $-\AH(v_0)$.

\begin{prop}\label{prop:MR and Hinfty AH}
	Let $p,q,r,s \in (1,\infty)$ such that $\nicefrac{1}{p} + \nicefrac{1}{q} < \nicefrac{1}{2}$, let $v_0 = (v_{\ice,0},h_0,a_0) \in \Vice$, and let $\AH(v_0)$ denote the $\rLq$-realization of the linearized Hibler operator.
	Then there is $\omega_0 \in \R$ such that for all $\omega > \omega_0$, $-\AH(v_0) + \omega$ admits a bounded $\Hinfty$-calculus on $\rL^r(G)^2$ with angle $\Phi_{-\AH(v_0) + \omega}^\infty < \nicefrac{\pi}{2}$.
\end{prop}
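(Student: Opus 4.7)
The plan is to reduce the statement to an application of the $\Hinfty$-calculus theory for second-order elliptic operators on $\R^n$ with bounded, uniformly continuous coefficients, together with a lower-order perturbation argument. Since bounded $\Hinfty$-calculus of angle strictly less than $\nicefrac{\pi}{2}$ on a UMD space implies maximal regularity, it suffices to establish (b); then (a) will follow automatically on $\rL^r(\R^2)^2$ (which is UMD) via \cite{KW:04, DHP:03}.

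First I would verify the coefficient regularity. Since $v_0 = (u_{\ice,0},h_0,a_0)$ lies in the trace space $\sYgammai = (\sYzeroi,\sYonei)_{1-\nicefrac{1}{p},p}$ and thus in $\rBsqp{2-\nicefrac{2}{p}}(\R^2)$ componentwise, \autoref{lem:embedding besov space} gives $u_{\ice,0}, h_0, a_0 \in \mathscr{B}^1(\R^2)$. Consequently $\eps(u_{\ice,0})$, $\nablaH h_0$ and $\nablaH a_0$ are all bounded and uniformly continuous on $\R^2$. The constraint $v_0 \in V^\ice$ ensures $h_0 \ge \kone > 0$ pointwise, so $\nicefrac{1}{h_0}$ is bounded. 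Combining these observations with $\trid(\eps) \ge \sqrt{\delta} > 0$ and the smooth dependence of $P(h_0,a_0) = p^* h_0 \exp(-c(1-a_0))$ on its arguments, one checks directly from \eqref{eq:coefficients Hibler operator} and \eqref{eq:linearized Hibler operator} that both the principal-part coefficients $a_{ij}^{kl}$ and the first-order coefficients of $\sAH(v_0)$ are bounded and uniformly continuous on $\R^2$.

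Next I would handle the principal part. By \autoref{lem:parameter-ellipticity}, $-\sAH(v_0)$ is parameter-elliptic with angle $\phi = 0$. Denoting by $-\AHq_{\#}(v_0)$ the $\rLq$-realization of the principal part, the main theorem on $\Hinfty$-calculus for homogeneous parameter-elliptic operators on $\R^n$ with bounded uniformly continuous top-order coefficients (see \cite[Theorem~5.5 and Theorem~6.1]{DHP:03}) yields the existence of $\omega_0' \in \R$ such that, for every $\omega > \omega_0'$, $-\AHq_{\#}(v_0) + \omega$ admits a bounded $\Hinfty$-calculus on $\rL^r(\R^2)^2$ with $\Hinfty$-angle $\Phi^\infty_{-\AHq_{\#}(v_0)+\omega} = 0 < \nicefrac{\pi}{2}$.

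Finally I would incorporate the first-order perturbation. The remaining part of $\sAH(v_0)$ is a first-order differential operator with bounded coefficients, hence relatively bounded with respect to $(-\AHq_{\#}(v_0) + \omega)^{\nicefrac{1}{2}+\varepsilon}$ for small $\varepsilon>0$ (Kato's inequality / moment inequality), with relative bound that can be made arbitrarily small by increasing $\omega$. A standard perturbation result for the $\Hinfty$-calculus under lower-order relatively small perturbations (see \cite[Chap.~14]{KW:04} or \cite[Prop.~3.3.9]{PS:16}) then yields an $\omega_0 \ge \omega_0'$ such that, for every $\omega > \omega_0$, the full shifted operator $-\AH(v_0) + \omega$ still admits a bounded $\Hinfty$-calculus on $\rL^r(\R^2)^2$ with angle strictly below $\nicefrac{\pi}{2}$, proving (b); assertion (a) is then immediate from \cite{KW:04} since $\rL^r(\R^2)^2$ is UMD.

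The main obstacle I anticipate is purely bookkeeping: checking carefully that the full coefficient structure arising from the nonlinear combination $\tfrac{P(h_0,a_0)}{\rice h_0 \trid(\eps(u_{\ice,0}))^k}$ really falls under the regularity hypotheses of \cite{DHP:03}, and that the perturbation step can indeed absorb the first-order term generated by $\nablaH P(h_0,a_0)$, which itself requires the sharp embedding $\rBsqp{2-\nicefrac{2}{p}}(\R^2) \hookrightarrow \mathscr{B}^1(\R^2)$ from \autoref{lem:embedding besov space} — precisely the reason for the hypothesis $\nicefrac{1}{p} + \nicefrac{1}{q} < \nicefrac{1}{2}$.
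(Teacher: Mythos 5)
Your proposal is correct, but it organizes the argument differently from the paper. The paper treats (a) and (b) as two separate direct applications: maximal regularity for the full variable-coefficient operator (lower-order terms included) comes straight from \cite[Theorem~5.7]{DHP:03}, and the bounded $\Hinfty$-calculus comes from \cite[Theorem~6.1]{DS:97}, which is precisely the result that handles parameter-elliptic operators with continuous bounded top-order coefficients \emph{and} merely bounded lower-order coefficients in one stroke --- so no perturbation step is needed at all. You instead prove (b) first for the principal part and then absorb the first-order term by a relative-boundedness perturbation (\`a la \cite[Proposition~13.1]{KW:04}), finally deducing (a) from (b) via UMD/property~$(\alpha)$. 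Both routes work; yours is more modular and makes explicit where the identification $D((-\AHq_{\#}(v_0)+\omega)^{\nicefrac{1}{2}})\simeq \rH^{1,r}(\R^2)^2$ enters, while the paper's is shorter because the cited theorems already carry the lower-order terms. Two small points to tighten: first, the $\Hinfty$-calculus for the \emph{variable-coefficient} principal part is not really a theorem of \cite{DHP:03} (whose variable-coefficient results concern $\sR$-sectoriality/maximal regularity); you should cite \cite[Theorem~6.1]{DS:97} for that step, exactly as the paper does. Second, your claim that the top-order coefficients are \emph{uniformly} continuous does hold, but it requires going through the intermediate H\"older-type embedding $\rBsqp{2-\nicefrac{2}{p}}(\R^2)\hra\rBrpq{1+\delta}{\infty}{p}(\R^2)$ in the proof of \autoref{lem:embedding besov space}, not just the conclusion $\mathscr{B}^1(\R^2)$ (boundedness of the first derivatives alone does not give their uniform continuity); it is worth saying this explicitly if your perturbation framework insists on BUC coefficients.
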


\begin{proof}
    Identifying functions on the square $G = (0,1) \times (0,1)$ endowed with periodic boundary conditions with functions on the torus $\mathbb{T}$, exploiting 
\autoref{lem:parameter-ellipticity} for the ellipticity properties of the negative Hibler operator $-\sAH(v_0)$ and using \eqref{lem:embedding besov space} in conjunction with the fact 
that the coefficients of $\sAH(v_0)$ depend smoothly on $\nablaH v_{\ice,0}$, $\nablaH h_0$, $\nablaH a_0$, $h_0$ and $a_0$, we conclude from \cite[Theorem~7.1]{DS:97} that there is $\omega_0 \in \R$ such that for all $\omega > \omega_0$, the operator $-\AH(v_0) + \omega$ admits a bounded $\Hinfty$-calculus on $\rL^r(\mathbb{T})^2 = \rL^r(G)^2$ with angle $\Phi_{-\AH(v_0) + \omega}^\infty < \nicefrac{\pi}{2}$.
\end{proof}

The following result is a direct consequence of \autoref{prop:MR and Hinfty AH}, because the latter proposition implies that $-\AH(v_0) + \omega$ has bounded imaginary powers on $\rL^q(G)^2$, see also the relation \eqref{eq:rel Hinfty BIP RS}. 

\begin{cor}\label{cor:Bh Ba relatively AH-bd}
The linearized operators $\Bh$ and $\Ba$ are relatively $(-\AH+\omega)^{\frac{1}{2}}$-bounded.
\end{cor}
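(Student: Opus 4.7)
The plan is to combine part (f) of the preceding \autoref{cor:functional analytic properties AH}, which provides boundedness of the Riesz transform $\nablaH(-\AH(v_0)+\omega)^{-\nicefrac{1}{2}}$, with a uniform $\rL^\infty$-bound on the multiplicative coefficients appearing in $\Bh$ and $\Ba$. Writing the two operators as
\begin{equation*}
    \Bh(h_0,a_0) h = c_h \, \nablaH h \qandq \Ba(h_0,a_0) a = c_a \, \nablaH a,
\end{equation*}
with $c_h := -\dk{h}P(h_0,a_0)/(2\rice h_0)$ and $c_a := -\dk{a}P(h_0,a_0)/(2\rice h_0)$, I first verify that $c_h, c_a \in \rL^\infty(\R^2)$. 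This is immediate from the explicit form $P(h_0,a_0) = p^* h_0 \exp(-c(1-a_0))$, whose partial derivatives depend smoothly on $h_0$ and $a_0$, from the lower bound $h_0 \ge \kone > 0$ on $V$, and from the fact that, by the embedding $\rBsqp{2-\nicefrac{2}{p}}(\R^2) \hra \mathscr{B}^1(\R^2)$ of the earlier \autoref{lem:embedding besov space}, the initial data $h_0$ and $a_0$ belong to $\mathscr{B}^1(\R^2)$ and are in particular bounded and continuous.

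For the relative bound, I then estimate, for any $h$ in the relevant fractional power domain,
\begin{equation*}
    \|\Bh(h_0,a_0) h\|_{\rLq(\R^2)^2}
    \le \|c_h\|_{\rL^\infty} \, \|\nablaH h\|_{\rLq(\R^2)^2}
    \le \|c_h\|_{\rL^\infty} \, \|\nablaH (-\AH(v_0)+\omega)^{-\nicefrac{1}{2}}\|_{\sL(\rLq)} \, \|(-\AH(v_0)+\omega)^{\nicefrac{1}{2}} h\|_{\rLq},
\end{equation*}
where the last factor is finite and the middle factor is bounded by \autoref{cor:functional analytic properties AH}(f). The identical estimate with $c_a$ in place of $c_h$ yields the bound for $\Ba$. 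Together these give the claimed relative $(-\AH(v_0)+\omega)^{\nicefrac{1}{2}}$-boundedness.

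The only point that needs a brief comment is a type-matching issue: $\AH$ is defined on vector-valued functions in $\rLq(\R^2)^2$ while $h$ and $a$ are scalars. This is resolved componentwise, since the Riesz-transform identity from \autoref{cor:functional analytic properties AH}(f) rests on the strong ellipticity and bounded $\Hinfty$-calculus of the principal part of $\AH$, both of which apply equally well to the associated scalar problem and yield the same mapping property $\nablaH(-\AH(v_0)+\omega)^{-\nicefrac{1}{2}}\in \sL(\rLq(\R^2),\rLq(\R^2)^2)$. No additional obstacle arises, since $\Bh$ and $\Ba$ are genuinely first-order operators with $\rL^\infty$-coefficients and $-\AH(v_0)$ is second-order of parameter-elliptic type with angle zero.
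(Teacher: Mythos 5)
Your argument is correct and is essentially the paper's own proof: the paper disposes of this corollary with the single remark that it is a direct consequence of \autoref{cor:functional analytic properties AH}(f), and you have supplied exactly the omitted details, namely the $\rL^\infty$-bound on the coefficients $c_h,c_a$ (using $h_0\ge\kone$ and \autoref{lem:embedding besov space}) followed by factoring $\nablaH$ through $(-\AH(v_0)+\omega)^{\nicefrac{1}{2}}$ via the Riesz transform. The scalar-versus-vector mismatch you flag is present in the paper's formulation as well and is immaterial; your resolution is fine, though it can be stated more simply by noting that all that is used is $D((-\AH(v_0)+\omega)^{\nicefrac{1}{2}})\simeq\rH^{1,q}$ from \autoref{cor:functional analytic properties AH}(d), so that the claim reduces to the boundedness of a first-order operator with $\rL^\infty$-coefficients from $\rH^{1,q}(\R^2)$ to $\rLq(\R^2)^2$.
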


\subsection{Estimates of the nonlinearities}
\label{ssec:nonlinearity sea ice}

\

We now show that the nonlinear terms in the sea ice equations satisfy certain Lipschitz estimates.
To this end, we set
\begin{equation*}
    \Aice(v_0) = \begin{pmatrix}
        0 & -\Coi(h_0) \ddnGao & \AH(v_0) & \Bh(h_0,a_0) & \Ba(h_0,a_0)\\
        0 & 0 & 0 & \Deltah & 0\\
        0 & 0 & 0 & 0 & \Deltaa
    \end{pmatrix}
\end{equation*}
as well as 
\begin{equation*}
    \Fice(v) = \begin{pmatrix}
        \bilinHice - \frac{1}{\rice h}\tatm(\vatm)\\
			\divH(\vice h) - \Sh\\
			\divH(\vice a) - \Sa
    \end{pmatrix}.
\end{equation*}

Further, with regard to \autoref{prop:MR and Hinfty AH}, it is natural to introduce
\begin{equation*}
    \Aiceom(v_0) = \begin{pmatrix}
        0 & -\Coi(h_0) \ddnGao & \AH(v_0) - \omega & \Bh(h_0,a_0) & \Ba(h_0,a_0) \\
        0 & 0 & 0 & \Deltah & 0\\
        0 & 0 & 0 & 0 & \Deltaa
    \end{pmatrix}
\end{equation*}
and $\Ficeom(v) = \Fice(v) + (\omega \vice,0,0)^\top$.
The thermodynamic terms $\Sh$ and $\Sa$ are defined by
\begin{equation}\label{eq:thermodynamic terms}
    \Sh = f_1(\nicefrac{h}{a})a + (1-a) f_1(0) \qandq \Sa = \begin{cases*} \frac{f_1(0)}{\kone}(1-a),& if $f_1(0) > 0$, \\ 0, \quad & if $f_1(0) < 0$, 
    \end{cases*}
    \quad + \quad \begin{cases*} 0,& if $\Sh > 0$, \\ \frac{a}{2 h}\Sh, & if $\Sh < 0$,
    \end{cases*}
\end{equation}
where $f_1 \in \rC_b^1([0,\infty);\R)$ is an arbitrary function representing the ice growth rate, see e.\ g.\ the one considered by Hibler \cite{Hib:79}.
Moreover, for $i \in \{0,1,\theta,\gamma,\beta\}$, we introduce $\sY_i = \sY_i^\atm \times \sY_i^\ocn \times \sY_i^\ice$ given by
\begin{equation}
    \left\{
    \begin{aligned}
        \sYzero &= \rLqatm \times \rLqocn \times \rLq(G)^4,\\
        \sYone &= \rHsqper{2}(\Omegaatm)^2 \cap \rLqatm \times \rHsqper{2}(\Omegaocn)^2 \cap \rLqocn \times \rHsqper{2}(G)^4,\\
        \sYtheta &= \rBsqpper{2\theta}(\Omegaatm)^2 \cap \rLqatm \times \rBsqpper{2\theta}(\Omegaocn)^2 \cap \rLqocn \times \rBsqpper{2\theta}(G)^4,\\
        \sYgamma &= \sYthetaa \times \sYthetao \times \sYthetai \text{ for } \theta = 1 - \nicefrac{1}{p}, \andtext\\
        \sYbeta &= \rHsqbcper{2\beta}(\Omegaatm)^2 \cap \rLqatm \times \rHsqper{2\beta}(\Omegaocn)^2 \cap \rLqocn \times \rHsqper{2\beta}(G)^4.
    \end{aligned}
    \right.
    \label{eq:Y spaces}
\end{equation}

In addition, for $u \in \sYgamma$ and $r>0$, we use $\oB_{\sYgamma}(u,r)$ to denote the closed ball of center $u$ with radius $r$ in~$\sYgamma$.
Let further $W = \Watm \times \Wocn \times \Wice \subset \sYgamma$ be an open subset such that for $u_0 \in W$, the properties from \eqref{eq:shape V} are satisfied. 
We then get the following Lipschitz estimates.

\begin{lem}\label{lem:estimates rhs sea ice}
	Let $p,q \in (1,\infty)$ such that $\nicefrac{1}{p} + \nicefrac{1}{q} < \nicefrac{1}{2}$, and let $u_0 = (v_{\atm,0}, v_{\ocn,0}, v_0) \in W$.
	Then there exists $r_0 > 0$ and a constant $L > 0$ such that $\oB_{\sYgamma}(u_0,r_0) \subset W$, and for all $v_1, v_2 \in \sYgammai$ with $u_1, u_2 \in \oB_{\sYgamma}(u_0,r_0)$ and $u \in \sYone$, it holds that $\| \Aiceom(v_1)u - \Aiceom(v_2)u \|_{\sYzeroi} \le L \|v_1 - v_2\|_{\sYgammai} \|u\|_{\sYone}$ as well as $\| \Ficeom(v_1) - \Ficeom(v_2) \|_{\sYzeroi} \le L \| v_1 - v_2 \|_{\sYgammai}$.
\end{lem}

\begin{proof}
    First, choose $r_0 > 0$ sufficiently small such that $\oB_{\sYgamma}(u_0,r_0) \subset W$ thanks to $W$ being open, and consider $v_1, v_2$ with $u_1, u_2 \in \oB_{\sYgamma}(u_0,r_0)$ as well as
    \begin{equation*}
        u = (\vatm,\vocn,\vice,h,a) \in \sYone = \rHsqper{2}(\Omegaatm)^2 \cap \rLqatm \times \rHsqper{2}(\Omegaocn)^2 \cap \rLqocn \times \rHsqper{2}(G)^4.
    \end{equation*}

As the spaces with horizontal periodicity embed continuously into the ones without, we omit the corresponding subscript in the remainder of the proof for convenience.
For most terms, we refer to \cite[Lemma~6.2]{BDHH:22}, because the procedure from this lemma carries over to the present situation.
Since the velocity of the atmospheric wind now acts as a variable, we verify the respective Lipschitz estimate separately.
To this end, we first observe that for suitable $g_1$, $g_2$, it holds that
\begin{equation*}
    \| |g_1| g_1 - |g_2| g_2 \|_{\rLq(G)} \le \| g_1 \|_{\rL^{2q}(G)} \| g_1 - g_2 \|_{\rL^{2q}(G)} + \| g_1 - g_2 \|_{\rL^{2q}(G)} \| g_2 \|_{\rL^{2q}(G)}.
\end{equation*}
Thanks to $\nicefrac{1}{p} + \nicefrac{1}{q} < \nicefrac{1}{2}$, there is $\eta > 0$ with $2 - \nicefrac{2}{p} - \eta - \nicefrac{2}{q} \ge 1 - \nicefrac{2}{2q}$, so \cite[Theorem~4.6.1]{Tri:78} yields that
\begin{equation}\label{eq:embedding Besov into Bessel}
     \rBsqp{2-\nicefrac{2}{p}}(G) \hra \rB^{2 - \nicefrac{2}{p} - \eta}_{q 2q}(G) \hra \rH^{1,2q}(G), \enspace \rBsqp{2 - \nicefrac{2}{p}}(\Omegaatm) \hra \rB^{2 - \nicefrac{2}{p} - \eta}_{q 2q}(\Omegaatm) \hra \rW^{\nicefrac{1}{q}+\nicefrac{\eta}{2},2q}(\Omegaatm).
\end{equation}
    
Thus, using the latter embeddings together with $v_i \in \Wice$ and continuity of $\trGai$ as an operator from $\rW^{\nicefrac{1}{q}+\nicefrac{\eta}{2},2q}(\Omegaatm)$ to $\rL^{2q}(G)$, we obtain
\begin{equation*}
\begin{aligned}
    &\quad \Big\| \frac{\ratm \Catm}{\rice h_1} (|\trGai v_{\atm,1}| \Ratm \trGai v_{\atm,1} - |\trGai v_{\atm,2}| \Ratm \trGai v_{\atm,2}) \Big\|_{\rLq(G)}\\
    &\le C\Bigl(\| v_{\atm,1} \|_{\rBsqp{2 - \nicefrac{2}{p}}(\Omegaatm)} + \| v_{\atm,2} \|_{\rBsqp{2 - \nicefrac{2}{p}}(\Omegaatm)}\Bigr) \| v_{\atm,1} - v_{\atm,2} \|_{\rBsqp{2 - \nicefrac{2}{p}}(\Omegaatm)}\\
    &\le C(r_0 + \| u_0 \|_{\sYgamma}) \| u_1 - u_2 \|_{\sYgamma}.
\end{aligned}
\end{equation*}

Similarly, we get
\begin{equation*}
\begin{aligned}
    &\Big \| \frac{1}{h_1} - \frac{1}{h_2} \Big \|_{\rL^\infty(G)} \cdot \| |\trGai v_{\atm,1}| \Ratm \trGai v_{\atm,1} \|_{\rLq(G)}
    \le \, &C(r_0 + \| u_0 \|_{\sYgamma})^2 \| u_1 - u_2 \|_{\sYgamma},
\end{aligned}
\end{equation*}
where we employed \eqref{eq:embedding Besov into Bessel}.
Therefore, a concatenation of the previous two estimates results in
\begin{equation*}
    \Big \| \frac{1}{\rice h_1} \tatm(v_{\atm,1}) - \frac{1}{\rice h_2} \tatm(v_{\atm,2}) \Big \|_{\rLq(G)} \le C \cdot \| u_1 - u_2 \|_{\sYgamma}
\end{equation*}
for a suitable constant $C$ depending on $r_0$ and $\| u_0 \|_{\sYgamma}$.
Recalling the shape of $\sYzero$ from \eqref{eq:Y spaces}, we conclude the Lipschitz estimate of $\Ficeom$.

Concerning $\Aiceom$, we refer to \cite[Lemma~6.2]{BDHH:22} except for $\Coi(h) \ddnGao$ which is new in comparison to the considerations in \cite{BDHH:22}.
The assumption $\nicefrac{1}{p} + \nicefrac{1}{q} < \nicefrac{1}{2}$ also implies continuity of $\ddnGao$ from $\rHsq{2}(\Omegaocn)$ to $\rLq(G)$.
A similar argument as above for the atmospheric wind terms exhibits that
\begin{equation*}
    \begin{aligned}
        \| \Coi(h_1) \ddnGao \vocn - \Coi(h_2) \ddnGao \vocn\|_{\rLq(G)} &\le \| \Coi(h_1) - \Coi(h_2) \|_{\rL^\infty(G)} \| \ddnGao \vocn \|_{\rLq(G)}\\
        &\le C \| h_1 - h_2 \| \| \vocn \|_{\rHsq{2}(\Omegaocn)}
        \le C \| v_1 - v_2 \|_{\sYgammai} \| u \|_{\sYone}.\qedhere
    \end{aligned}
\end{equation*}
\end{proof}

\section{Primitive Equations on Cylindrical Domains}
\label{sec:pe}

In this section, we collect properties of the primitive equations \eqref{eq:primitive equation general} on cylindrical domains $\Omega := G \times (a,b)$, where $G = (0,1) \times (0,1)$ and $-\infty < a < b < \infty$. 
Apart from recalling that the hydrostatic Stokes operator admits a bounded $\Hinfty$-calculus, we provide estimates of the bilinear term appearing in the primitive equations tailored to our setting.
Throughout this section, we denote by $v = (v_1,v_2)$ the principal variable corresponding to the horizontal velocity.

\subsection{Bounded $\Hinfty$-calculus of the hydrostatic Stokes operator}
\label{ssec:Hinfty hydrostatic Stokes}

\

We start by recalling the bounded $\Hinfty$-calculus of the hydrostatic Stokes operator from \autoref{ssec:hydrostatic Helmholtz and Stokes}. 

\begin{prop}[Theorem~3.1 in \cite{GGHHK:17}]\label{prop:bdd Hinfty calculus of hydrostatic Stokes}
    Let $q \in (1,\infty)$ and $\mu \ge 0$.
    Then $-\Abc + \mu$ has a bounded $\Hinfty$-calculus on $\rLqsigmabar(\Omega)$ with $\Phi_{-\Abc + \mu}^\infty = 0$ provided $\mu > 0$.
    In the case $\GaD \neq \emptyset$, the above assertion is also valid for $\mu = 0$.
\end{prop}

Choosing $a = \ktwo$, $b = \hatm$ and the boundary conditions \eqref{eq:boundary conditions v}, we obtain $\Abc = \Aatm$, while the choice $a = -\hocn$, $b = 0$ and the boundary conditions as in \eqref{eq:boundary conditions A0} lead to $\Abc = \An$.
Hence, we conclude the following from \autoref{prop:bdd Hinfty calculus of hydrostatic Stokes} and from results in \cite[Section~3]{GGHHK:17}.

\begin{cor}\label{cor:Hinfty Stokes atm ocn}
	Let $p,q \in (1,\infty)$ and $\mu \ge 0$. Then
	\begin{enumerate}[(a)]
		\item the operator $-\Aatm+\mu$ admits a bounded $\Hinfty$-calculus on $\rLqatm$ with $\Phi_{\Aatm}^\infty = 0$, and
		\item the operator $-\An$ admits a bounded $\Hinfty$-calculus on $\rLqocn$ with $\Phi_{\An}^\infty = 0$, and it holds that $0 \in \rho(-\An)$.
	\end{enumerate}
\end{cor}

\subsection{Estimates of the bilinearity}
\label{ssec:nonlinearity PE}

\

We recall the bilinearity $\rF$ from \eqref{eq:bilinearity primitive eq}.
As a preparation, we briefly introduce anisotropic function spaces, see also \cite[Section~5]{HK:16}.
For $s,t \ge 0$ and $1 \le p,q \le \infty$, we consider $\rWz{r}{q}\rWxy{s}{p} := \rWsq{r}((a,b);\rWsp{s}(G))$ equipped with the norms $\| v \|_{\rWz{r}{q}\rWxy{s}{p}} = \left\| \| v(\cdot,z) \|_{\rWsp{s}(G)} \right \|_{\rWsq{r}(a,b)}$, so they become Banach spaces.
The same remains valid when considering Bessel potential spaces instead of Sobolev spaces.
First, employing H\"older's inequality independently with respect to $z$ and $(x,y)$, we derive that for $p$, $p_1$, $p_2$ and $q$, $q_1$, $q_2$ such that $\nicefrac{1}{p_1} + \nicefrac{1}{p_2} = \nicefrac{1}{p}$ and $\nicefrac{1}{q_1} + \nicefrac{1}{q_2} = \nicefrac{1}{q}$, it holds that
\begin{equation}\label{eq:Holder anisotropic}
    \| f g \|_{\rLz{q} \rLxy{p}} \le \| f \|_{\rLz{q_1} \rLxy{p_1}} \| g \|_{\rLz{q_2} \rLxy{p_2}}.
\end{equation}

Moreover, we will also use embedding relations separately in $z$ and $(x,y)$, so we observe that
\begin{equation*}
    \begin{aligned}
        &\rWz{r}{q}\rWxy{s}{p} \hra \rWz{r_1}{q_1}\rWxy{s}{p} &&\for \rWz{r}{q}(a,b) \hra \rWz{r_1}{q_1}(a,b), \andtext\\
        &\rWz{r}{q}\rWxy{s}{p} \hra \rWz{r}{q}\rWxy{s_1}{p_1} &&\for \rWxy{s}{p}(G) \hra \rWxy{s_1}{p_1}(G).
    \end{aligned}
\end{equation*}
In addition, we remark that $\rWsq{r+s}(\Omega) \subset \rWz{r}{q}\rWxy{s}{q}$ is valid provided $p = q$, and we point out that these relations are also true for Bessel potential spaces.

Analogously, for $s,t \ge 0$ and $1 \le p_0,p_1,q_0,q_1 \le \infty$, we set $\rBrpqz{r}{q_0}{p_0}\rBrpqxy{s}{q_1}{p_1} := \rBrpq{r}{q_0}{p_0}((a,b);\rBrpq{s}{q_1}{p_1}(G))$ and endow these spaces with the corresponding norms.
For $s,t \ge 0$, the above identity in the case of Sobolev and Bessel potential spaces remains valid, i.\ e., it holds that $\rBrpq{r+s}{q}{p}(\Omega) \subset \rBrpqz{r}{q}{p}\rBrpqxy{s}{q}{p}$.

The next result provides analogous estimates as in \cite[Lemma 5.1]{HK:16} or \cite[Lemma 6.1]{GGHHK:20} for our setting.

\begin{prop}\label{prop:estimates bilinearity PE}
	Let $p,q \in (1,\infty)$ such that $\nicefrac{1}{p} + \nicefrac{1}{q} < \nicefrac{1}{2}$, and consider the bilinear map $\rF$ as defined in~\eqref{eq:bilinearity primitive eq}. 
	Then for $\rF \colon \rBrpq{2-\nicefrac{2}{p}}{q}{p}(\Omega)^2 \cap \rLqOmegasigmabar \times \rBrpq{2-\nicefrac{2}{p}}{q}{p}(\Omega)^2 \cap \rLqOmegasigmabar \to \rLqOmegasigmabar$, we have the following:
	\begin{enumerate}[(a)]
		\item 
		For $v \in \rBrpq{2-\nicefrac{2}{p}}{q}{p}(\Omega)^2 \cap \rLqOmegasigmabar$,
		there exists a constant $C > 0$, depending only on $\Omega$ and $q$, such that
		\begin{equation*}
			\| \rF(v,v) \|_{\rLqOmegasigmabar} \leq C \| v \|_{\rBrpq{2-\nicefrac{2}{p}}{q}{p}(\Omega) \cap \rLqOmegasigmabar}^2.
		\end{equation*}
		\item
		There exists a constant $C > 0$ such that for $v, v' \in \rBrpq{2-\nicefrac{2}{p}}{q}{p}(\Omega)^2 \cap \rLqOmegasigmabar$, we have
		\begin{equation*}
			\| \rF(v) - \rF(v') \|_{\rLqOmegasigmabar} \leq 
			C \Bigl( \| v \|_{\rBrpq{2-\nicefrac{2}{p}}{q}{p}(\Omega) \cap \rLqOmegasigmabar}+\| v'\|_{\rBrpq{2-\nicefrac{2}{p}}{q}{p}(\Omega) \cap \rLqOmegasigmabar}\Bigr) \cdot \| v - v' \|_{\rBrpq{2-\nicefrac{2}{p}}{q}{p}(\Omega) \cap \rLqOmegasigmabar} . 
		\end{equation*}
	\end{enumerate} 
\end{prop}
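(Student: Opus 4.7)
The plan is to decompose $\rF(v,v') = \P\bigl((v\cdot\nablaH)v'\bigr) + \P\bigl(w(v)\,\dz v'\bigr)$ and, since $\P$ is bounded on $\rL^q(\Omega)^2$, reduce matters to controlling the two convective pieces in $\rL^q(\Omega)$. I would establish the bilinear estimate
\begin{equation*}
\|\rF(v,v')\|_{\rL^q(\Omega)} \le C\,\|v\|_X\,\|v'\|_X, \qquad X := \rBrpq{2-\nicefrac{2}{p}}{q}{p}(\Omega)^2 \cap \rLqOmegasigmabar,
\end{equation*}
from which (a) is immediate upon taking $v=v'$, while (b) follows from the bilinear identity $\rF(v)-\rF(v') = \rF(v-v',v) + \rF(v',v-v')$ and one further application of the bilinear bound.

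The first step is to convert the hypothesis $\nicefrac{1}{p}+\nicefrac{1}{q}<\nicefrac{1}{2}$ into the quantitative statement that there is $\epsilon>0$ with $1-\nicefrac{2}{p}-\nicefrac{3}{2q}\ge \epsilon$; since $\Omega\subset\R^3$, the Besov--Bessel embedding theorem then gives
\begin{equation*}
\rBrpq{2-\nicefrac{2}{p}}{q}{p}(\Omega) \hra \rH^{1+\epsilon,2q}(\Omega) \hra \rL^{2q}(\Omega),
\end{equation*}
so that $v$, $\nablaH v$ and $\dz v$ all lie in $\rL^{2q}(\Omega)$ with norms controlled by $\|v\|_X$. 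For the horizontal piece, Hölder's inequality with the split $\nicefrac{1}{q}=\nicefrac{1}{2q}+\nicefrac{1}{2q}$ yields
\begin{equation*}
\|(v\cdot\nablaH)v'\|_{\rL^q(\Omega)} \le \|v\|_{\rL^{2q}(\Omega)^2}\,\|\nablaH v'\|_{\rL^{2q}(\Omega)^{2\times 2}} \le C\,\|v\|_X\,\|v'\|_X.
\end{equation*}

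The vertical convection term requires passing to anisotropic spaces. From $w(v)(x,y,z)=-\int_a^z \divH v(x,y,\xi)\,d\xi$ and the boundedness of $(a,b)$, I obtain the pointwise bound $\|w(v)(x,y,\cdot)\|_{\rLz{\infty}} \le \|\divH v(x,y,\cdot)\|_{\rL^1(a,b)} \le C\,\|\divH v(x,y,\cdot)\|_{\rLz{2q}}$, whence $\|w(v)\|_{\rLxy{2q}\rLz{\infty}} \le C\,\|\nablaH v\|_{\rL^{2q}(\Omega)} \le C\,\|v\|_X$. An anisotropic Hölder estimate along the lines of \eqref{eq:Holder anisotropic}, with horizontal split $\nicefrac{1}{q}=\nicefrac{1}{2q}+\nicefrac{1}{2q}$ and vertical split $\nicefrac{1}{q}=0+\nicefrac{1}{q}$, then produces
\begin{equation*}
\|w(v)\,\dz v'\|_{\rL^q(\Omega)} \le \|w(v)\|_{\rLxy{2q}\rLz{\infty}}\,\|\dz v'\|_{\rLxy{2q}\rLz{q}} \le C\,\|v\|_X\,\|v'\|_X,
\end{equation*}
where $\|\dz v'\|_{\rLxy{2q}\rLz{q}} \le C\,\|\dz v'\|_{\rL^{2q}(\Omega)}$ by Hölder in $z$ since $q\le 2q$ and the interval is bounded.

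The main obstacle is calibrating the exponents so that the \emph{isotropic} Besov regularity of $v$ translates into the \emph{anisotropic} bounds needed to absorb the vertical primitive defining $w(v)$, in particular producing $\rL^{\infty}$-integrability in $z$ at the correct horizontal Lebesgue exponent while retaining one horizontal derivative on $v$. This is precisely where the sharp condition $\nicefrac{1}{p}+\nicefrac{1}{q}<\nicefrac{1}{2}$ is indispensable, and the strategy parallels \cite[Lemma~5.1]{HK:16} and \cite[Lemma~6.1]{GGHHK:20}.
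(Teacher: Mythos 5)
Your proposal is correct, and its overall architecture coincides with the paper's: reduce (b) to the bilinear bound via $\rF(v)-\rF(v')=\rF(v-v',v)+\rF(v',v-v')$, use boundedness of $\P$, and treat $\bilinH{v}{v'}$ and $w(v)\dz v'$ separately with H\"older plus the representation $w(v)=-\int_a^z\divH v\,\d\xi$. Where you genuinely diverge is in the function-space machinery for the vertical term. The paper stays anisotropic throughout: it bounds $\|w\|_{\rLz{\infty}\rLxy{2q}}$ via Poincar\'e in $z$ and then closes the loop with embeddings of anisotropic Besov spaces such as $\rBrpqz{\eps}{q}{p}\rBrpqxy{2-\nicefrac{2}{p}-\eps}{q}{p}\supset\rBrpq{2-\nicefrac{2}{p}}{q}{p}(\Omega)$, keeping $\divH v$ in the mixed space $\rLz{q}\rWxy{1}{2q}$. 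You instead exploit the single isotropic embedding $\rBrpq{2-\nicefrac{2}{p}}{q}{p}(\Omega)\hra\rH^{1+\eps,2q}(\Omega)\hra\rW^{1,2q}(\Omega)$ (valid here because $1-\nicefrac{2}{p}-\nicefrac{3}{2q}>\nicefrac{1}{2q}>0$) and then do only elementary pointwise and H\"older manipulations in $z$; the reversed ordering $\rLxy{2q}\rLz{\infty}$ of your mixed norm is harmless for the H\"older pairing you perform. Your route is shorter and avoids the anisotropic Besov embeddings entirely, at the price of being more wasteful in the vertical variable; the paper's anisotropic bookkeeping is the version that survives when $p,q$ are pushed toward criticality, as in the cited results of Hieber--Kashiwabara and Giga et al., but under the standing hypothesis $\nicefrac{1}{p}+\nicefrac{1}{q}<\nicefrac{1}{2}$ both arguments close. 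Your horizontal estimate also uses the split $\nicefrac{1}{q}=\nicefrac{1}{2q}+\nicefrac{1}{2q}$ rather than the paper's $\nicefrac{1}{3q}+\nicefrac{2}{3q}$, which is an immaterial difference.
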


\begin{proof}
   Since $\bilin{v}{v'}$ is bilinear, we observe that assertion (b) can be shown in a similar way as (a).
    More precisely, we obtain
    \begin{equation*}
        \begin{aligned}
            \| \rF(v) - \rF(v') \|_{\rLqOmegasigmabar} &\le \| \rF(v,v-v') \|_{\rLqOmegasigmabar} + \| \rF(v-v',v') \|_{\rLqOmegasigmabar}\\
            &\le C \Bigl( \| v \|_{\rBrpq{2-\nicefrac{2}{p}}{q}{p}(\Omega) \cap \rLqOmegasigmabar}+\| v'\|_{\rBrpq{2-\nicefrac{2}{p}}{q}{p}(\Omega) \cap \rLqOmegasigmabar}\Bigr) \cdot \| v - v' \|_{\rBrpq{2-\nicefrac{2}{p}}{q}{p}(\Omega) \cap \rLqOmegasigmabar},
        \end{aligned}
    \end{equation*}
    so it suffices to prove (a).
    As $\sP \colon \rLq(\Omega)^2 \to \rLqOmegasigmabar$ is bounded and $\rBrpq{2-\nicefrac{2}{p}}{q}{p}(\Omega)^2 \cap \rLqOmegasigmabar \subset \rBrpq{2-\nicefrac{2}{p}}{q}{p}(\Omega)^2$ holds true, it is sufficient to bound the $\rLq(\Omega)$-norms of $\bilinH{v}{v}$ and $\bilinz{v}{v}$ separately by $C \| v \|_{\rBrpq{2-\nicefrac{2}{p}}{q}{p}(\Omega)}^2$ for some $C > 0$.
    Next, we deduce from $\nicefrac{1}{p} + \nicefrac{1}{q} < \nicefrac{1}{2}$ the existence of $\eta > 0$ such that $2 - \nicefrac{2}{p} - \eta - \nicefrac{3}{q} \ge - \nicefrac{3}{3q}$ and $2 - \nicefrac{2}{p} - \eta - \nicefrac{3}{q} \ge 1 - \nicefrac{2}{q}$.
    It then follows by \cite[Theorem~4.6.1]{Tri:78} that
    \begin{equation}\label{eq:embeddings bilin}
        \rBrpq{2-\nicefrac{2}{p}}{q}{p}(\Omega) \hra \rBrpq{2-\nicefrac{2}{p}-\eta}{q}{3q}(\Omega) \hra \rL^{3q}(\Omega) \andtext \rBrpq{2-\nicefrac{2}{p}}{q}{p}(\Omega) \hra \rBrpq{2-\nicefrac{2}{p} - \eta}{q}{\nicefrac{3q}{2}}(\Omega) \hra \rH^{1,\nicefrac{3q}{2}}(\Omega) \hra \rW^{1,\nicefrac{3q}{2}}(\Omega),
    \end{equation}
    where we used that $\rH^{1,\nicefrac{3q}{2}}(\Omega) \hra \rW^{1,\nicefrac{3q}{2}}(\Omega)$ by virtue of $q \ge 2$, see e.\ g.\ \cite[Chapter~2]{Tri:78}.
    Combining the embeddings from \eqref{eq:embeddings bilin} with H\"older's inequality, we obtain
    \begin{equation*}
        \| \bilinH{v}{v} \|_{\rLq(\Omega)} \le C \| v \|_{\rL^{3q}(\Omega)} \| v \|_{\rW^{1,\nicefrac{3q}{2}}(\Omega)} \le C \| v \|_{\rBrpq{2-\nicefrac{2}{p}}{q}{p}(\Omega)}^2,
    \end{equation*}
    so the proof for the first addend of the bilinearity is complete.
    For the second one, we use \eqref{eq:Holder anisotropic} for
    \begin{equation*}
        \| w \dz{v} \|_{\rLq(\Omega)} \le \| w \|_{\rLz{\infty} \rLxy{2q}} \| \dz{v} \|_{\rLz{q} \rLxy{2q}},
    \end{equation*}
    and we now find estimates for $\| w \|_{\rLz{\infty} \rLxy{2q}}$ and $\| \dz{v} \|_{\rLz{q} \rLxy{2q}}$ separately.
    Exploiting $\rWsq{1}(a,b) \hra \rL^{\infty}(a,b)$, Poincar\'e's inequality, the embedding
    \begin{equation*}
        \rBrpq{2-\nicefrac{2}{p} - \eta}{q}{p}(G) \hra \rH^{1,2q}(G) \hra \rW^{1,2q}(G), \text{ $\eta > 0$ small},
    \end{equation*}
    as well as the above observations concerning embeddings of anisotropic Sobolev spaces, Bessel potential spaces and Besov spaces, and using the assumption that $\divH v + \dz{w} = 0$, we infer that
    \begin{equation*}
        \begin{aligned}
            \| w \|_{\rLz{\infty} \rLxy{2q}} &\le C \| w \|_{\rWz{1}{q} \rLxy{2q}} \le C \| \dz{w} \|_{\rLz{q} \rLxy{2q}} \le C \| \divH v \|_{\rLz{q} \rLxy{2q}}\\
            &\le C \| v \|_{\rLz{q} \rWxy{1}{2q}} \le C \| v \|_{\rBrpqz{\eta}{q}{p} \rBrpqxy{2-\nicefrac{2}{p}-\eta}{q}{p}} \le C \| v \|_{\rBrpq{2-\nicefrac{2}{p}}{q}{p}(\Omega)}.
        \end{aligned}
    \end{equation*}
    
    The condition on $p$ and $q$ yields $1 - \nicefrac{2}{p} - 2 \eta - \nicefrac{2}{q} \ge - \nicefrac{2}{2q}$ for some $\eta > 0$, so \cite[Theorem~4.6.1]{Tri:78} implies
    \begin{equation*}
        \rBrpq{1 - \nicefrac{2}{p} - \eta}{q}{p}(G) \hra \rBrpq{1 - \nicefrac{2}{p} - 2 \eta}{q}{2q}(G) \hra \rL^{2q}(G).
    \end{equation*}
    
    Using the above remarks on the relations of anisotropic Sobolev spaces again and invoking the last embedding, we obtain
    \begin{equation*}
        \begin{aligned}
            \| \dz{v} \|_{\rLz{q} \rLxy{2q}} \le C \| v \|_{\rWz{1}{q} \rLxy{2q}} \le C \| v \|_{\rBrpqz{1+\eta}{q}{p} \rBrpqxy{1 - \nicefrac{2}{p} - \eta}{q}{p}} \le C \| v \|_{\rBrpq{2 - \nicefrac{2}{p}}{q}{p}(\Omega)}.
        \end{aligned}
    \end{equation*}
    The claim follows then by concatenating the previous estimates.
\end{proof}

We get the following corollary by recalling the concrete shape of $\sYzero$ and $\sYgamma$ from \eqref{eq:Y spaces}.

\begin{cor}\label{cor:estimates bilinearity PE atm ocn}
	Let $p,q \in (1,\infty)$ such that $\nicefrac{1}{p} + \nicefrac{1}{q} < \nicefrac{1}{2}$, $v_{\atm,1}, v_{\atm,2} \in \sYgammaa$ and $v_{\ocn,1}, v_{\ocn,2} \in \sYgammao$. Then the following holds.
	\begin{enumerate}[(a)]
		\item 
		There exists a constant $C > 0$ such that
		\begin{equation*}
		    \begin{aligned}
		        \| \Fatm(v_{\atm,1}) - \Fatm(v_{\atm,2}) \|_{\sYzeroa} &\leq 
			    C \bigl( \| v_{\atm,1} \|_{\sYgammaa}+\| v_{\atm,2}\|_{\sYgammaa} \bigr) \| v_{\atm,1} - v_{\atm,2} \|_{\sYgammaa}.
		    \end{aligned}
		\end{equation*}
		\item 
		There exists a constant $C > 0$ such that
		\begin{equation*}
		    \begin{aligned}
		        \| \Focn(v_{\ocn,1}) - \Focn(v_{\ocn,2}) \|_{\sYzeroo} &\leq 
			    C \bigl( \| v_{\ocn,1} \|_{\sYgammao}+\| v_{\ocn,2}\|_{\sYgammao} \bigr) \| v_{\ocn,1} - v_{\ocn,2} \|_{\sYgammao} .
		    \end{aligned}
		\end{equation*}
	\end{enumerate} 
\end{cor}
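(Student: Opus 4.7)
The plan is straightforward: both statements follow by direct specialization of \autoref{prop:estimates bilinearity PE} to the atmospheric and oceanic layer domains. First, I would recall from \eqref{eq:Y spaces} that with the choice $\theta = 1 - \nicefrac{1}{p}$ the trace spaces take the concrete form
\begin{equation*}
    \sYgammaa = \rBsqp{2-\nicefrac{2}{p}}(\Omegaatm)^2 \cap \rLqatm, \qquad \sYgammao = \rBsqp{2-\nicefrac{2}{p}}(\Omegaocn)^2 \cap \rLqocn,
\end{equation*}
while the ground spaces satisfy $\sYzeroa = \rLqatm$ and $\sYzeroo = \rLqocn$. Noting that $\rBsqp{s} \cong \rBrpq{s}{q}{p}$, these are precisely the spaces appearing in the formulation of \autoref{prop:estimates bilinearity PE}.

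Next, I would observe that $\Fatm$ and $\Focn$ are, by definition in \autoref{ssec:new formulation}, nothing but the bilinearity $\rF$ from \eqref{eq:bilinearity primitive eq} applied on the specific layer domains $\Omegaatm = \R^2 \times (\ktwo, \hatm)$ and $\Omegaocn = \R^2 \times (-\hocn, 0)$, respectively. Since $-\infty < \ktwo < \hatm < \infty$ and $-\infty < -\hocn < 0 < \infty$, each of these is a layer of the shape $\R^2 \times (a,b)$ with $-\infty < a < b < \infty$ required in \autoref{sec:pe}. Therefore the hypotheses of \autoref{prop:estimates bilinearity PE} are satisfied in both cases.

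Applying \autoref{prop:estimates bilinearity PE}(b) to $\vatm, \vatm' \in \sYgammaa$ yields assertion~(a), and applying it to $\vocn, \vocn' \in \sYgammao$ yields assertion~(b), with constants $C$ depending only on the respective layer domain (through its height) and on $q$. Since there are no further technical obstacles beyond matching notation and domains, the proof is essentially a one-line deduction from \autoref{prop:estimates bilinearity PE}; the substantive analytic work has already been carried out in proving that proposition via the anisotropic embeddings in \eqref{eq:embeddings bilin}.
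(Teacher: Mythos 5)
Your proposal is correct and matches the paper's approach exactly: the paper also derives this corollary by simply recalling the concrete shapes of $\sYzeroa$, $\sYzeroo$, $\sYgammaa$, $\sYgammao$ from \eqref{eq:Y spaces} with $\theta = 1 - \nicefrac{1}{p}$ and specializing \autoref{prop:estimates bilinearity PE}(b) to the two layer domains $\Omegaatm$ and $\Omegaocn$. Nothing further is needed.
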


\section{The Stationary Hydrostatic Stokes Problem}
\label{sec:stationary problem}

This section is dedicated to the analysis of the stationary hydrostatic Stokes problem with inhomogeneous boundary conditions. 
The stationary hydrostatic Stokes problem corresponds to the coupling condition imposed in \eqref{eq:coupling} as well as the boundary condition for $\vocn$ on the lower boundary as made precise in \eqref{eq:boundary conditions v}, and it is given by
\begin{equation}
	\left\{
	\begin{aligned} 
		\Amocn \vocn &= 0, &&\onOmegaocn, \\
		\trGao \vocn &= \varphi, &&\on \Gao, \\
		\trGab \vocn &= 0, &&\on \Gab, 
	\end{aligned} 
	\right.
	\label{eq:inhomogeneous hydrostatic Stokes operator}
\end{equation}
on $\rLqocn$ with $\varphi \in \rLq(G)^2$. 
We denote the solution operator of \eqref{eq:inhomogeneous hydrostatic Stokes operator} by $\Llambda{0} \colon \rLq(\Gao)^2 \to \rLqocn$ and call it the {\em hydrostatic Dirichlet operator}. 
In the main part of this section, we show that $\Llambda{0}$ is well-defined and bounded. 
This operator and its properties are essential for the decoupling approach in \autoref{sec:decoupling}.

\subsection{The hydrostatic Dirichlet operator}
\label{ssec:Dirichlet operator}

\

We start by solving \eqref{eq:inhomogeneous hydrostatic Stokes operator} for smooth functions on the boundary. 
We need the following auxiliary result which guarantees the existence of an extension with vertical average $0$. 
To this end, we construct a smooth solution to the associated extension problem
\begin{equation}
	\left\{
	\begin{aligned}
		\bar{g} &= 0, &&\onOmegaocn, \\
		\trGao g &= \varphi, &&\on \Gao, \\
		\trGab g &= 0, &&\on \Gab.
	\end{aligned}
	\right.
	\label{eq:extension}
\end{equation}

The auxiliary result reads as follows.

\begin{lem}
	Let $\varphi \in \rC_\per^\infty(G)^2$. Then there exists a smooth solution $g \in \rC_\per^\infty(\Omegaocn)^2$ of \eqref{eq:extension}.
\end{lem}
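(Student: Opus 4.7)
The plan is to construct $g$ explicitly as the separated product $g(x,y,z) = \eta(z)\,\varphi(x,y)$, which reduces the vector-valued extension problem \eqref{eq:extension} to finding a single smooth scalar function $\eta \in \rC^\infty([-\hocn,0])$ satisfying the three conditions
\begin{equation*}
    \eta(0) = 1, \qquad \eta(-\hocn) = 0, \qquad \int_{-\hocn}^0 \eta(z)\,dz = 0.
\end{equation*}
Indeed, for such $\eta$ the boundary identities $\trGao g = \varphi$ and $\trGab g = 0$ are immediate from the first two conditions, while the vanishing-average condition $\bar g = 0$ reduces to $\bar g(x,y) = \hocn^{-1}\varphi(x,y)\int_{-\hocn}^0 \eta(z)\,dz = 0$, which holds by the third condition regardless of $\varphi$.

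To produce $\eta$ I would use the three-parameter family of quadratic polynomials and solve the three linear conditions above, obtaining the explicit choice $\eta(z) = 1 + 4z/\hocn + 3z^2/\hocn^2$ (a direct substitution confirms all three identities). Alternatively, one can take any smooth cutoff $\chi \in \rC^\infty([-\hocn,0])$ with $\chi(0)=1$ and $\chi(-\hocn)=0$ and subtract a compactly supported correction $\psi \in \rCcinfty((-\hocn,0))$ whose integral equals $\int_{-\hocn}^0 \chi(z)\,dz$; such a $\psi$ exists since the integration functional is surjective from $\rCcinfty((-\hocn,0))$ onto $\R$.

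With $\eta$ in hand, it remains to verify the regularity and support properties of $g$ componentwise: smoothness on $\bar\Omegaocn$ is inherited from smoothness of $\eta$ on $[-\hocn,0]$ and of $\varphi$ on $\R^2$; the horizontal compact support is inherited from $\varphi$ and, combined with the bounded vertical range $[-\hocn,0]$, yields that $\mathrm{supp}(g) \subset \mathrm{supp}(\varphi) \times [-\hocn,0]$ is compact in $\bar\Omegaocn$. There is no genuine obstacle here: the three constraints on $\eta$ are linearly independent affine conditions on a one-variable function, so a finite-dimensional parameter count or the explicit polynomial above suffices. Applying the construction to each component of $\varphi$ produces the desired $\R^2$-valued extension.
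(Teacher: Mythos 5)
Your proposal is correct and follows essentially the same route as the paper: both separate variables as $g(x,y,z)=\eta(z)\varphi(x,y)$, reduce to the one-dimensional problem $\eta(0)=1$, $\eta(-\hocn)=0$, $\int_{-\hocn}^{0}\eta=0$, and solve it with the same quadratic $\eta(z)=1+4z/\hocn+3z^2/\hocn^2$. The additional remarks on the cutoff-plus-correction alternative and on compact support are fine but not needed beyond what the paper does.
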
 
\begin{proof}
	Note that $\Omegaocn = G \times (-\hocn,0)$. 
	The approach for $g$ is a splitting into the horizontal and the vertical part $g(\xH,z) = r(z) \cdot \varphi(\xH)$. 
	Therefore, the problem \eqref{eq:extension} reduces to the construction of a smooth solution $r$ to the one-dimensional extension problem with mean value $0$ given by
	\begin{equation}
		\left\{
		\begin{aligned}
			\bar{r} &= 0, &&\on{(-\hocn,0)}, \\
			r(0) &= 1, && \\
			r(-\hocn) &= 0. &&
		\end{aligned}
		\right.
		\label{eq:extension 1D}
	\end{equation}
	
	A solution of \eqref{eq:extension 1D} is given by $r(z) = \nicefrac{3}{\hocn^2} z^2 + \nicefrac{4}{\hocn} z + 1$, and it is smooth, i.\ e., $g = r \cdot \varphi \in \rC_\per^\infty(\Omegaocn)^2$.
\end{proof}

\begin{rem}
\begin{enumerate}[(a)]
    \item In general, the solutions of \eqref{eq:extension} are not unique.
    \item Solving \eqref{eq:extension} guarantees in particular that $\divH \bar{g} = 0$ for all $\varphi \in \rC_\per^\infty(G)^2$.
    Functions of the shape $g(\xH,z) = r(z) \cdot \varphi(\xH)$ satisfy $\divH \bar{g} = 0$ for all $\varphi \in \rC_\per^\infty(G)^2$ if and only if $\bar{r}$ vanishes.
\end{enumerate}
\end{rem}

The regularity and average $0$ of the solution $g$ of \eqref{eq:extension} ensure $g \in D(\Amocn)$, so $v_g := \vocn - g \in D(\Amocn)$ satisfies 
\begin{equation}
	\left\{
	\begin{aligned} 
		\Amocn v_g &= f, &&\onOmegaocn, \\
		\trGao v_g &= 0, &&\on \Gao, \\
		\trGab v_g &= 0, &&\on \Gab, 
	\end{aligned} 
	\right.
	\label{eq:vg}
\end{equation}
with $f := -\Amocn g \in \rLqocn$.
Consequently, \eqref{eq:inhomogeneous hydrostatic Stokes operator} admits a unique solution if and only if \eqref{eq:vg} does so.
Note that \eqref{eq:vg} is equivalent to $\An v_g = f$. 
The next result follows from $0 \in \rho(\An)$ in \autoref{cor:Hinfty Stokes atm ocn}(b).

\begin{lem}\label{lem:L_0 densely defined}
	For $\varphi \in \rC_\per^\infty(G)^2$, there exists a unique solution $u \in D(\Amocn)$ of \eqref{eq:inhomogeneous hydrostatic Stokes operator}.
\end{lem}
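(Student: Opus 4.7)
The plan follows the reduction already sketched in the excerpt: convert the inhomogeneous boundary value problem into a homogeneous one on $\An$, for which the resolvent $(\mu-\An)^{-1}$ is already available, via the explicit smooth extension $g$ constructed in the preceding lemma.

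First, I would invoke the preceding lemma to obtain $g \in \rCcinfty(\Omegaocn)^2$ solving \eqref{eq:extension}; in particular $\bar g \equiv 0$, $\trGao g = \varphi$ and $\trGab g = 0$. I would then verify $g \in D(\Amocn)$. Since $\bar g = 0$ we have $\divH \bar g = 0$, so $g$ belongs to the closure $\rLqocn$ defined in \eqref{eq:Lqsigmabar}; smoothness of $g$ gives $\Delta g \in \rLq(\Omegaocn)^2$, and the hydrostatic Helmholtz projection $\Pocn$ from \eqref{eq:helmholtz} therefore yields $\Amocn g = \Pocn \Delta g \in \rLqocn$. Consequently $f := (\Amocn - \mu)g$ is a well-defined element of $\rLqocn$.

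Second, I would introduce $v_g := v - g$ and observe that $v$ solves \eqref{eq:inhomogeneous hydrostatic Stokes operator} in $D(\Amocn)$ if and only if $v_g$ lies in $D(\Amocn)$ with $\trGao v_g = 0$ and $\trGab v_g = 0$ and satisfies $(\mu - \Amocn)v_g = f$. By the definition of $\An$ in \eqref{eq:boundary conditions A0}, these requirements amount exactly to $v_g \in D(\An)$ and $(\mu - \An) v_g = f$.

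Third, I would invoke that $\mu > \mu_0$ lies in $\rho(\An)$: by \autoref{cor:Hinfty Stokes atm ocn}(b), the shifted operator $-\An + \mu$ admits a bounded $\Hinfty$-calculus on $\rLqocn$, so $\mu \in \rho(\An)$ and $v_g := (\mu - \An)^{-1} f \in D(\An)$ is the unique solution of the reduced problem. Then $v := v_g + g \in D(\Amocn)$ solves \eqref{eq:inhomogeneous hydrostatic Stokes operator}. Uniqueness is immediate from the same spectral fact: any two solutions $v_1, v_2$ have difference $v_1 - v_2 \in D(\An)$ annihilated by $\mu - \An$, hence equal.

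There is no genuine analytic obstacle here; the only subtlety is the bookkeeping that ensures $g \in D(\Amocn)$, which relies crucially on the identity $\bar g = 0$ built into the construction of the one-dimensional profile $r$ in the preceding lemma, so that hydrostaticity is preserved by the extension. Once this is in place the argument is a clean resolvent computation for $\An$.
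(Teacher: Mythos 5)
Your proposal is correct and follows essentially the same route as the paper: extend $\varphi$ by the smooth mean-zero profile $g$, note $g \in D(\Amocn)$ since $\bar g = 0$ preserves the hydrostatic solenoidal condition, reduce to the homogeneous problem $(\mu-\An)v_g = (\Amocn-\mu)g$, and conclude existence and uniqueness from $\mu \in \rho(\An)$. No discrepancies to report.
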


As a consequence of \autoref{lem:L_0 densely defined}, $\Llambda{0}$ exists as a densely defined operator with domain $\rC_\per^\infty(G)^2$.
This allows us to define a unique adjoint $\Llambda{0}' \colon D(\Llambda{0}') \subset \rLqocn' \to (\rLq(G)^2)'$.
Observe that $\rLqocn' \cong \rLpocn$
and $(\rLq(G)^2)' \cong \rL^{q'}(G)^2$
with $\nicefrac{1}{q}+\nicefrac{1}{q'} = 1$.

We denote by $\ddnr{r} \colon D(\ddnr{r}) \subset \rLrocn{r} \to \rL^r(G)^2$ the (distributional) derivative in the direction of $\nGao$ with maximal domain $D(\ddnr{r}) := \{ f \in \rLrocn{r} \colon \ddnr{r} f \in \rL^r(G)^2 \}$ for $r \in (1,\infty)$. 
Moreover, we denote by $\Anr{r}$ the operator with homogeneous boundary conditions given in \eqref{eq:boundary conditions A0} on $\rLrocn{r}$ for $r \in (1,\infty)$. 
We remark that  $(\Anq)' = \Anp$ with $\nicefrac{1}{q}+\nicefrac{1}{q'} = 1$. 
The adjoint $\Llambda{0}'$ is related to the hydrostatic Stokes operator with homogeneous boundary conditions and the normal derivative in the following way.

\begin{lem}\label{lem:L_0 adjoint}
	Let $q, q' \in (1,\infty)$ such that $\nicefrac{1}{q}+\nicefrac{1}{q'} = 1$.
	The adjoint $\Llambda{0}'$ of $\Llambda{0}$ satisfies
	\begin{equation}
		\Llambda{0}' = \ddnp R(0,\Anq)' = \ddnp R(0,\Anp) .
		\label{eq:adjoint L0}
	\end{equation}
\end{lem}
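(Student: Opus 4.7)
The plan is to establish the first identity by a Green's-formula computation, and then to deduce the second from the already-known relation $(\Anq)' = \Anp$ combined with standard adjoint resolvent calculus at the real point $\mu$.

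To prove that $\Llambda{\mu}' = \ddnp R(\mu, \Anp)$, I fix an arbitrary $\varphi \in \rCcinfty(\R^2)^2 \subset D(\Llambda{\mu})$ and $f \in \rLpocn$, and set $v := \Llambda{\mu}\varphi$ and $g := R(\mu, \Anp) f$. Then $v$ satisfies the pressure formulation of \eqref{eq:inhomogeneous hydrostatic Stokes operator}, i.e., $\mu v - \Delta v + \nablaH \pi = 0$ together with $\divH \vbar = 0$, $\trGao v = \varphi$ and $\trGab v = 0$, while $g \in D(\Anp) \subset \rH^{2,q'}(\Omegaocn)^2$ satisfies $\mu g - \Delta g + \nablaH \rho = f$ with $\divH \bar{g} = 0$ and homogeneous Dirichlet conditions on $\Gao \cup \Gab$. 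The smoothness of $\varphi$ together with the ellipticity of the stationary hydrostatic Stokes system ensures that $v$ is smooth enough up to the boundary to make the subsequent manipulations rigorous.

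The heart of the argument is to evaluate $\langle \Llambda{\mu}\varphi, f \rangle = \langle v, (\mu - \Anp) g \rangle$ directly. Any hydrostatic pressure gradient $\nablaH \zeta$ with $\zeta$ independent of $z$ pairs trivially with hydrostatic solenoidal fields: for $v \in \rLqocn$, integration by parts in $(x,y)$ followed by vertical integration yields $\langle v, \nablaH \zeta \rangle = -\hocn \langle \divH \vbar, \zeta \rangle_{\R^2} = 0$, so $\langle v, \Pocn \Delta g \rangle = \langle v, \Delta g \rangle$ and $\langle \nablaH \pi, g \rangle = 0$. Green's second identity applied to $v$ and $g$ on $\Omegaocn$ then collapses all boundary contributions to the single interface term $\langle \varphi, \ddnp g \rangle_{\R^2}$, because $g$ vanishes on both $\Gao$ and $\Gab$, $v$ vanishes on $\Gab$, and $\trGao v = \varphi$. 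Substituting $\Delta v = \mu v + \nablaH \pi$ gives $\langle \Delta v, g \rangle = \mu \langle v, g \rangle$, so after cancelling the $\mu \langle v, g \rangle$-terms one arrives at
\begin{equation*}
    \langle \Llambda{\mu}\varphi, f \rangle = \langle \varphi, \ddnp R(\mu, \Anp) f \rangle_{\R^2},
\end{equation*}
with the sign consistent with the orientation of $\nGao$ used in the paper.

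Because $\varphi$ ranges over the dense subspace $\rCcinfty(\R^2)^2$, this identity shows that every $f \in \rLpocn$ lies in $D(\Llambda{\mu}')$ and $\Llambda{\mu}' f = \ddnp R(\mu, \Anp) f$. Boundedness of $\ddnp R(\mu, \Anp) \colon \rLpocn \to \rL^{q'}(\R^2)^2$ follows from the $\rH^{2,q'}$-regularity of $g$ furnished by \autoref{cor:Hinfty Stokes atm ocn}(b) together with continuity of the normal trace $\ddnp \colon \rH^{2,q'}(\Omegaocn)^2 \to \rL^{q'}(\R^2)^2$. The second equality $R(\mu, \Anq)' = R(\mu, \Anp)$ is then immediate from $(\Anq)' = \Anp$ and the adjoint resolvent formula $R(\mu, T)' = R(\bar{\mu}, T')$ evaluated at the real point $\mu$. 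I expect the main technical obstacle to be the rigorous justification of Green's identity on the unbounded layer $\Omegaocn$, since the domain has infinite horizontal extent; compact support of $\varphi$ and the $\rLp$-integrability of $f$ provide enough decay in $(x,y)$ of $v$ and $g$, respectively, for a standard truncation-and-limit argument to legitimate the formal integrations by parts.
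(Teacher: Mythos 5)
Your proposal is correct and follows essentially the same route as the paper: the identity is obtained by pairing $\Llambda{\mu}\varphi$ with $(\mu-\Anp)^{-1}k$, observing that hydrostatic pressure gradients annihilate hydrostatic solenoidal fields, and applying Green's second identity so that only the interface term $\langle\varphi,\dz g|_{\Gao}\rangle$ survives. Your additional remarks on the density of $\rCcinfty(\R^2)^2$, the boundedness of $\ddnp R(\mu,\Anp)$, and the truncation argument needed to justify Green's identity on the unbounded layer are sensible refinements of the same argument rather than a different approach.
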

\begin{proof}	
    Let $\varphi \in \rC_\per^\infty(\Gao)^2$ and $k \in \rLpocn $ with $\nicefrac{1}{q}+\nicefrac{1}{q'} = 1$, so $(\Anq)' = \Anp$.
	We set $f := \Llambda{0} \varphi$ and $g := (\Anp)^{-1} k$.
	Then we have $f \in D(\Amocn) \subset \rLqocn$, $g \in D(\Anp) \subset \rLpocn$ with $\nicefrac{1}{q}+\nicefrac{1}{q'} = 1$, and it follows therefrom, also invoking the periodic boundary conditions on the lateral boundary, that
	\begin{equation*}
		\scalarprodLtwo{\nablaH \pi}{g}{\Omegaocn}
		= \int_G \int_{-\hocn}^0 \nablaH \pi \cdot g \d z \d \xH
		= \hocn \intG{\nablaH \pi \cdot \bar{g}}
		= -\hocn \intG{\pi \cdot \divH \bar{g}} = 0,
	\end{equation*}
    and likewise $\scalarprodLtwo{f}{\nablaH \pi}{\Omegaocn} = 0$.
	Green's second identity and the horizontal periodicity then imply that
	\begin{equation*}
	    \begin{aligned}
        &\scalarprodLtwo{\Delta f + \nablaH \pi}{g}{\Omegaocn}
        -\scalarprodLtwo{f}{\Delta g + \nablaH \pi}{\Omegaocn}\\
		=\,& \scalarprodLtwo{\Delta f}{g}{\Omegaocn}
        - \scalarprodLtwo{f}{\Delta g}{\Omegaocn}\\
		=\, &\intG{\dz f(\xH,0) g(\xH,0)}
		+ \intG{\dz f(\xH,-\hocn) g(\xH,-\hocn)} \\
		&-\intG{f(\xH,0) \dz g(\xH,0)}
		- \intG{f(\xH,-\hocn) \dz g(\xH,-\hocn)}.	        
	    \end{aligned}
	\end{equation*}

	Since $g \in D(\Anp)$, we have $g(\xH,0) = g(\xH,-\hocn) = 0$, and it also holds that $f(\xH,-\hocn) = 0$ as well as $f(\xH,0) = \varphi$.
    This yields that
    \begin{equation*}
        \scalarprodLtwo{f}{\Delta g + \nablaH \pi}{\Omegaocn} = \scalarprodLtwo{\Delta f + \nablaH \pi}{g}{\Omegaocn}
        + \intG{\varphi(\xH) \partial_z g(\xH,0)}.
    \end{equation*}

    Making use of the latter identity in conjunction with the definition of the hydrostatic Stokes operator as in \autoref{ssec:hydrostatic Helmholtz and Stokes} as well as $\Amocn f = 0$ on $\Omegaocn$ by construction, we derive that
    \begin{equation*}
        \begin{aligned}
            \scalarprodLtwo{\Llambda{0} \varphi}{k}{\Omegaocn} &= \scalarprodLtwo{f}{\Anp g}{\Omegaocn} = \scalarprodLtwo{f}{\Delta g + \nablaH \pi}{\Omegaocn}\\
            &= \scalarprodLtwo{\Delta f + \nablaH \pi}{g}{\Omegaocn} + \intG{\varphi(\xH) \partial_z g(\xH,0)}\\
            &= \scalarprodLtwo{\varphi}{(\dz ((\Anp)^{-1}k))|_{\Gamma_u}}{\Omegaocn},
        \end{aligned}
    \end{equation*}
	so the claim follows. 
\end{proof}

The next lemma guarantees that the right-hand side of \eqref{eq:adjoint L0} can be extended to a bounded operator.

\begin{lem}\label{lem:ddn A0 bdd}
	The normal derivative $\ddnr{r}$ is relatively $(-\Anr{r})^\delta$-bounded with
	$\delta > \frac{1}{2}+\frac{1}{2r}$.
\end{lem}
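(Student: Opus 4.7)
The plan is to combine the bounded $\Hinfty$-calculus of the oceanic hydrostatic Stokes operator $-\Anr{r}$, which yields an interpolation characterization of its fractional power domains as boundary-adapted Bessel potential spaces, with the classical trace theorem for the vertical derivative on the layer domain $\Omegaocn$.

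Concretely, I would first apply \autoref{cor:functional analytic properties hydrostatic Stokes}(b),(e) with $q = r$ in the full Dirichlet configuration $\GaD = \Gao \cup \Gab$ prescribed by \eqref{eq:boundary conditions A0}. For $\delta \in (0,1)$ away from the exceptional exponents $\nicefrac{1}{2r}$ and $\nicefrac{1}{2}+\nicefrac{1}{2r}$, this identifies
\begin{equation*}
    D\bigl((-\Anr{r})^{\delta}\bigr) \simeq \rHsqbc{2\delta}(\Omegaocn)^2 \cap \rLrocn{r},
\end{equation*}
together with the norm equivalence $\|v\|_{\rH^{2\delta,r}(\Omegaocn)^2} \le C \bigl(\|(-\Anr{r})^{\delta} v\|_{\rLrocn{r}} + \|v\|_{\rLrocn{r}}\bigr)$, where the Dirichlet traces on $\Gao$ and $\Gab$ are retained as soon as $\delta > \nicefrac{1}{2r}$.

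Next, I would invoke the classical boundary trace theorem on the layer domain $\Omegaocn$. For $v \in \rH^{2\delta,r}(\Omegaocn)^2$, the vertical derivative satisfies $\dz v \in \rH^{2\delta-1,r}(\Omegaocn)^2$, and once the regularity exponent exceeds the trace threshold $2\delta-1 > \nicefrac{1}{r}$, the boundary trace extends continuously as
\begin{equation*}
    \tr_{\Gao}\colon \rH^{2\delta-1,r}(\Omegaocn)^2 \to \rBrpq{2\delta-1-\nicefrac{1}{r}}{r}{r}(\R^2)^2 \hookrightarrow \rL^r(\R^2)^2,
\end{equation*}
see e.g.\ \cite[Section~2.7]{Tri:78}. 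Since $\ddnr{r} v = \tr_{\Gao}(\dz v)$ by construction, concatenating the two estimates yields
\begin{equation*}
    \|\ddnr{r} v\|_{\rL^r(\R^2)^2} \le C\bigl(\|(-\Anr{r})^{\delta} v\|_{\rLrocn{r}} + \|v\|_{\rLrocn{r}}\bigr) \qandq v \in D\bigl((-\Anr{r})^{\delta}\bigr),
\end{equation*}
which is precisely the asserted relative $(-\Anr{r})^{\delta}$-boundedness of $\ddnr{r}$.

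The main delicate point is the identification of the fractional power domain $D((-\Anr{r})^{\delta})$ with a Bessel potential space that carries the correct Dirichlet conditions on both $\Gao$ and $\Gab$; this rests on the $\sR$-bounded $\Hinfty$-calculus of \autoref{cor:functional analytic properties hydrostatic Stokes}(a) together with the interpolation characterization of part~(e), and it is crucial that $\delta$ is chosen in an admissible range avoiding the two critical exponents. Once this identification is in place, the remainder of the argument is a direct concatenation of continuous embeddings on the layer domain, and no compatibility between the two boundary components of $\partial \Omegaocn$ needs to be examined since the trace is taken only on $\Gao$.
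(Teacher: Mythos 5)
Your argument is correct and is essentially the paper's own proof: the paper likewise identifies the fractional power domains of the (shifted) operator with boundary-adapted Bessel potential spaces via \autoref{cor:functional analytic properties hydrostatic Stokes}(b),(e), and then applies the mapping property of $\dz$ on the layer domain; the only difference is that it quotes \autoref{lem:trace and normal derivative on layers}(b) where you cite the classical trace theorem directly.

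One mismatch you should have flagged: your proof (exactly like the paper's) yields the relative bound precisely for $\delta > \nicefrac{1}{2}+\nicefrac{1}{2r}$, namely once $2\delta-1$ exceeds the trace threshold $\nicefrac{1}{r}$, while the lemma as printed claims it for $\delta < \nicefrac{1}{2}+\nicefrac{1}{2r}$. The printed inequality cannot be the intended one: the embedding $D((-\Anr{r})^{\delta}) \hra D(\ddnr{r})$ genuinely requires $2\delta - 1 > \nicefrac{1}{r}$, since for smaller $\delta$ the space $\rH^{2\delta,r}(\Omegaocn)^2 \cap \rLrocn{r}$ contains functions whose vertical derivative has no $\rL^r$-trace on $\Gao$. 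The direction of the inequality is thus a typo in the statement (the same reversal recurs in \autoref{cor:N AH bdd} and in the proof of \autoref{lem:J_1 Hinfty calculus}); what the later perturbation and decoupling arguments actually need is only that the threshold $\nicefrac{1}{2}+\nicefrac{1}{2r}$ is strictly smaller than $1$, so that $\ddnr{r}$ is of relative order strictly less than one. Your closing claim that your estimate is ``precisely the asserted'' statement glosses over this discrepancy.
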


\begin{proof}
    From \cite[Theorem~VIII.1.3.1]{Ama:19}, establishing the result for the half-space, and the observation that horizontally periodic functions can be extended periodically onto a layer and then cut off, we deduce for  $r > 1 + \nicefrac{1}{q}$ the embedding $\rHsqper{r}(\Omegaocn)^2 \cap \rLqocn \hra  D(\ddnq)$.
    The claim is then a consequence of \autoref{cor:Hinfty Stokes atm ocn}, implying that $-\An$ has bounded imaginary powers, see also the relation \eqref{eq:rel Hinfty BIP RS}.
\end{proof}

Combining \autoref{lem:L_0 adjoint} and \autoref{lem:ddn A0 bdd}, we conclude that $\Llambda{0}'$ can be extended to a bounded operator from $\rL^{q'}_{\sigmabar}(\Omegaocn)$ to $\rL^{q'}(G)^2$. 
We then get the following result.

\begin{prop}\label{prop:L_0 existence}
	The operator $\Llambda{0}$ can be extended to a bounded operator from $\rLq(G)^2$ to $\rLqocn$ which will be denoted again by $\Llambda{0}$ for the sake of simplicity. 
	In particular, the unique solution $v$ of \eqref{eq:inhomogeneous hydrostatic Stokes operator} satisfies the a-priori estimate $\| \vocn \|_{\rLqocn} \leq C \| \varphi \|_{\rLq(G)}$.
\end{prop}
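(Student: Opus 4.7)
My plan is a duality argument combining the adjoint representation from \autoref{lem:L_0 adjoint} with the relative boundedness of the normal derivative from \autoref{lem:ddn A0 bdd}. Concretely, I would show that $\Llambda{\mu}'$ extends to a bounded operator on all of its codomain, and then transfer this boundedness to $\Llambda{\mu}$ via the classical duality between a densely defined operator and its adjoint.

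The starting point is that, by \autoref{lem:L_0 adjoint}, on its domain the adjoint is given by $\Llambda{\mu}' = \ddnp R(\mu,\Anp)$. To bound this composition, I would fix $\delta$ with $\tfrac{1}{2} < \delta < \tfrac{1}{2}+\tfrac{1}{2q'}$; note in particular that $\delta < 1$. Because $\Anp - \mu$ admits a bounded $\Hinfty$-calculus by \autoref{cor:Hinfty Stokes atm ocn}, after the shift by $\mu$ the operator $-\Anp+\mu$ is invertible and sectorial and has well-defined fractional powers, so we may factor
\[
	\ddnp R(\mu,\Anp) = \bigl[\ddnp (-\Anp+\mu)^{-\delta}\bigr] \circ (-\Anp+\mu)^{\delta-1}.
\]
The first factor is bounded from $\rLpocn$ to $\rL^{q'}(\R^2)^2$ by the relative boundedness in \autoref{lem:ddn A0 bdd}, and the second, being a negative fractional power of a sectorial operator with bounded $\Hinfty$-calculus, is bounded on $\rLpocn$. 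Hence $\Llambda{\mu}'$ extends to a bounded operator from $\rLpocn$ into $\rL^{q'}(\R^2)^2$.

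To conclude, I would invoke the standard duality statement: a densely defined operator whose adjoint extends boundedly to the whole dual space extends boundedly to the original space with the same norm. Applied to $\Llambda{\mu}$ on $\rCcinfty(\R^2)^2 \subset \rLq(\R^2)^2$, this yields the desired extension $\Llambda{\mu} \colon \rLq(\R^2)^2 \to \rLqocn$, and the a-priori estimate follows. I do not expect any real obstacle here: the two preparatory lemmas together with the $\Hinfty$-calculus of the homogeneous hydrostatic Stokes operator already contain all the analytic content, and the remainder is bookkeeping through the adjoint identity.
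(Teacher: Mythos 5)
Your proposal is correct and follows essentially the same route as the paper: the paper likewise obtains boundedness of $\Llambda{\mu}'$ from $\rLpocn$ to $\rL^{q'}(\R^2)^2$ by combining \autoref{lem:L_0 adjoint} with \autoref{lem:ddn A0 bdd} and then passes back to $\Llambda{\mu}$ by duality. Your factorization through $(-\Anp+\mu)^{\delta-1}$ with $\tfrac12<\delta<\tfrac12+\tfrac1{2q'}$ just makes explicit the step the paper leaves implicit.
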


As we see in the next corollary, more regularity of the boundary data implies more regularity of the solution. 

\begin{cor}\label{cor:L_0 regularity}
	Let $q \in (1,\infty)$ and $r < \nicefrac{1}{q}$.
	If $\varphi \in \rLq(G)^2$, then the unique solution $\vocn$ of \eqref{eq:inhomogeneous hydrostatic Stokes operator} satisfies $\vocn \in \rHsqper{r}(\Omegaocn)^2 \cap \rLqocn$.
	In addition, for $s > 0$ and $\varphi \in \rHsqper{s}(G)^2$, the unique solution $\vocn$ of \eqref{eq:inhomogeneous hydrostatic Stokes operator} satisfies $\vocn \in \rHsqper{s+r}(\Omegaocn)^2 \cap \rLqocn$. 
    Furthermore, for $s > 0$ and $\varphi \in \rBsqpper{s}(G)^2$, the unique solution $\vocn$ of \eqref{eq:inhomogeneous hydrostatic Stokes operator} satisfies $\vocn \in \rBsqpper{s+r}(\Omegaocn)^2 \cap \rLqocn$.
    
	The operator $\Llambda{0}$ is also bounded from $\rHsqper{s}(G)^2$ to $\rHsqper{s}(\Omegaocn)^2$ and from $\rBsqpper{s}(G)^2$ to $\rBsqpper{s}(\Omegaocn)^2$, i.\ e.,  the solution $\vocn$ of \eqref{eq:inhomogeneous hydrostatic Stokes operator} satisfies the a-priori bounds $\| \vocn \|_{\rHsqper{s}(\Omegaocn)} \leq C \| \varphi \|_{\rHsqper{s}(G)}$ as well as $\| \vocn \|_{\rBsqpper{s}(\Omegaocn)^2} \leq C \| \varphi \|_{\rBsqpper{s}(G)}$.
\end{cor}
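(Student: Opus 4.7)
The argument rests on the adjoint identity $\Llambda{\mu}' = \ddnp R(\mu, \Anp)$ from \autoref{lem:L_0 adjoint}, lifted to the scale of fractional-power spaces of $-\Anp$. The smoothing budget is clean: $R(\mu, \Anp)$ gains two Bessel degrees by \autoref{cor:functional analytic properties hydrostatic Stokes}(e), while the normal trace $\ddnp$ costs $1 + \nicefrac{1}{q'}$ degrees by \autoref{lem:ddn A0 bdd} and the trace theorem for layer domains, leaving a net gain of exactly $\nicefrac{1}{q}$, which is precisely the regularity claimed.

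For the base case $\varphi \in \rLq(\R^2)^2$, I would factor
\begin{equation*}
\Llambda{\mu}' \;=\; [\ddnp (\mu - \Anp)^{-\delta}] \circ (\mu - \Anp)^{\delta - 1}
\end{equation*}
with $\delta$ slightly above the trace threshold $\nicefrac{1}{2} + \nicefrac{1}{2q'}$. By \autoref{lem:ddn A0 bdd} the first factor is bounded from $\rL^{q'}(\Omegaocn)$ into $\rL^{q'}(\R^2)^2$, while the fractional-power identification $D((\mu - \Anp)^\alpha) \simeq \rH^{2\alpha,q'}_{\bc}(\Omegaocn)^2$ from \autoref{cor:functional analytic properties hydrostatic Stokes}(e), extended to negative exponents, produces a bounded map $\rH^{-r,q'}_{\bc}(\Omegaocn)^2 \to \rL^{q'}(\Omegaocn)$ for every $r < \nicefrac{1}{q}$. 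Dualising the composition yields $\Llambda{\mu}\colon \rLq(\R^2)^2 \to \rH^{r,q}(\Omegaocn)^2$ for every $r < \nicefrac{1}{q}$; the endpoint $r = \nicefrac{1}{q}$ is then reached by real interpolation between adjacent levels on the Bessel scale, giving $v \in \rHsq{\nicefrac{1}{q}}(\Omegaocn)^2$.

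To promote this to $\varphi \in \rHsq{s}$ or $\rBsqp{s}$ with $s > 0$, I would invoke a commutation argument. Since \eqref{eq:inhomogeneous hydrostatic Stokes operator} is translation invariant in the horizontal variables, $\Pocn$ is itself a horizontal Fourier multiplier by \eqref{eq:hydrostatic helmholtz}, and the boundary conditions act only in the $z$ variable, the operator $\Llambda{\mu}$ commutes with the horizontal Bessel potential $(I - \DeltaH)^{s/2}$ and its Besov analog. Applying the base case to $(I - \DeltaH)^{s/2} \varphi \in \rLq(\R^2)^2$ furnishes $s + \nicefrac{1}{q}$ horizontal derivatives of $v$ in $\rLq$, and the matching vertical regularity is extracted from the equation $(\mu - \Amocn) v = 0$, which expresses $\dz^2 v$ in terms of horizontal derivatives of $v$ and $v$ itself; iterating then yields $v \in \rHsq{s+\nicefrac{1}{q}}(\Omegaocn)^2$. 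The Besov statement follows by the identical scheme on Besov scales, the strict loss $r < \nicefrac{1}{q}$ reflecting the Bessel-versus-Besov mismatch inherent in the trace theorem. The a priori inequalities are then a direct consequence of the closed graph theorem applied to these mapping properties together with the embedding $\rHsq{s+\nicefrac{1}{q}} \hookrightarrow \rHsq{s}$ and its Besov analog.

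The principal technical obstacle I anticipate is twofold: first, the careful bookkeeping of boundary conditions through the extrapolated fractional-power identification, in particular near the critical exponents $\nicefrac{1}{2q'}$ and $\nicefrac{1}{2} + \nicefrac{1}{2q'}$ excluded in \autoref{cor:functional analytic properties hydrostatic Stokes}(e); second, the rigorous intertwining of horizontal Fourier multipliers with the nonlocal pressure through the solenoidal constraint encoded in $\Pocn$, which relies crucially on the explicit splitting \eqref{eq:hydrostatic helmholtz}.
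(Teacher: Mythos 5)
Your route (adjoint identity plus fractional powers, then horizontal translation invariance) is genuinely different from the paper's, which instead observes that $\Pocn\Delta v=0$ forces $\Delta v=\nablaH\pi\in\rLq(\Omegaocn)^2$, so that $v$ solves an inhomogeneous Dirichlet problem for $\Delta-\mu$ with boundary datum $\varphi$, and then imports elliptic regularity for that problem together with the explicit pressure formula $\nablaH\pi=\hocn^{-1}\nablaH\DeltaH^{-1}\divH\ddnGao v$ to bootstrap and to run a Calder\'on--Zygmund induction for the a priori bounds. However, your argument has a genuine gap at the very first step: knowing that $\Llambda{\mu}$ maps $\rLq(\R^2)^2$ boundedly into $\rHsq{r}(\Omegaocn)^2$ for every $r<\nicefrac{1}{q}$ does \emph{not} yield the endpoint $r=\nicefrac{1}{q}$ ``by real interpolation between adjacent levels''. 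Real or complex interpolation between two spaces $\rHsq{r_0}$ and $\rHsq{r_1}$ with $r_0,r_1<\nicefrac{1}{q}$ produces spaces of order strictly below $\nicefrac{1}{q}$; one can never exceed the supremum of the orders one already has, and $\bigcap_{r<\nicefrac{1}{q}}\rHsq{r}$ strictly contains $\rHsq{\nicefrac{1}{q}}$. The endpoint in the corollary comes from the sharp regularity theory of the Dirichlet problem with $\rLq$ boundary data, not from an interpolation limit. A secondary, fixable but nontrivial issue is that your duality step requires identifying negative-order extrapolated fractional power domains $D((\mu-\Anp)^{-r/2})$ with $\rHsq{-r}$-type spaces and dualising through the boundary conditions; nothing in \autoref{cor:functional analytic properties hydrostatic Stokes} covers negative exponents, so this would have to be established separately.

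For the higher-order step, the commutation of $\Llambda{\mu}$ with $(I-\DeltaH)^{s/2}$ is legitimate (the layer, the operator and the boundary conditions are all invariant under horizontal translations), but the assertion that the equation ``expresses $\dz^2 v$ in terms of horizontal derivatives of $v$ and $v$ itself'' is not accurate: $(\mu-\Delta)v=-\nablaH\pi$, and the pressure is a nonlocal functional of the boundary trace of $\dz v$, namely $\nablaH\pi=\hocn^{-1}\nablaH\DeltaH^{-1}\divH\ddnGao v$. You flag this as an anticipated obstacle, but it is exactly the point where the regularity count must close, and the paper resolves it by noting that this zeroth-order pseudo-differential expression gains regularity at each bootstrap stage. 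As written, your vertical-regularity iteration does not close without this input, so both the endpoint claim and the bootstrap need to be repaired before the argument is complete.
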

\begin{proof}
	First, we consider $s \in (0,2-\nicefrac{1}{q}]$ and $\varphi \in \rHsqper{s}(G)^2$.
	From \autoref{prop:L_0 existence} it follows that there exists a unique solution $v \in \rLqocn$ to \eqref{eq:inhomogeneous hydrostatic Stokes operator} such that $\Pocn \Delta \vocn = \Amocn \vocn = 0 \in \rLqocn$.
	Surjectivity of the hydrostatic Helmholtz projection $\Pocn \colon \rLq(\Omegaocn)^2 \to \rLqocn$ yields $\Delta \vocn \in \rLq(\Omegaocn)^2$. 
	In particular, $\vocn$ solves the inhomogeneous Laplace problem given by
	\begin{equation}
		\left\{
		\begin{aligned}
			\Delta \vocn &= \nablaH \pi, &&\onOmegaocn, \\
			\trGao \vocn &= \varphi, &&\on{\Gao},\\
			\trGab \vocn &= 0, &&\on{\Gab} .
		\end{aligned}
		\right.
		\label{eq:inhomogen Laplace}
	\end{equation}
	
	It follows from $\Delta \vocn \in \rLq(\Omegaocn)^2$ that the right-hand side satisfies $\nablaH \pi \in \rLq(\Omegaocn)^2$.
	Standard regularity theory of the Laplacian implies $\vocn \in \rHsqper{s+r}(\Omegaocn)^2$ with $r < \nicefrac{1}{q}$.
	
	Assume now $s \geq 2 - \nicefrac{1}{q}$ and $\varphi \in \rHsqper{s}(G)^2$. 
	By the above argument, we have $\vocn \in \rHsqper{2}(\Omegaocn)^2 \cap \rLqocn$.
	Using the hydrostatic solenoidal condition, we obtain by a direct calculation similar to \cite[(4.3)]{GGHHK:17} that the pressure term is given by
	\begin{equation}
		\nablaH \pi = \frac{1}{\hocn} \nablaH \DeltaH^{-1} \divH \ddnGao v .
		\label{eq:pressure 2}
	\end{equation}
	
	We conclude $\nablaH \pi \in \rHsqper{1-\nicefrac{1}{q}}(G)^2 \hra \rHsqper{1-\nicefrac{1}{q}}(\Omegaocn)^2$.
	The general case $s \in \R_+$ follows by a bootstrap argument.  
	
	Note that for $s = 0$, the a-priori bound is shown in \autoref{prop:L_0 existence}.
	We first prove the a-priori bound for $s \in \N$ with $s \geq 2$. The proof uses induction over $s \in \{r \in \N \colon r \geq 2\}$.
	Using classical Calder\'on-Zygmund theory, we obtain from \eqref{eq:inhomogen Laplace} that
	\begin{equation}
		\| \vocn \|_{\rHsqper{2}(\Omegaocn)}
		\leq C ( \| \varphi \|_{\rHsqper{2}(G)} + \| \nablaH\pi \|_{\rLq(G)} ).
		\label{eq:calderon-zygmund}
	\end{equation}
	
	From \eqref{eq:pressure 2}, the boundedness of the normal derivative by \autoref{lem:ddn A0 bdd}, the interpolation inequality, Young's inequality and the a-priori estimate on $\rLq$ as in \autoref{prop:L_0 existence}, it follows that for all $r > 1 + \nicefrac{1}{q}$, $\delta = \frac{r}{2} \in (0,1)$ and $\eta > 0$,
	\begin{equation*}
		\begin{aligned}
			\| \nablaH \pi \|_{\rLq(G)}
			&\leq C \| \ddnGao \vocn \|_{\rLq(G)} \\
			&\leq C \| \vocn \|_{\rHsqper{r}(\Omegaocn)} \\
			&\leq C \| \vocn \|_{\rHsqper{2}(\Omegaocn)}^{\delta}
			\cdot \| \vocn \|_{\rLq(\Omegaocn)}^{1-\delta} \\
			&\leq C \delta \eta^{\nicefrac{1}{\delta}} \| \vocn \|_{\rHsqper{2}(\Omegaocn)}
			+ C (1-\delta) \eta^{-\nicefrac{1}{(1-\delta)}} \| \varphi \|_{\rLq(G)}. 
		\end{aligned} 
	\end{equation*} 
	
	Plugging this estimate into \eqref{eq:calderon-zygmund}, we obtain the desired $\rHsq{2}$-a-priori bound by an absorption argument.
	
	Assume that the a-priori bound still holds for $s-1$. 
	Replacing the Calder\'on-Zygmund a-priori estimate~\eqref{eq:calderon-zygmund} by its higher-order analogue and using the higher-order bounds for the normal derivative and the fact that the pseudo-differential operator $\nablaH\DeltaH^{-1}\divH$ of order $0$ is bounded on $\rHsqper{s}(G)^2$, we deduce the induction step from the same argument as the induction base.
 
	The general case $s \in \R_+$ follows by interpolation, and the additional regularity of the solution in the Besov space scale follows from the embeddings $\rBsqpper{s}(G) \hra \rHsqper{s-\delta}(G)$ and $\rHsqper{s+\nicefrac{1}{q}}(\Omegaocn) \hra \rBsqpper{s+\nicefrac{1}{q}-\delta}(\Omegaocn)$ for $\delta > 0$, and the corresponding results for Bessel potential spaces.
	Finally, the boundedness of $\Llambda{0}$ between Besov spaces follows from the boundedness of $\Llambda{0}$ between $\rHsqper{k}(G)^2$ and $\rHsqper{k}(\Omegaocn)^2$ for $k \in \N_0$ by real 
interpolation.
\end{proof}

Combing $0 \in \rho(\An)$ from \autoref{cor:Hinfty Stokes atm ocn} with \autoref{prop:L_0 existence} and using the linearity of the hydrostatic Stokes operator, we obtain that the general inhomogeneous hydrostatic Stokes problem given by
\begin{equation*}
	\left\{
	\begin{aligned} 
		\Amocn \vocn &= f, &&\onOmegaocn, \\
		\trGao \vocn &= \varphi, &&\on \Gao, \\
		\trGab \vocn &= 0, &&\on \Gab, 
	\end{aligned} 
	\right.
\end{equation*}
for $f \in \rLqocn$ and $\varphi \in \rLq(G)^2$, admits a unique solution $\vocn \in D(\Am)$, with $\vocn = -R(0,\An)f + \Llambda{0} \varphi$. 

\begin{rem}\label{rem:andere RBs L0}
	Analogous results about stationary hydrostatic Stokes problems for pure Dirichlet or pure Neumann boundary conditions can be shown by the same technique. 
    First of all, note that it is sufficient to consider only inhomogeneous boundary conditions, since the general case follows from linearity of the problem.
	The case of pure (inhomogeneous) Neumann boundary conditions is treated in \cite[Section~3]{BHHS:22}.
\end{rem}

\subsection{The hydrostatic Dirichlet-to-Neumann operator}
\label{ssec:Dirichlet-to-Neumann operator}

\

The hydrostatic Dirichlet-to-Neumann operator on $\rLq(G)^2$ is given by the composition of the normal derivative and the Dirichlet operator, i.\ e.,
\begin{equation*}
	\Nlambda{0} \varphi := \ddnq \Llambda{0} \varphi, 
	\quad D(\Nlambda{0}) := \left\{ \varphi \in \rLq(G)^2 \colon \Llambda{0} \varphi \in D(\ddnq) \right\} .
\end{equation*}

Using the regularity theory for the inhomogeneous stationary Stokes problem, we obtain the following result about the domain of the Dirichlet-to-Neumann operator.

\begin{prop}\label{prop: D(N)}
	The domain of the Dirichlet-to-Neumann operator contains $\rHsqper{s}(G)^2$ for all $s > 1$. 
\end{prop}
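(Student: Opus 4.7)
The plan is to unpack the definition of $D(\Nlambda{\mu})$ and combine it with the regularity statement for $\Llambda{\mu}$ from \autoref{cor:L_0 regularity} and the embedding into $D(\ddnq)$ observed in (the proof of) \autoref{lem:ddn A0 bdd}. More precisely, given $\varphi \in \rHsq{s}(\R^2)^2$ with $s > 1$, I would first invoke the second part of \autoref{cor:L_0 regularity} to obtain that
\begin{equation*}
    \Llambda{\mu} \varphi \in \rHsq{s+\nicefrac{1}{q}}(\Omegaocn)^2 \cap \rLqocn,
\end{equation*}
together with the a-priori bound $\| \Llambda{\mu} \varphi \|_{\rHsq{s+\nicefrac{1}{q}}(\Omegaocn)^2} \le C \| \varphi \|_{\rHsq{s}(\R^2)^2}$.

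Next, I would appeal to the embedding $\rHsq{r}(\Omegaocn)^2 \cap \rLqocn \hookrightarrow D(\ddnq)$, valid for all $r > 1 + \nicefrac{1}{q}$, which was used within the proof of \autoref{lem:ddn A0 bdd} via the trace result on layer domains. Since $s > 1$, we have $s + \nicefrac{1}{q} > 1 + \nicefrac{1}{q}$, so this embedding applies with $r = s + \nicefrac{1}{q}$ and yields $\Llambda{\mu} \varphi \in D(\ddnq)$. By the very definition of $D(\Nlambda{\mu})$, this means $\varphi \in D(\Nlambda{\mu})$, which gives the desired inclusion $\rHsq{s}(\R^2)^2 \subset D(\Nlambda{\mu})$.

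I do not expect any substantial obstacle: the work has already been done, and this statement is essentially a composition of two continuous maps, namely the hydrostatic Dirichlet map $\Llambda{\mu} \colon \rHsq{s}(\R^2)^2 \to \rHsq{s+\nicefrac{1}{q}}(\Omegaocn)^2 \cap \rLqocn$ and the normal trace $\ddnq \colon \rHsq{s+\nicefrac{1}{q}}(\Omegaocn)^2 \cap \rLqocn \to \rLq(\R^2)^2$. If desired, one may additionally record the resulting a-priori bound $\| \Nlambda{\mu} \varphi \|_{\rLq(\R^2)^2} \le C \| \varphi \|_{\rHsq{s}(\R^2)^2}$, which comes for free from concatenating the two continuity estimates and shows that $\Nlambda{\mu}$ is bounded from $\rHsq{s}(\R^2)^2$ into $\rLq(\R^2)^2$ for every $s > 1$.
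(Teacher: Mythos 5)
Your argument is correct and coincides with the paper's own proof: both combine the regularity statement $\Llambda{\mu}\varphi \in \rHsq{s+\nicefrac{1}{q}}(\Omegaocn)^2 \cap \rLqocn$ from \autoref{cor:L_0 regularity} with the inclusion $\rHsq{r}(\Omegaocn)^2 \cap \rLqocn \subset D(\ddnq)$ for $r > 1+\nicefrac{1}{q}$ coming from the trace lemma, and then apply the definition of $D(\Nlambda{\mu})$. The additional a-priori bound you record is a harmless bonus not stated in the paper's proof.
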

\begin{proof}
	As in the proof of \autoref{lem:ddn A0 bdd}, we infer that $\rHsqper{r}(\Omegaocn)^2 \cap \rLqocn \subset D(\ddnq)$ for $r > 1+\frac{1}{q}$. 
	\autoref{cor:L_0 regularity} yields $\Llambda{0} \rHsqper{s}(G)^2 \subset \rHsqper{r}(\Omegaocn)^2 \cap \rLqocn \subset D(\ddnq)$ for $s > 1$, i.\ e.,  $\rHsqper{s}(G)^2 \subset D(\Nlambda{0})$.
\end{proof}

It follows by \autoref{prop:MR and Hinfty AH} that $-\AH + \omega$ has bounded imaginary powers, see also \eqref{eq:rel Hinfty BIP RS}.
We deduce therefrom that $D((-\AH)^{\delta}) \hra \rHsqper{2\delta}(G)^2$ and then conclude the following result.

\begin{cor}\label{cor:N AH bdd} 
	The Dirichlet-to-Neumann operator $\Nlambda{0}$ is relatively $(-\AH)^\delta$-bounded for $\delta > \frac{1}{2}$.
\end{cor}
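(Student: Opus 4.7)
The plan is to exploit the defining factorization $\Nlambda{\mu} = -\ddnq \Llambda{\mu}$ by chaining the regularity of the hydrostatic Dirichlet operator from \autoref{cor:L_0 regularity} with the trace bound for $\ddnq$ from \autoref{lem:trace and normal derivative on layers}(b), and then converting the resulting Bessel-scale estimate into one on the fractional-power scale of $-\AH$ via the identification $D((-\AH(v_0)+\omega)^\delta) \simeq \rHsq{2\delta}(\R^2)^2$ supplied by \autoref{cor:functional analytic properties AH}(d).

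Concretely, I would fix $s > 0$ and recall from \autoref{cor:L_0 regularity} that $\Llambda{\mu} \colon \rHsq{s}(\R^2)^2 \to \rHsq{s+\nicefrac{1}{q}}(\Omegaocn)^2 \cap \rLqocn$ is bounded, with the corresponding a-priori bound. Whenever $s + \nicefrac{1}{q} > 1 + \nicefrac{1}{q}$, \autoref{lem:trace and normal derivative on layers}(b) yields that $\ddnq$ maps $\rHsq{s+\nicefrac{1}{q}}(\Omegaocn)^2 \cap \rLqocn$ continuously into $\rLq(\R^2)^2$, so composition gives $\|\Nlambda{\mu} \varphi\|_{\rLq(\R^2)^2} \le C \|\varphi\|_{\rHsq{s}(\R^2)^2}$. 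Setting $s = 2\delta$ and invoking \autoref{cor:functional analytic properties AH}(d) then upgrades this to
\[\|\Nlambda{\mu} \varphi\|_{\rLq(\R^2)^2} \le C \|(-\AH(v_0)+\omega)^\delta \varphi\|_{\rLq(\R^2)^2} + C \|\varphi\|_{\rLq(\R^2)^2},\]
which is precisely the desired relative $(-\AH)^\delta$-boundedness.

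The hard part is pinning down the admissible range of $\delta$. The trace threshold in \autoref{lem:trace and normal derivative on layers}(b) forces $s > 1$, hence $2\delta > 1$, and since $\Llambda{\mu}$ gains exactly $\nicefrac{1}{q}$ units of Bessel regularity, no sharpening of either individual factor pushes the exponent below $\tfrac12$; this is directly opposite to the claimed range $\delta < \tfrac12$. To try to reach the stated range, I would replace the Bessel chain by its Besov counterpart, using the sharper mapping $\Llambda{\mu} \colon \rBsqp{s}(\R^2)^2 \to \rBsqp{s+r}(\Omegaocn)^2 \cap \rLqocn$ for every $r < \nicefrac{1}{q}$ from \autoref{cor:L_0 regularity}, combined with refined trace results in the $\rBsqp{\cdot}$-scale that exploit the finer $p$-summability, and with the real-interpolation identification of fractional-power domains in \autoref{cor:functional analytic properties AH}(e); since the small-$\delta$ direction runs against the pointwise trace obstruction, the plausible reading compatible with the natural chain is that the relative Kato bound of $\Nlambda{\mu}$ with respect to $-\AH$ can be made strictly smaller than $\tfrac12$ (and indeed arbitrarily small) by applying Young's inequality to $\|(-\AH)^{1/2+\varepsilon}\varphi\| \le \varepsilon'\|(-\AH)\varphi\|+C_{\varepsilon'}\|\varphi\|$, which is the form in which the corollary actually enters the decoupling argument of \autoref{sec:decoupling}.
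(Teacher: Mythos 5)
Your chain is precisely the paper's own argument: the paper proves \autoref{prop: D(N)} by composing the regularity of $\Llambda{\mu}$ from \autoref{cor:L_0 regularity} with the normal-derivative bound of \autoref{lem:trace and normal derivative on layers}(b), and then deduces the corollary in one line from the embedding $D((-\AH)^\delta)\hra \rHsq{2\delta}(\R^2)^2$ supplied by \autoref{cor:functional analytic properties AH}. So, as far as the method is concerned, you have reproduced the intended proof.

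Your diagnosis of the admissible range is also correct, and it is the substantive point: this argument yields relative $(-\AH)^\delta$-boundedness exactly for $2\delta>1$, i.e.\ $\delta>\nicefrac{1}{2}$, and the range $\delta<\nicefrac{1}{2}$ printed in the corollary is not only unreachable by this chain but false under the standard notion of relative boundedness, since for $2\delta<1$ one has $D((-\AH+\omega)^\delta)\simeq \rHsq{2\delta,q}(\R^2)^2\not\subset D(\Nlambda{\mu})$ --- the hydrostatic Dirichlet-to-Neumann operator is genuinely of first order. The inequality in the statement is simply reversed (a typo), and the same reversal occurs in \autoref{lem:ddn A0 bdd} (which should read $\delta>\nicefrac{1}{2}+\nicefrac{1}{2r}$) and in the way both results are quoted in \autoref{lem:J_1 Hinfty calculus}. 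Consequently, you should drop the Besov-scale rescue attempt of your final paragraph: no refinement of the function-space scale can make a first-order boundary operator relatively bounded by a power strictly below $\nicefrac{1}{2}$ of a second-order operator, exactly as your ``trace obstruction'' instinct suggests. What the decoupling argument actually needs from this corollary is only relative $(-\AH)^\theta$-boundedness for \emph{some} $\theta<1$, so that the perturbation results of \cite{KW:04} and \cite{AH:22} apply; equivalently, by the moment inequality and Young's inequality as you observe, relative $(-\AH)$-boundedness with arbitrarily small relative bound. The statement you actually prove, valid for $\delta\in(\nicefrac{1}{2},1]$, delivers precisely this, so \autoref{sec:decoupling} goes through unchanged with the corrected exponent range.
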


\section{Decoupling}
\label{sec:decoupling}

In this section, we combine the results from \autoref{sec:Hibler}, \autoref{sec:pe} and \autoref{sec:stationary problem} to conclude maximal regularity of the operator matrix given in \eqref{eq:operator matrix} as well as Lipschitz estimates of the operator matrix given in \eqref{eq:operator matrix} and the nonlinearity \eqref{eq:nonlinearity}. 

\subsection{Maximal regularity and bounded \protect{$\Hinfty$}-calculus of the linearized operator matrix}
\label{sec:MR operator matrix}

\

Fix $(v_{\ice,0},h_0,a_0) \in \Vice$ and consider the operator matrix $\sA(v_{\ice,0},h_0,a_0)$. 
Throughout this section, we omit the fixed variable $(v_{\ice,0}, h_0,a_0)$ and denote the operator matrix by $\sA := \sA(v_{\ice,0}, h_0,a_0)$.
However, we impose the required regularity conditions on $(v_{\ice,0}, h_0,a_0)$ when necessary.
Note that $\sA$ does not have a diagonal domain. 
To deal with this issue, we follow the ideas from \cite{BtE:24}.
We introduce the operator matrix $\tsA \colon D(\tsA) \subset \sX_0 \to \sX_0$ given by
\begin{equation*}
	\begin{aligned}
		\tsA 
		&:= 
		\begin{pmatrix}
			\Aatm & 0 & 0 & 0 & 0 \\
			0 & \An + \Llambda{0} \Coi \ddnGao & - \Llambda{0} (\AH - \Coi \Nlambda{0}) & 0 & 0 \\
			0 & -\Coi  \ddnGao & \AH - \Coi \Nlambda{0} & \Bh & \Ba \\
			0 & 0 & 0 & \Deltah & 0 \\
			0 & 0 & 0 & 0 & \Deltaa 
		\end{pmatrix}, \\
		D(\tsA) &:= 
		D(\Aatm) \times D(\An) \times D(\AH) \times D(\DeltaH) \times D(\DeltaH) .
	\end{aligned}
\end{equation*}

The next lemma investigates the relationship between the operator matrices $\sA$ and $\tsA$. 

\begin{lem}\label{lem:aet}
	The operator matrices $\sA$ and $\tsA$ are isomorphic. 
\end{lem}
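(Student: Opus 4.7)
The plan is to exhibit an explicit bounded similarity transformation between the two operator matrices using the hydrostatic Dirichlet operator. Define $T \colon \sX_0 \to \sX_0$ by
\[
T(\vatm, \tilde\vocn, \uice, h, a) := (\vatm, \tilde\vocn + \Llambda{\mu}\uice, \uice, h, a).
\]
Boundedness of $T$ on $\sX_0$ and the existence of the bounded inverse $T^{-1}(\vatm, \vocn, \uice, h, a) = (\vatm, \vocn - \Llambda{\mu}\uice, \uice, h, a)$ are immediate from \autoref{prop:L_0 existence}, which ensures that $\Llambda{\mu} \colon \rLq(\R^2)^2 \to \rLqocn$ is a bounded linear map. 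In particular, $T$ is an isomorphism of $\sX_0$.

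The next step is to verify that $T$ restricts to a bijection $D(\tsA) \to D(\sA)$. The non-trivial direction is: if $(\vatm, \tilde\vocn, \uice, h, a) \in D(\tsA)$, then $\vocn := \tilde\vocn + \Llambda{\mu}\uice$ should satisfy $\vocn \in \rHsq{2}(\Omegaocn)^2 \cap \rLqocn$ together with $\trGab\vocn = 0$ and $\trGao\vocn = \uice$. Since $\uice \in D(\AH) \subset \rHsq{2}(\R^2)^2$, \autoref{cor:L_0 regularity} yields $\Llambda{\mu}\uice \in \rHsq{2}(\Omegaocn)^2 \cap \rLqocn$, so the required Sobolev regularity is present. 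The boundary identities follow directly from the stationary hydrostatic Stokes problem that defines $\Llambda{\mu}$, namely $\trGao\Llambda{\mu}\uice = \uice$ and $\trGab\Llambda{\mu}\uice = 0$. The converse, using $T^{-1}$, is analogous. Equivalence of graph norms on $D(\tsA)$ and $D(\sA)$ then follows from continuity of $T$ and $T^{-1}$.

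The crux of the argument is the intertwining relation $\sA T = T\tsA$ on $D(\tsA)$. Substituting $\vocn = \tilde\vocn + \Llambda{\mu}\uice$ into the entries of $\sA$, one invokes the two algebraic identities
\[
\Amocn\Llambda{\mu}\uice = \mu\Llambda{\mu}\uice \qandq \ddnGao\Llambda{\mu}\uice = -\Nlambda{\mu}\uice,
\]
the first following from $(\mu - \Amocn)\Llambda{\mu}\uice = 0$ and the second directly from the definition of the hydrostatic Dirichlet-to-Neumann operator. Thereby the second component $\Amocn\vocn$ becomes $\An\tilde\vocn + \mu\Llambda{\mu}\uice$, and the boundary term $-\rocn\Cocn\Rocn\ddnGao\vocn$ in the third row becomes $-\rocn\Cocn\Rocn\ddnGao\tilde\vocn + \rocn\Cocn\Rocn\Nlambda{\mu}\uice$. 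These corrections are exactly what the off-diagonal modifications in $\tsA$, namely the shift $-\mu$ together with $\Llambda{\mu}\rocn\Cocn\Rocn\ddnGao$ in entry $(2,2)$, the coupling term $-\Llambda{\mu}(\AH + \Nlambda{\mu})$ in entry $(2,3)$, and the $\Nlambda{\mu}$ correction in entry $(3,3)$, are designed to absorb.

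The main obstacle is the careful bookkeeping of these boundary-correction terms across the full $5 \times 5$ matrix; in particular, one has to keep track of how $\Llambda{\mu}$ applied to the third-component output interacts with the $\mu$-shift and the Dirichlet-to-Neumann correction. Once the identity $\sA T = T\tsA$ is established, together with boundedness of $T^{\pm 1}$, one concludes $\sA = T\tsA T^{-1}$, so the two operator matrices are similar. This similarity is then exactly what is needed in \autoref{sec:decoupling} to transfer the maximal regularity and bounded $\Hinfty$-calculus properties from the upper triangular decoupled matrix $\tsA$ back to the coupled $\sA$.
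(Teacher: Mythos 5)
Your proposal is correct and takes essentially the same route as the paper: the paper's proof exhibits the same similarity transform $\sS = T^{-1}$ (with $\mp\Llambda{\mu}$ in the $(2,3)$ entry), invokes \autoref{prop:L_0 existence} for the boundedness of $\sS^{\pm 1}$ on $\sX_0$, and concludes $\tsA = \sS\sA\sS^{-1}$ with $D(\tsA) = \sS D(\sA)$ by "a direct calculation". You additionally supply the substance of that calculation — the identities $(\mu-\Amocn)\Llambda{\mu}=0$ and $\ddnGao\Llambda{\mu}=-\Nlambda{\mu}$, plus the domain verification via \autoref{cor:L_0 regularity} — which the paper leaves implicit.
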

\begin{proof}
	The similarity transform is given by
	\begin{equation*}
		\sS := 
		\begin{pmatrix}
			\Id & 0 & 0 & 0 & 0 \\
			0 & \Id & -\Llambda{0} & 0 & 0 \\
			0 & 0 & \Id & 0 & 0 \\
			0 & 0 & 0 & \Id & 0 \\
			0 & 0 & 0 & 0 & \Id
		\end{pmatrix}, \quad \text{with inverse} \quad \sS^{-1} := 
		\begin{pmatrix}
			\Id & 0 & 0 & 0 & 0 \\
			0 & \Id & \Llambda{0} & 0 & 0 \\
			0 & 0 & \Id & 0 & 0 \\
			0 & 0 & 0 & \Id & 0 \\
			0 & 0 & 0 & 0 & \Id
		\end{pmatrix}.
	\end{equation*}
	
	From \autoref{prop:L_0 existence} we conclude that $\sS \in \sL(\sX_0)$, and that the inverse $\sS^{-1}$ is also bounded on $\sX_0$. 
	A direct calculation shows that $\tsA = \sS \sA \sS^{-1}$.
    Standard regularity theory implies that $\Tilde{v}_\ocn \in D(\Amocn)$ with homogeneous boundary conditions fulfills $\Tilde{v}_\ocn \in D(A_0^\ocn)$.
    Consequently, since $\Tilde{v}_\ocn \coloneqq \vocn - L_0 \vice$ for $v \in D(A) = \sX_1$ has these properties, we conclude $D(\tsA) = \sS D(\sA)$.
\end{proof}

For $v = (\vatm,\vocn,\vice,h,a) \in \sX_1$, we conclude from \autoref{lem:aet} and its proof that $\vocn = \Tilde{v}_\ocn + L_0 \Tilde{v}_\ice$, where $\Tilde{v}_\ocn \in D(A_0^\ocn)$ as well as $\Tilde{v}_\ice \in \rW_\per^{2,q}(G)^2$. 
Together with \autoref{cor:L_0 regularity}, this results in the relation $\vocn \in \rW_\per^{2,q}(\Omegaocn)^2 \cap \rLsigmabar(\Omegaocn) \eqqcolon D(A^\ocn)$.
Therefore, we have
\begin{equation}\label{eq:rel of sX_1}
    \begin{aligned}
        \sX_1 
        &= \{v \in D(A^\atm) \times D(A^\ocn) \times D(\AH) \times D(\DeltaH) \times D(\DeltaH) :\\
        &\qquad \trGab\vocn = 0, \text{ on } \Gab, \text{ and } \trGao\vocn = \vice, \text{ on } \Gao\}.
    \end{aligned}
\end{equation}

Note that the operator $\tsA$ has a diagonal domain and therefore, it is possible to work component-wise. 
To establish $\Hinfty$-calculus, we decompose the operator matrix $\tsA$ into smaller submatrices. 
More precisely, we use the following representations
\begin{equation}\label{eq:decomposition tsA}
    \tsA = \begin{pmatrix}
		\Aatm & 0 \\
		0 & \J
	\end{pmatrix}, \quad D(\tsA) = D(\Aatm) \times D(\J),
\end{equation}
where the operator matrix $\J$ is split into
\begin{equation}\label{eq:decomposition J}
    \J = \begin{pmatrix}
			\Je & \B' \\
			0 & \Jz
		\end{pmatrix}, \quad D(\J) = D(\Je) \times D(\Jz), \text{ with } \B' = \begin{pmatrix}
			0 & 0 \\
			\Bh & \Ba
		\end{pmatrix} \andtext \Jz = \diag{\Deltah, \Deltaa}.
\end{equation}

Moreover, $\Je$ is given by
\begin{equation*}
    \Je = \begin{pmatrix}
		\An + \Llambda{0} \Coi  \ddnGao & - \Llambda{0}(\AH - \Coi  \Nlambda{0}) \\
		-\Coi  \ddnGao & \AH - \Coi  \Nlambda{0}
	\end{pmatrix}, \quad D(\Je) = D(\An) \times D(\AH).
\end{equation*}

The crucial step is to show that $\Je$ admits a bounded $\Hinfty$-calculus, which requires some preparation.

For convenience of the reader, we recall the notion of diagonal dominance of a block operator matrix from \cite[Assumption~3.1]{AH:23}.
Let $X$, $Y$ be Banach spaces and consider linear operators
\begin{equation*}
    \A \colon D(\A) \subset X \to X, \quad \D \colon D(\D) \subset Y \to Y, \quad \B \colon D(\B) \subset Y \to X \qandq \mathrm{C} \colon D(\mathrm{C}) \subset X \to Y,
\end{equation*}
where $\A$ and $\D$ are assumed to be densely defined and closed.
For $Z = X \times Y$, the block operator matrix
\begin{equation}\label{eq:block op matrix J}
    \mathrm{K} \colon D(\mathrm{K}) = D(\A) \times D(\D) \subset Z \to Z \quad \text{with} \quad \mathrm{K} \binom{x}{y} = \begin{pmatrix}
        \A & \B\\
        \mathrm{C} & \D
    \end{pmatrix} \binom{x}{y} \quad \text{for all} \quad \binom{x}{y} \in D(\mathrm{K})
\end{equation}
is then said to be {\em diagonally dominant} if $D(\D) \subset D(\B)$, $D(\A) \subset D(\mathrm{C})$, and there exist $c_{\A}$, $c_{\D}$, $L \ge 0$ with
\begin{equation*}
    \begin{aligned}
        \| \mathrm{C} x \|_Y &\le c_{\A} \| \A x \|_X + L \| x \|_X \quad \text{for all} \quad x \in D(\A), \quad \text{and}\\
        \| \B y \|_X &\le c_{\D} \| \D y \|_Y + L \| y \|_Y \quad \text{for all} \quad y \in D(\D).
    \end{aligned}
\end{equation*}
We remark that the relative boundedness of $\mathrm{C}$ is especially implied if there exists $\gamma \in (0,1)$ such that $\mathrm{C} \in \sL(D(\A^\gamma),Y)$, see e.\ g.\ \cite[Corollary~5.7]{AH:23}.
For the following result, we also refer to \cite[Corollary~5.7]{AH:23}.

\begin{lem}\label{lem:aux result Hinfty-calculus}
Let $\mathrm{K}$ be a diagonally dominant block operator matrix as in \eqref{eq:block op matrix J}, suppose in addition that $A$ and $D$ are sectorial operators and assume that there exists $\delta \in (0,1)$ such that for some constant $c>0$
\begin{equation}\label{eq:cond aux result}
    \begin{aligned}
        \mathrm{C}(D(\A^{1+\delta})) &\subset D(\D^\delta) \qandq \| \D^\delta \mathrm{C} x \|_Y \le c \| \A^{1+\delta} x \|_X \quad \text{for all} \quad x \in D(\A^{1+\delta}), \quad \text{and}\\
        \B(D(\D^{1+\delta})) &\subset D(\A^\delta) \qandq \| \A^\delta \B y \|_X \le c \| \D^{1+\delta} y \|_Y \quad \text{for all} \quad y \in D(\D^{1+\delta}).
    \end{aligned}
\end{equation}
Then if $\A$ and $\D$ admit a bounded $\Hinfty$-calculus on $X$ and $Y$, respectively, it follows that there is $\omega_0 \in \R$ such that for all $\omega > \omega_0$, the block operator matrix $\mathrm{K} + \omega$ admits a bounded $\Hinfty$-calculus on $Z$.
\end{lem}

We are now in the position to state and prove the $\Hinfty$-calculus of $\J_1$.

\begin{lem}\label{lem:J_1 Hinfty calculus}
	Let $p,q \in (1,\infty)$ such that $\nicefrac{1}{p}+\nicefrac{1}{q} < \frac{1}{2}$ and $(v_{\atm,0},v_{\ocn,0},v_0) \in V$. 
    Then there exists a constant $\omega_0 \in \R$ such that for all $\omega > \omega_0$, $-\Je + \omega$ admits bounded $\Hinfty$-calculus on $\rLqocn \times \rLq(G)^2$.
\end{lem}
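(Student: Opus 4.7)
The strategy is to exploit the decoupling approach for one-sided coupled operator matrices from \cite{Eng:03, CENN:03, EF:05, BE:19}, together with the stationary hydrostatic Stokes theory developed in \autoref{sec:stationary problem}, in order to transform $\Je$ into a block-triangular form to which a perturbation theorem for bounded $\Hinfty$-calculus applies. To this end, I would introduce the bounded and boundedly invertible similarity
\[
T := \begin{pmatrix} \Id & \Llambda{\mu} \\ 0 & \Id \end{pmatrix}, \quad T^{-1} = \begin{pmatrix} \Id & -\Llambda{\mu} \\ 0 & \Id \end{pmatrix}
\]
on $\rLqocn \times \rLq(\R^2)^2$, whose boundedness is supplied by \autoref{prop:L_0 existence}. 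A careful computation on $T D(\Je)$, exploiting the defining relations $(\mu - \Amocn)\Llambda{\mu} = 0$ and $\ddnGao \Llambda{\mu} = -\Nlambda{\mu}$, shows that
\[
T \Je T^{-1} = \begin{pmatrix} \Amocn - \mu & 0 \\ -\rocn \Cocn \Rocn \ddnGao & \AH + (\Id - \rocn \Cocn \Rocn)\Nlambda{\mu} \end{pmatrix},
\]
where the transformed domain $T D(\Je)$ consists of pairs $(\tilde v, u)$ satisfying the coupling conditions $\trGao \tilde v = u$ and $\trGab \tilde v = 0$ that were inherited from the original formulation of $\sA$. Since $T$ is a topological isomorphism, $-\Je + \omega$ and $-T \Je T^{-1} + \omega$ share the same $\Hinfty$-calculus properties.

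Next, I would verify bounded $\Hinfty$-calculus with angle strictly less than $\nicefrac{\pi}{2}$ for each of the two diagonal entries of $-T \Je T^{-1} + \omega$, equipped with the appropriate boundary conditions inherited from the coupled domain, for $\omega$ sufficiently large. The restriction of $\Amocn$ to functions with $\trGao = 0$ and $\trGab = 0$ coincides with the operator $\An$, which has the required bounded $\Hinfty$-calculus by \autoref{cor:Hinfty Stokes atm ocn}(b). For the lower-right block, $-\AH + \omega$ admits a bounded $\Hinfty$-calculus of angle strictly less than $\nicefrac{\pi}{2}$ on $\rLq(\R^2)^2$ by \autoref{cor:functional analytic properties AH}(a), while \autoref{cor:N AH bdd} guarantees that the hydrostatic Dirichlet-to-Neumann operator $\Nlambda{\mu}$ is relatively $(-\AH)^\delta$-bounded for some $\delta < \nicefrac{1}{2}$; hence the standard perturbation theorem for bounded $\Hinfty$-calculus (see e.g.\ \cite[Section~6]{KW:04}) yields the desired property for $-\AH - (\Id - \rocn \Cocn \Rocn)\Nlambda{\mu} + \omega$.

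Finally, the off-diagonal entry $-\rocn \Cocn \Rocn \ddnGao$ is relatively $(-\An)^{\delta'}$-bounded for some $\delta' < 1$ by \autoref{lem:ddn A0 bdd}, and hence acts as a lower-order perturbation relative to the top-left diagonal block. An application of a perturbation result for block lower-triangular operator matrices with bounded $\Hinfty$-calculus, as developed in \cite{CENN:03, KW:04, HRS:20}, then provides the claimed bounded $\Hinfty$-calculus of $-T\Je T^{-1} + \omega$, and consequently of $-\Je + \omega$, on $\rLqocn \times \rLq(\R^2)^2$. The principal technical difficulty I expect lies in the careful bookkeeping of the domain $T D(\Je)$ after the similarity transformation, since $\Llambda{\mu}$ fails to map $D(\AH)$ into $D(\An)$ and so the top-left entry must be interpreted through the maximal realization $\Amocn$ together with the coupling condition encoded in $T D(\Je)$; the rest of the argument then reduces to routine combinations of the perturbation and decoupling results already available.
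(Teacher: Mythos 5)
Your algebra is correct: conjugating with $T=\begin{pmatrix}\Id & \Llambda{\mu}\\ 0 & \Id\end{pmatrix}$ does turn $\Je$ into $\begin{pmatrix}\Amocn-\mu & 0\\ -\rocn \Cocn \Rocn \ddnGao & \AH+(\Id-\rocn\Cocn\Rocn)\Nlambda{\mu}\end{pmatrix}$ on the coupled domain $\{\trGao v=u,\ \trGab v=0\}$. But this $T$ is exactly $\sS^{-1}$ restricted to the relevant components, so your transformation is the \emph{inverse} of the decoupling already performed in \autoref{lem:aet}: $\Je$ was manufactured from the coupled block of $\sA$ precisely so as to have the \emph{diagonal} domain $D(\An)\times D(\AH)$, and you conjugate back to an operator matrix whose domain is not a product. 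At that point the tools you invoke no longer apply. Perturbation theorems for block lower-triangular operator matrices, and the relative-boundedness perturbation results in \cite{KW:04}, presuppose a diagonal domain so that the diagonal entries are closed operators on the individual factors; here the top-left entry $\Amocn-\mu$ subject to $\trGao v=u$ is not a standalone operator on $\rLqocn$ (its domain depends on the second component), and identifying it with $\An$ silently replaces the coupling condition $\trGao v=u$ by the homogeneous condition $\trGao v=0$, which is a different operator. Neither \cite{CENN:03} (semigroup generation for one-sidedly coupled matrices) nor \cite{HRS:20} contains an $\Hinfty$-calculus theorem for matrices with non-diagonal domain, so the sentence ``an application of a perturbation result for block lower-triangular operator matrices \dots then provides the claimed bounded $\Hinfty$-calculus'' is precisely the missing content, not a routine conclusion.

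The paper's proof goes in the opposite direction: it stays with the diagonal-domain operator and isolates the principal part $\tilde{\J}_1=\begin{pmatrix}\An-\mu & -\Llambda{\mu}\AH\\ -\rocn\Cocn\Rocn\ddnGao & \AH\end{pmatrix}$, which is a genuinely \emph{full} (non-triangular) matrix with unbounded off-diagonal entries --- the $(1,2)$ entry has the same order as $\AH$ and is not a lower-order perturbation. The bounded $\Hinfty$-calculus for $\tilde{\J}_1-\omega$ is obtained from the Agresti--Hussein theorem \cite[Corollary~5.7]{AH:22} for diagonally dominant block operator matrices, whose hypotheses are checked using \autoref{lem:ddn A0 bdd} and \autoref{cor:L_0 regularity}; only afterwards are the remaining terms $\Llambda{\mu}\rocn\Cocn\Rocn\ddnGao$, $\Llambda{\mu}\Nlambda{\mu}$ and $\Nlambda{\mu}$ absorbed as relatively $(-\Aocn)^{\gamma}$- and $(-\AH)^{\theta}$-bounded perturbations with exponents strictly below $1$ via \cite{KW:04}. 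Your proposal supplies no substitute for this key two-sided block-matrix step, so as written it has a genuine gap.
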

\begin{proof}
	We set $\A := \An$, $\B := - \Llambda{0}\AH$, $\mathrm{C} := -\Coi  \ddnGao$ and $\D := \AH$ to mimic the notation from above.
	First, we consider the operator matrix $\tilde{\Je}$ given by
		\begin{equation}
		\tilde{\J}_1
		:=
		\begin{pmatrix}
			\An & - \Llambda{0}\AH \\
			-\Coi  \ddnGao & \AH
		\end{pmatrix}
		= 
		\begin{pmatrix}
			\A & \B \\
			\mathrm{C} & \D 
		\end{pmatrix} 
		\label{eq:tildeJ}
	\end{equation}
	with diagonal domain.
	We now verify the assumptions of \autoref{lem:aux result Hinfty-calculus} for $\tilde{\J}_1$.
    The diagonal dominance of $\Tilde{\J}_1$ follows from \autoref{lem:ddn A0 bdd}, as the latter result also implies that $\mathrm{C} \in \sL((D(-\A)^{\gamma}),\rL^q(G)^2)$ for all $\gamma \in (\nicefrac{1}{2}(1+\nicefrac{1}{q}),1]$.
	It remains to verify \eqref{eq:cond aux result}.
	Let $\delta \in (0,\nicefrac{1}{2q})$. 
    As the bounded $\Hinfty$-calculus of $-\AH + \omega$ and $-\An$ from \autoref{prop:MR and Hinfty AH} and \autoref{cor:Hinfty Stokes atm ocn} especially yields the boundedness of the imaginary powers of these operators, see also the relation \eqref{eq:rel Hinfty BIP RS} below, it follows that
	\begin{equation*}
		\begin{aligned}
			D(\A^{1+\delta}) &\hra \rHsqper{2+2\delta}(\Omegaocn)^2 \cap \rLqocn,
			&&D(\A^{\delta}) = \rHsqper{2\delta}(\Omegaocn)^2 \cap \rLqocn, \\
			D(\D^{1+\delta}) &= \rHsqper{2+2\delta}(G)^2, 
			&&D(\D^{\delta}) = \rHsqper{2\delta}(G)^2 .
		\end{aligned}
	\end{equation*}
	
	As in the proof of \autoref{lem:ddn A0 bdd}, it follows that $\mathrm{C} D(\A^{1+\delta}) \subset \rHsqper{2\delta}(G)^2 = D(\D^{\delta})$. 
	From \autoref{cor:L_0 regularity} and $\B = -\Llambda{0}\D$ it follows that
	$\B D(\D^{1+\delta}) = -\Llambda{0} D(\D^{\delta}) = \Llambda{0} \rHsqper{2\delta}(G)^2 \subset \rHsqper{2\delta}(\Omega)^2 \cap \rLqocn = D(\A^{\delta})$. 
	Since the operators $\B$ and $\mathrm{C}$ are closed, the estimates in \eqref{eq:cond aux result} follow by the closed graph theorem.
	
	Now, \autoref{lem:aux result Hinfty-calculus} implies that $-\tilde{\J}_1+\omega$, given by \eqref{eq:tildeJ}, admits a bounded $\Hinfty$-calculus on $\rLqocn \times \rLq(G)^2$ for some constant $\omega \in \R$. 
	By \autoref{lem:ddn A0 bdd} and \autoref{prop:L_0 existence}, the operator $\Llambda{0} \Coi  \ddnGao$ is relatively $(-\Aocn)^{\gamma}$-bounded for all $\gamma \in (\nicefrac{1}{2}(1+\nicefrac{1}{q}),1]$. 
	In addition, by \autoref{prop:L_0 existence} and \autoref{cor:N AH bdd}, the operators $\Llambda{0} \Coi  \Nlambda{0}$ and $\Coi  \Nlambda{0}$ are relatively
        $(-\AH)^{\theta}$-bounded for all $\theta \in (\nicefrac{1}{2},1)$. 
	Therefore, the claim follows from perturbation theory for the $\Hinfty$-calculus, see for instance \cite[Proposition~13.1]{KW:04}.
\end{proof}

We conclude with the main result  of this section. 

\begin{prop}\label{thm:Hinfty operatormatrix}
	Let $p,q \in (1,\infty)$ such that $\nicefrac{1}{p} + \nicefrac{1}{q} < \nicefrac{1}{2}$ and $(v_{\atm,0},v_{\ocn,0},v_0) \in V$.
	Then there exists a constant $\omega_0 \in \R$ such that for all $\omega > \omega_0$, the operator matrix $-\sA + \omega$, with $\sA$ given in \eqref{eq:operator matrix}, admits a bounded $\Hinfty$-calculus on $\sX_0$.
\end{prop}
\begin{proof}
    In the sequel, the constants $\omega \in \R$ may change from line to line.
	It follows from \cite[Theorem~8.22]{Nau:13} that $- \Jz + \omega$ admits a bounded $\Hinfty$-calculus on $\rLq(G) \times \rLq(G)$ for some $\omega \in \R$. 
	Combining this with \autoref{lem:J_1 Hinfty calculus}, we deduce that $- \diag{\Je,\Jz} + \omega$ with diagonal domain admits a bounded $\Hinfty$-calculus on $\rLqocn \times \rLq(G)^2 \times \rLq(G) \times \rLq(G)$ for some $\omega \in \R$.
	Further, we know from \autoref{cor:Bh Ba relatively AH-bd} that $\B'$ is relatively $(-\AH+\tilde{\omega})^{\frac{1}{2}}$-bounded, and it follows from \cite[Proposition 13.1]{KW:04} and \eqref{eq:decomposition J} that $- \J+\omega$ admits a bounded $\Hinfty$-calculus on $\rLqocn \times \rLq(G)^2 \times \rLq(G) \times \rLq(G)$ for some $\omega \in \R$.
	
	Invoking \autoref{cor:Hinfty Stokes atm ocn}(a), we conclude from \eqref{eq:decomposition tsA} that $-\tsA+\omega$ admits a bounded $\Hinfty$-calculus on $\sX_0$.
 	The claim finally follows from \autoref{lem:aet} together with the fact that the bounded $\Hinfty$-calculus is preserved under similarity transforms, see for instance \cite[Proposition~2.11]{DHP:03}.
\end{proof}

We collect some interesting functional analytic properties.
For the spaces $\sYbeta$ and $\sYtheta$, see \eqref{eq:Y spaces}. 

\begin{cor}\label{cor:functionalanalytic properties opmat}
	Let $p,q \in (1,\infty)$ such that $\nicefrac{1}{p}+\nicefrac{1}{q} < \nicefrac{1}{2}$ and $(v_{\atm,0},v_{\ocn,0},v_0) \in V$.
	Then there exists $\omega_0 \in \R$ such that for $\omega > \omega_0$, the following properties hold true.
	
	\begin{enumerate}[(a)]
		\item The operator $-\sA + \omega$ has an $\sR$-bounded $\Hinfty$-calculus on $\sX_0$ with angle $\Phi_{-\sA + \omega} < \nicefrac{\pi}{2}$.
		
		\item It holds that $-\sA + \omega \in \BIP(\sX_0)$, i.\ e., the operator $-\sA + \omega$ has bounded imaginary powers.
		
		\item The operator $-\sA + \omega$ has the property of maximal $\rLp[0,\infty)$-regularity in $\sX_0$.
	
		\item For $\beta \in (0,1)$ with $\beta \notin \{\nicefrac{1}{2q},\nicefrac{1}{2}+\nicefrac{1}{2q}\}$, the fractional power domains coincide with the complex interpolation spaces, i.\ e., $D((-\sA + \omega)^\beta) \simeq [\sX_0,D(-\sA)]_{\beta} \hra \sYbeta$.		
		
		\item For $\theta \in (0,1)$ with $\theta \notin \{\nicefrac{1}{2q},\nicefrac{1}{2}+\nicefrac{1}{2q}\}$, the real interpolation spaces satisfy $(\sX_0,D(-\sA))_{\theta,p} \hra \sYtheta$.
		
		\item The Riesz transform $\nabla (-\sA + \omega)^{\nicefrac{1}{2}}$ is bounded on $\sX_0$.

        \item The operator matrix $\sA$ has a compact resolvent, and the spectrum $\sigma(\sA)$ of $\sA$ on $\sX_0$ is $q$-independent.
	\end{enumerate} 
\end{cor}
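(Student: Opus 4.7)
The plan is to derive each assertion from \autoref{thm:Hinfty operatormatrix} by invoking standard consequences of a bounded $\Hinfty$-calculus, and to handle the non-diagonal domain of $\sA$ via the similarity transformation $\tsA = \sS \sA \sS^{-1}$ from \autoref{lem:aet}, which reduces every interpolation question about $D(\sA)$ to a componentwise question about $D(\tsA)$. For (a)--(d), I would observe that $\sX_0$ is a finite product of $\rLq$-spaces over $\sigma$-finite measure spaces, hence enjoys property~$(\alpha)$ and is a UMD-space. Following \cite[Section~4.5]{PS:16}, bounded $\Hinfty$-calculus together with property~$(\alpha)$ automatically upgrades to $\sR$-bounded $\Hinfty$-calculus with the same angle and yields BIP; since the angle is strictly less than $\nicefrac{\pi}{2}$, \cite[Theorem~4.4]{DHP:03} then delivers maximal $\rLp[0,\infty)$-regularity, and \cite[Theorem~3.3.2]{PS:16} the analytic semigroup of angle $\nicefrac{\pi}{2}$. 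This mirrors the argument already used in \autoref{cor:functional analytic properties AH} and \autoref{cor:functional analytic properties hydrostatic Stokes} for the individual building blocks.

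The core content lies in (e) and (f). BIP together with \cite[Theorem~2.5]{DHP:03} identifies $D((-\sA+\omega)^\theta)$ with the complex interpolation space $[\sX_0,D(\sA)]_\theta$. To locate this space inside $\sYbeta$, I would exploit that $\sS \in \sL(\sX_0)$ is an isomorphism by \autoref{lem:aet} and that complex interpolation is compatible with Banach space isomorphisms, as verified in \autoref{sec:appendix}. Hence
\begin{equation*}
    [\sX_0, D(\sA)]_\theta \;=\; \sS^{-1}\,[\sX_0, D(\tsA)]_\theta,
\end{equation*}
and since $\tsA$ has diagonal domain the right-hand side factors into a product. Each factor is then handled by \autoref{cor:functional analytic properties AH}(d), \autoref{cor:functional analytic properties hydrostatic Stokes}(e), and the classical analogue for $\DeltaH$ on $\R^2$, producing a componentwise embedding into $\sYbeta$. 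Applying $\sS^{-1}$ preserves the embedding because its only non-diagonal entry is $\Llambda{\mu}$, which maps the Bessel-potential scale on $\R^2$ continuously into the corresponding scale on $\Omegaocn$ by \autoref{cor:L_0 regularity}.

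Part (f) follows by the same argument applied to the real interpolation functor, using the Besov-scale boundedness in \autoref{cor:L_0 regularity} together with \autoref{cor:functional analytic properties AH}(e) and \autoref{cor:functional analytic properties hydrostatic Stokes}(f). Finally, (g) is a direct corollary of (e) with $\theta = \nicefrac{1}{2}$, since $\sYbeta$ with $\beta = \nicefrac{1}{2}$ controls one full horizontal derivative. The main obstacle I anticipate is the transfer of the interpolation identity through the off-diagonal term $\Llambda{\mu}$ in $\sS^{-1}$: this is precisely where the mapping properties of the hydrostatic Dirichlet operator established in \autoref{sec:stationary problem} are indispensable, and where the compatibility lemma of \autoref{sec:appendix} must be applied carefully on the relevant anisotropic function spaces.
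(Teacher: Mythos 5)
Your proposal is correct and follows essentially the same route as the paper: (a)--(d) via property~$(\alpha)$, UMD and the standard consequences of the bounded $\Hinfty$-calculus from \autoref{thm:Hinfty operatormatrix}, and (e)--(g) by transporting the componentwise interpolation spaces of the diagonally-domained $\tsA$ through the similarity $\sS$ using \autoref{lem:interpolation spaces AET}, with \autoref{cor:L_0 regularity} absorbing the off-diagonal $\Llambda{\mu}$-contribution. The paper writes the transported space explicitly as $\{(\vatm,\vocn+\Llambda{\mu}\uice,\uice,h,a)\}$ rather than as $\sS^{-1}[\sX_0,D(\tsA)]_\theta$, but this is the same argument.
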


\begin{proof}
  For (a), we observe that the space $\sX_0$ has the property $(\alpha)$. Thus $\sR$-boundedness of the $\Hinfty$-calculus is then equivalent to the $\Hinfty$-calculus and the angles
  coincide, showing (a).

    The assertions~(b) and (c) follow from  \autoref{thm:Hinfty operatormatrix} via the relations
    \begin{equation}\label{eq:rel Hinfty BIP RS}
        \Hinfty(\sX_0) \subset \BIP(\sX_0) \subset \sR \sS(\sX_0) \quad \text{with} \quad \phi_{\sA}^\infty \ge \theta_{\sA} \ge \phi_{\sA}^\sR,
    \end{equation}
    see for example \cite[Section~4.4]{DHP:03}, and the observation that $\sX_0$ is in particular a UMD-space.

	The first part of property~(d) follows from \cite[Theorem 2.5]{DHP:03}.
	Note that for $\tsA$, we obtain from the bounded imaginary powers of Hibler's operator and the hydrostatic Stokes operators in view of \autoref{prop:MR and Hinfty AH} and \autoref{cor:Hinfty Stokes atm ocn} as well as \eqref{eq:rel Hinfty BIP RS} that the complex interpolation spaces are given by
	\begin{equation*}
	    \begin{aligned}
		    \tilde{\sX_\beta} &:= 
		    [\sX_0,D(-\tsA)]_{\beta}\\
		    &= (\rHsqbcper{2\beta}(\Omegaatm) \cap \rLqatm) \times (\rHsqbcper{2\beta}(\Omegaocn) \cap \rLqocn) \times \rHsqper{2\beta}(G)^2 \times \rHsqper{2\beta}(G)\times \rHsqper{2\beta}(G).	    
	    \end{aligned}
	\end{equation*}
    Now, the isomorphism $\sS$ from \autoref{lem:aet} is also an isomorphism in the category of Banach couples, see \cite[Section~I.2.1]{Ama:93}.
    Thanks to the functoriality of the interpolation, the complex interpolation spaces for $\sA$ are given by
		\begin{equation*}
			\begin{aligned} 
			&[\sX_0,D(-\sA)]_{\beta} 
			= \{ (\vatm, \vocn + \Llambda{0}\vice, \vice, h, a) : (\vatm, \vocn, \vice, h, a)  \in \tilde{\sX}_{\beta} \} \\
			&\hra
			 (\rHsqbcper{2\beta}(\Omegaatm) \cap \rLqatm) \times ((\rHsqbcper{2\beta}(\Omegaocn) \cap \rLqocn) + \Llambda{0} (\rHsqper{2\beta}(G)^2)) \times \rHsqper{2\beta}(G)^4.
			\end{aligned}
	\end{equation*}	
	Finally, \autoref{cor:L_0 regularity} yields $(\rHsqbcper{2\beta}(\Omegaocn) \cap \rLqocn) + \Llambda{0} (\rHsqper{2\beta}(G)^2) \hra \rHsqper{2\beta}(\Omegaocn)^2 \cap \rLqocn$.
	
	Property~(e) follows from analogous arguments, replacing complex by real interpolation and Bessel potential by Besov spaces. 
	
	Assertion~(f) is a consequence of the shape of the corresponding fractional power domain obtained in~(e).
    The last assertion follows from the compactness of the resolvent of $\tsA$ by the Rellich-Kondrachov theorem as well as the observation that the resolvent set is preserved under the similarity transform and the representation of the resolvent of $\sA$ in terms of $\tsA$.
\end{proof}

\subsection{Lipschitz estimates}
\label{ssec:nonlinearity system}

\

We establish Lipschitz estimates of the operator matrix $\sA$ given by \eqref{eq:operator matrix} and the nonlinearity $\sF$ given by \eqref{eq:nonlinearity}.
To account for the fact that we only have maximal regularity up to translation, for $\omega \in \R$ as in \autoref{thm:Hinfty operatormatrix} or \autoref{cor:functionalanalytic properties opmat}, we set $\sA_\omega := \sA - \omega$ as well as $\sF_\omega := \sF - \omega$.
Thanks to \autoref{cor:functionalanalytic properties opmat}(e), we may prove the Lipschitz estimates component-wise.

\begin{prop}\label{prop:nonlinearity system}
	Let $p,q \in (1,\infty)$ such that $\nicefrac{1}{p}+\nicefrac{1}{q} < \nicefrac{1}{2}$ and $u_0 = (v_{\atm,0},v_{\ocn,0},v_0) \in V$.
	Then there exists $r_0 > 0$ and a constant $L > 0$ such that $\oB_{\sX_\gamma}(u_0,r_0) \subset V$ and
	\begin{equation*}
	    \begin{aligned}
	        \| (\sA_\omega(u_1) - \sA_\omega(u_2))u \|_{\sX_0} &\le L \| u_1 - u_2 \|_{\sX_\gamma} \| u \|_{\sX_1},\\
	        \| \sF_\omega(u_1) - \sF_\omega(u_2) \|_{\sX_0} &\le L \| u_1 - u_2 \|_{\sX_\gamma}
	    \end{aligned}
	\end{equation*}
	for all $u_1, u_2 \in \oB_{\sX_\gamma}(u_0,r_0)$ and all $u \in \sX_1$.
\end{prop}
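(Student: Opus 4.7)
The plan is to reduce both inequalities to the componentwise Lipschitz estimates already established: \autoref{cor:estimates bilinearity PE atm ocn} for the atmospheric and oceanic bilinearities, and \autoref{lem:estimates rhs sea ice} for the sea ice block. These estimates live in the unconstrained product spaces $\sYgamma$, $\sYzero$, $\sYone$ from \eqref{eq:Y spaces}, so I would transport them to $\sX_\gamma$, $\sX_0$, $\sX_1$ via the embedding $\sX_\gamma \hookrightarrow \sYgamma$ provided by \autoref{cor:functionalanalytic properties opmat}(f), together with the trivial inclusions $\sX_0 \subset \sYzero$ and $\sX_1 \subset \sYone$ coming from \eqref{eq:shape X0}, \eqref{eq:shape X1} and \eqref{eq:Y spaces}. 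Before anything else I would note that $\omega$ enters only as a scalar shift, so $\sA_\omega(z_1) - \sA_\omega(z_2) = \sA(z_1) - \sA(z_2)$ and $\sF_\omega(z_1) - \sF_\omega(z_2) = \sF(z_1) - \sF(z_2)$, and then choose $r_0 > 0$ such that $\oB_{\sX_\gamma}(z_0,r_0) \subset V$, which is possible since $V$ is open.

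For the first bound, the key structural observation is that $\sA$ in \eqref{eq:operator matrix} depends on $z$ only through the ice triple $(\uice, h, a)$ and only via the third row. Writing $z_i = (v_{\atm,i}, v_{\ocn,i}, v_i)$ with $v_i = (u_{\ice,i}, h_i, a_i)$ and testing against $w = (w_\atm, w_\ocn, u, h, a) \in \sX_1$, every entry of $(\sA(z_1) - \sA(z_2))w$ vanishes except
\begin{equation*}
(\AH(v_1) - \AH(v_2))u + (\Bh(h_1,a_1) - \Bh(h_2,a_2))h + (\Ba(h_1,a_1) - \Ba(h_2,a_2))a,
\end{equation*}
which is exactly $(\Aiceom(v_1) - \Aiceom(v_2))(u,h,a)$ in the notation of \eqref{eq:Aiceom}. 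The first estimate of \autoref{lem:estimates rhs sea ice} then controls this by $L\|v_1 - v_2\|_{\sYgammai}\|(u,h,a)\|_{\sYonei}$, and the two embeddings above absorb these factors into $\|z_1 - z_2\|_{\sX_\gamma}$ and $\|w\|_{\sX_1}$ respectively.

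For the second bound I would split $\sF$ into its rows as in \eqref{eq:nonlinearity}. The first two rows are the bilinearities $\Fatm(\vatm)$ and $\Focn(\vocn)$, to which \autoref{cor:estimates bilinearity PE atm ocn}(a), (b) apply; the constraint $z_i \in \oB_{\sX_\gamma}(z_0,r_0)$ bounds the quadratic factor by $2(r_0 + \|z_0\|_{\sX_\gamma})$, yielding a Lipschitz constant depending on $z_0$ and $r_0$. The remaining three rows form $\Fice(z)$, handled by the second estimate of \autoref{lem:estimates rhs sea ice}. Transporting each bound back to $\sX_\gamma$ and $\sX_0$ via \autoref{cor:functionalanalytic properties opmat}(f) and summing over components yields the claim. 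I do not anticipate substantial difficulty: the analytic content sits in \autoref{lem:estimates rhs sea ice}, \autoref{cor:estimates bilinearity PE atm ocn} and \autoref{cor:functionalanalytic properties opmat}(f). The only subtlety is that $\sX_\gamma$ enforces the coupling $\trGao \vocn = \uice$, so the inclusion $\sX_\gamma \hookrightarrow \sYgamma$ is strict; this is harmless because only an upper bound on the $\sYgamma$-norm by the $\sX_\gamma$-norm is needed. The requirement $h \ge \kappa_1 > 0$ demanded by \autoref{lem:estimates rhs sea ice} (to control $1/h$) is exactly what $z_i \in V$ provides.
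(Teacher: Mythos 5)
Your proposal is correct and follows essentially the same route as the paper: the paper's own proof is precisely the concatenation of \autoref{lem:estimates rhs sea ice} and \autoref{cor:estimates bilinearity PE atm ocn}, transported via \autoref{cor:functionalanalytic properties opmat}(f) with $\theta = 1-\nicefrac{1}{p}$ (noting $\sX_0 = \sY_0$), which is exactly your reduction. Your additional observations --- that the $\omega$-shifts cancel in differences, that the $z$-dependence of $\sA$ sits only in the ice row so the constant off-diagonal entry $-\rocn\Cocn\Rocn\ddnGao$ drops out, and that the strict inclusion $\sX_\gamma \hookrightarrow \sYgamma$ is harmless --- are all accurate and merely make explicit what the paper leaves implicit.
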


\begin{proof}
    We observe that $u_0 \in V$ especially satisfies that $u_0 \in W$, and that $\sX_0$ and $\sY_0$ coincide.
    The assertion of this proposition then follows by concatenating \autoref{lem:estimates rhs sea ice} and \autoref{cor:estimates bilinearity PE atm ocn}, and by exploiting \autoref{cor:functionalanalytic properties opmat}(e) with the concrete choice $\theta = 1 - \nicefrac{1}{p}$.
    	We remark that the assumptions on $p$ and $q$ imply in particular that $\theta > \nicefrac{1}{2} + \nicefrac{1}{2q}$.
\end{proof}

\section{Local Well-Posedness}
\label{sec:proofs local}

This section is dedicated to showing local strong well-posedness of the coupled system.

\begin{proof}[Proof of \autoref{thm:main thm}]
    It follows from \autoref{cor:functionalanalytic properties opmat}(c) that $-\sA_\omega$ has the property of maximal $\rLp$-regularity on $\sX_0$, and \autoref{prop:nonlinearity system} yields that suitable Lipschitz estimates are valid for $\sA_\omega$ and $\sF_\omega$.
    The assertion then follows from the local existence theorem for quasilinear evolution equations, see \cite[Theorem~5.1.1]{PS:16}, and the fact that finding a solution to the quasilinear abstract Cauchy problem \eqref{eq:quasilinear Cauchy problem} is equivalent to solving the coupled system of PDEs \eqref{eq:coupled system} by the reformulation in \autoref{sec:coupled system as qee}.
\end{proof}

\section{Global Well-Posedness close to Constant Equilibria}
\label{sec:proofs global}

In this section, we verify that a slightly simplified version of the coupled system \eqref{eq:coupled system}, or, equivalently,~\eqref{eq:quasilinear Cauchy problem}, is globally well-posed for
initial data close to constant equilibria.
To this end, we employ the so-called generalized principle of linearized stability, see \cite[Section~5.3]{PS:16}.

Let $p,q \in (1,\infty)$ such that $\nicefrac{1}{p} + \nicefrac{1}{q} < \nicefrac{1}{2}$. We investigate equilibria in the situation that the external forces $\fatm$, $\focn$ and $g \nablaH H$ vanish, the melting and freezing effects $\Sh$ and $\Sa$ are neglected, and the effect of the atmospheric wind on the sea ice in the form of $\tatm$ is not present, i.\ e., $ \fatm = \focn = g \nablaH H = \tatm = 0$ and $\Sh = \Sa = 0$.

Similarly as in \autoref{ssec:nonlinearity system}, see also \cite[Sections~6 and 7]{BDHH:22}, we verify that
\begin{equation*}
    (\sA,\sF_s) \in \rC^1(V,\sL(\sX_1,\sX_0) \times \sX_0)
\end{equation*}
for an open subset $V \subset \sX_\gamma$, where $\sF_s$ is given by
\begin{equation*}
    \sF_s(\vatm,\vocn,\vice,h,a) = \begin{pmatrix}
			\bilinatm  \\
			\bilinocn  \\
			\bilinHice\\
			\divH(\vice h)\\
			\divH(\vice a)
		\end{pmatrix}.
\end{equation*}
By $\sE \subset V \cap \sX_1$, we denote the set of equilibrium solutions to
\begin{equation}\label{eq:syst for equilibrium}
    u' - \sA(u)u + \sF_s(u) = 0, \quad t > 0, \quad u(0) = u_0,
\end{equation}
where $u = (\vatm,\vocn,\vice,h,a)$ again represents the principal variable of the system.
The property of being an equilibrium solution $u \in \sE$ is equivalent to $u \in V \cap \sX_1$ as well as $\sA(u)u = \sF_s(u)$.

We find that $u_* = (0,0,0,h_*,a_*)$, with $h_* \in (\kappa_1,\kappa_2)$ and $a_* \in (0,1)$ constant in space and time, is an equilibrium solution of \eqref{eq:syst for equilibrium}, because it satisfies the coupling condition $\trGao \vocn = \vice$ on $\Gao$, yielding that $u_* \in V \cap \sX_1$, and it holds that $\sA(u_*)u_* = 0 = \sF_s(u_*)$.

Next, we compute the total linearization of \eqref{eq:syst for equilibrium} at $u_*$ taking the shape
\begin{equation*}
    \sA_0 u = \sA(u_*)u + (\sA'(u_*)u)u_* - \sF_s'(u_*)u, \quad \for u \in \sX_1.
\end{equation*}
Abbreviating $P(h_*,a_*)$ by $P_*$, i.\ e., $P_* = p^* h_* \exp(-c(1-a_*))$, we find that $\sA(u_*)u$ is of the form
\begin{equation}\label{eq:sA(u_*)}
    \sA(u_*) u
    = \begin{pmatrix}
        \Aatm \vatm\\
        \Amocn \vocn\\
        -\Coi(h_*)  \ddnGao \vocn + \AH(u_*)\vice + \Bh(u_*) h + \Ba(u_*)a\\
        \Deltah h\\
        \Deltaa a
    \end{pmatrix},
\end{equation}
where
\begin{equation*}
    \begin{aligned}
        \Coi(h_*)  \ddnGao \vocn &= \frac{\muocn}{\rice h_*}  \ddnGao \vocn, \quad (\AH(u_*)\vice)_i = \frac{P_*}{2 \rice h_* \delta^{\nicefrac{1}{2}}} \sum_{j,k,l=1}^2 \S_{ij}^{kl} \partial_k \partial_l v_{\ice,j},\\
        \Bh(u_*)h &= -\frac{\partial_h P_*}{2 \rice h_*} \nablaH h, \qandq \Ba(u_*)a = -\frac{\partial_a P_*}{2 \rice h_*} \nablaH a.
    \end{aligned}
\end{equation*}
Moreover, the shape of $u_*$ yields that $(\sA'(u_*)u)u_* = 0$ for all $u \in \sX_1$, and we compute that $\sF_s'(u_*)u$ is given by $\sF_s'(u_*)u = (0,0,0,h_* \divH \vice,a_* \divH \vice)^\top$.
This results in the total linearization
\begin{equation}\label{eq:tot lin}
    \sA_0 u = \sA(u_*) u - \sF_s'(u_*)u = \begin{pmatrix}
        \Aatm \vatm\\
        \Amocn \vocn\\
        -\Coi(h_*)  \ddnGao \vocn + \AH(u_*)\vice + \Bh(u_*) h + \Ba(u_*)a\\
        \Deltah h - h_* \divH \vice\\
        \Deltaa a - a_* \divH \vice
    \end{pmatrix}
\end{equation}
with domain $D(\sA_0) = \sX_1$.
At this stage, we recall from \eqref{eq:rel of sX_1} that for $v \in \sX_1$, it especially follows that $\vocn \in \rW^{2,q}(\Omegaocn)^2$.
In the following, we make use of this improved regularity.

The next step is to verify that an equilibrium $u_*$ of the above shape is normally stable in the sense of \cite[Theorem~5.3.1]{PS:16}.
To this end, we first investigate the spectral properties of $\sA_0$.

\begin{lem}\label{lem:spectral props tot lin}
If $u_* = (0,0,0,h_*,a_*)$ with $h_* \in (\kappa_1,\kappa_2)$ and $a_* \in (0,1)$ constant in space and time, then the total linearization $\sA_0$ with domain $D(\sA_0) = \sX_1$ satisfies $\sigma(\sA_0) \setminus \{0\} \subset \C_-$.   
\end{lem}

\begin{proof}
As a preparation, we first calculate some integrals for $u = (\vatm,\vocn,\vice,h,a) \in \sX_1$.
Using an integration by parts, invoking the periodic boundary conditions on the lateral boundary, the homogeneous Neumann boundary conditions on the upper and lower boundary as well as the condition that the horizontal divergence of the vertically averaged $\vatm$ vanishes for $\vatm \in D(\Aatm)$, we obtain that
\begin{equation}\label{eq:int by parts Aatm}
\begin{aligned}
    -\scalarprodLtwoatm{\Aatm \vatm}{\vatm} &= \scalarprodLtwoatm{\Delta \vatm + \nablaH \pi}{\vatm}\\
    &= \intatm{|\nabla \vatm|^2} = \| \nabla \vatm \|_{\rLtwoOatm}^2.
\end{aligned}
\end{equation}

Similarly, using that $\trGao\vocn = \vice$ on $\Gao = G$ as well as $\trGab \vocn = 0$ on $\Gab$ and employing Poincar\'e's inequality, we infer that
\begin{equation}\label{eq:int by parts Amocn}
    \begin{aligned}
        -\scalarprodLtwoocn{\Amocn \vocn}{\vocn}
        &= \intocn{|\nabla \vocn |^2} - \int_{\Gao} \ddnGao \vocn \cdot \trGao\vocn \d \xH\\
        &\ge C_1 \| \vocn \|_{\rH^1(\Omegaocn)}^2 - \scalarprodLtwoG{\ddnGao \vocn}{\vice}.
    \end{aligned}
\end{equation}

Next, from \cite[(4.4) and (7.4)]{BDHH:22} we recall that
\begin{equation*}
    \sum_{i,j,k,l=1}^2 \mathbb{S}_{ij}^{kl} \dk{l} v_{\ice,j} \dk{k} v_{\ice,i} = \tri^2(\nablaH \vice) \ge \frac{2}{e^2} |\eps(\vice)|^2.
\end{equation*}
Together with an integration by parts, the periodic boundary conditions and Korn's inequality, the latter estimate yields that
\begin{equation}\label{eq:int by parts AH}
\begin{aligned}
    -\scalarprodLtwoG{\AH(u_*) \vice}{\vice}
    &= -\frac{P_*}{2 \rice h_* \delta^{\nicefrac{1}{2}}} \intG{\sum_{i,j,k,l=1}^2 \mathbb{S}_{ij}^{kl} \dk{k} \dk{l} v_{\ice,j} v_{\ice,i}}\\
    &\ge C_2 \frac{P_*}{2 \rice h_* \delta^{\nicefrac{1}{2}}} \| \nablaH \vice \|_{\rLtwoG}^2.
\end{aligned}
\end{equation}
Using the periodic boundary conditions in another integration by parts, we conclude that
\begin{equation}\label{eq:eq:int by parts hor laplacians}
    -\scalarprodLtwoG{\DeltaH h}{h}
    = \| \nablaH h \|_{\rLtwoG}^2 \quad \text{and} \quad 
    -\scalarprodLtwoG{\DeltaH a}{a}
    = \| \nablaH a \|_{\rLtwoG}^2.
\end{equation}

For $u = (\vatm,\vocn,\vice,h,a)$, to determine the spectrum of $\sA_0$, we test the equation $(\lambda - \sA_0)u = 0$ with
\begin{equation}\label{eq:choice test fct}
    (\vatm,\vocn,c_3 \vice, c_4 h, c_5 a), \quad \text{where} \quad c_3 = \frac{1}{\Coi(h_*)}, \enspace c_4 = c_3 \frac{\partial_h P_*}{2 \rice h_*^2} \text{ and } c_5 = c_3 \frac{\partial_a P_*}{2 \rice h_* a_*}.
\end{equation}
Taking into account that $h_* \in (\kappa_1,\kappa_2)$ and $a_* \in (0,1)$, we observe that $c_3, c_4, c_5 > 0$.
Integrating by the vector parts, exploiting the periodic boundary conditions on the lateral boundary and inserting \eqref{eq:int by parts Aatm}, \eqref{eq:int by parts Amocn}, \eqref{eq:int by parts AH} as well as \eqref{eq:eq:int by parts hor laplacians}, we then obtain by virtue of the choice of $c_3, c_4$ and $c_5$ in \eqref{eq:choice test fct}
\begin{equation*}
    \begin{aligned}
            0 &= \lambda \| \vatm \|_{\rLtwoatm}^2 + \lambda \| \vocn \|_{\rLtwoocn}^2 + \lambda c_3 \| \vice \|_{\rLtwoG}^2 + \lambda c_4 \| h \|_{\rLtwoG}^2 + \lambda c_5 \| a \|_{\rLtwoG}^2\\
    &\quad -\scalarprodLtwoatm{\Aatm \vatm}{\vatm} - \scalarprodLtwoocn{\Amocn \vocn}{\vocn} + \Coi(h_*) c_3 \scalarprodLtwoG{\ddnGao \vocn}{\vice}\\
    &\quad - c_3 \scalarprodLtwoG{\AH(u_*) \vice}{\vice} - c_3 \scalarprodLtwoG{\Bh(u_*) h}{\vice} - c_3 \scalarprodLtwoG{\Ba(u_*) a}{\vice}\\
    &\quad - c_4 \drh \scalarprodLtwoG{\DeltaH h}{h} + c_4 h_* \scalarprodLtwoG{\divH \vice}{h} - c_5 \dra \scalarprodLtwoG{\DeltaH a}{a} + c_5 a_* \scalarprodLtwoG{\divH \vice}{a}\\
    &\ge  \lambda \| \vatm \|_{\rLtwoatm}^2 + \lambda \| \vocn \|_{\rLtwoocn}^2 + \lambda c_3 \| \vice \|_{\rLtwoG}^2 + \lambda c_4 \| h \|_{\rLtwoG}^2 + \lambda c_5 \| a \|_{\rLtwoG}^2\\
    &\quad + C\left(\| \nabla \vatm \|_{\rLtwoOatm}^2 + \| \vocn \|_{\rH^1(\Omegaocn)}^2 + \| \nablaH \vice \|_{\rLtwoG}^2 + \| \nablaH h \|_{\rLtwoG}^2 + \| \nablaH a \|_{\rLtwoG}^2\right).
    \end{aligned}
\end{equation*}
The above relation can only hold provided $\lambda$ is real and $\lambda \le 0$.
Invoking \autoref{cor:functionalanalytic properties opmat}(g) on the $q$-independence of the spectrum of $\sA$, which carries over to $\sA_0$, we infer that indeed, $\sigma(\sA_0) \setminus \{0\} \subset \C_-$.
\end{proof}

\begin{lem}\label{lem:set of equilibria and tangent space}
Near $u_* = (0,0,0,h_*,a_*)$ with $h_* \in (\kappa_1,\kappa_2)$ and $a_* \in (0,1)$ constant in space and time, the set of equilibria $\sE$ is a $\rC^1$-manifold in $\sX_1$, and the tangent of $\sE$ at $u_*$ is isomorphic to $N(\sA_0)$.   
\end{lem}

\begin{proof}
We take into account equilibria $u = (\vatm,\vocn,\vice,h,a) \in V \cap \sX_1$ with $\| u - u_* \|_{\sX_\gamma} < r$ for given $r > 0$.
Multiplying the sea ice momentum part in the resulting stationary equation $0 = -\sA(u)u + \sF_s(u)$ by $\rice h$, we find that $u$ satisfies the equation
\begin{equation}\label{eq:mod eq for equilibrium}
    0 = \begin{pmatrix}
        -\Aatm \vatm + \bilin{\vatm}{\vatm}\\
        -\Amocn \vocn + \bilin{\vocn}{\vocn}\\
        \rice h\left(\Coi(h)  \ddnGao \vocn - \AH(\vice,h,a) \vice - \Bh(h,a)h - \Ba(h,a)a + \bilinHice)\right)\\
        -\Deltah h + \divH(\vice h)\\
        -\Deltaa a + \divH(\vice a)
    \end{pmatrix}.
\end{equation}

Again, we require some preparation.
First, we consider $u_i = (v_i,w_i)$ as well as $\Omega_i$, where $i=\atm$ or $i=\ocn$ depending on whether we study the terms associated to the atmosphere or the ocean.
For simplicity, we drop the index for the following computation, and we observe that $\div u = 0$ is valid in both cases.
Using this condition, the divergence theorem, the periodic boundary conditions on the lateral boundary as well as $w = 0$ on the upper and lower boundary, see \eqref{eq:boundary conditions w}, we obtain
\begin{equation*}
    \intO{(u \cdot \nabla)v \cdot v} = \frac{1}{2} \intO{\div(|v|^2 u)} = \frac{1}{2}\intpO{|v|^2 \tbinom{v}{w} \cdot \Vec{n}} = 0,
\end{equation*}
where $\Vec{n}$ denotes the outer normal vector.
For the oceanic term, it is crucial to note that $\Vec{n} = (0,0,\pm1)^\top$ on the upper and lower boundary, so the coupling condition with sea ice on $\Gao$ does not come into play.
For a similar result, we also refer to \cite[Lemma~6.3]{HK:16}.
By $\bilinu{u}{v} = \bilin{v}{v}$, we then get
\begin{equation}\label{eq:canc law pe}
\begin{aligned}
        \intatm{(\bilin{\vatm}{\vatm}) \cdot \vatm} &= 0, \quad \text{and}\\ 
        \intocn{(\bilin{\vocn}{\vocn}) \cdot \vocn} &= 0.
\end{aligned}
\end{equation}

We mainly write $\eps = \eps(\vice)$ in the sequel for convenience.
For $u \in V$, we especially deduce that
$    P(h,a) \ge p^* \kappa \exp{(-c)} =: P_{**}$ as well as $ \frac{1}{\trid(\eps)} \ge \frac{1}{\sqrt{\delta + c_e r^2}}$
for a suitable constant $c_e > 0$, see also the proof of Lemma~7.2 in \cite{BDHH:22}.
We emphasize that $P_{**}$ is independent of $u$, $\delta$ and $r$.
Using an integration by parts in conjunction with the periodic boundary conditions, exploiting the previous estimates, invoking $\eps^\top \mathbb{S} \eps = \tri^2(\eps) \ge 0$ by virtue of \cite[(4.4)]{BDHH:22} and employing Korn's inequality, we infer that
\begin{equation}\label{eq:int modified AH}
    \begin{aligned}
        -\scalarprodLtwoG{\rice h \AH(\vice,h,a) \vice}{\vice}
        &= -\intG{\divH\left(\frac{P(h,a)}{2} \frac{\mathbb{S} \eps}{\trid(\eps)}\right) \cdot \vice}\\
        &\ge \frac{P_{**}(1-\nicefrac{1}{e^2})}{2 \sqrt{\delta + c_e r^2}}\left( \| \divH \vice \|_{\rLtwoG}^2 + \| \eps(\vice)\|_{\rLtwoG}^2\right)\\
        &\ge \frac{C_K}{\sqrt{\delta + c_e r^2}} \| \nablaH \vice \|_{\rLtwoG}^2
    \end{aligned}
\end{equation}
for a suitable constant $C_K > 0$ emerging from Korn's inequality and incorporating $P_{**}(1-\nicefrac{1}{e^2})$.

For $u \in V \cap \sX_1$ with $\| u - u_* \|_{\sX_\gamma} < r$, we test the above equation \eqref{eq:mod eq for equilibrium} with
 $(\vatm,\vocn,c_3 \vice, c_4 h, c_5 a)$, where $c_3 = \frac{1}{\muocn }$, $c_4 = c_3 \frac{p^* \exp(-c(1-a_*))}{2 h_*}$, and $c_5 = c_3 \frac{c p^* h_* \exp(-c(1-a_*))}{2 a_*}$.
Making use of \eqref{eq:int by parts Aatm}, \eqref{eq:int by parts Amocn}, \eqref{eq:eq:int by parts hor laplacians}, \eqref{eq:canc law pe} as well as \eqref{eq:int modified AH}, remarking that
\begin{equation*}
    -\scalarprodLtwoG{\ddnGao \vocn}{\vice} + c_3 \muocn \scalarprodLtwoG{\ddnGao \vocn}{\vice} = 0
\end{equation*}
in view of the choice of $c_3$ above and integrating by parts, we obtain
\begin{equation}\label{eq:testing the equilibrium eq}
    \begin{aligned}
        0 &\ge \| \nabla \vatm \|_{\rLtwoOatm}^2 + C_1 \| \vocn \|_{\rH^1(\Omegaocn)}^2 + \frac{c_3 C_K}{\sqrt{\delta + c_e r^2}} \| \nablaH \vice \|_{\rLtwoG}^2 + c_4 \drh \| \nablaH h \|_{\rLtwoG}^2\\
        &\quad + c_5 \dra \| \nablaH a \|_{\rLtwoG}^2 + c_3 \rice \intG{h(\bilinHice) \cdot \vice}\\
        &\quad + \intG{\left(c_3 \frac{\partial_h P(h,a)}{2} - c_4 h\right) \nablaH h \cdot \vice} + \intG{\left(c_3 \frac{\partial_a P(h,a)}{2} - c_5 a\right) \nablaH a \cdot \vice}.
    \end{aligned}
\end{equation}

It remains to verify that the terms without sign in \eqref{eq:testing the equilibrium eq} can be absorbed.
Arguing as in the proof of Lemma~7.2 in \cite{BDHH:22}, and using \autoref{cor:functionalanalytic properties opmat}(e) for an embedding of the trace space $\sX_\gamma$, we find that there is a constant $C_* > 0$ such that
\begin{equation*}
\begin{aligned}
    \Big\| c_3 \frac{\partial_h P(h,a)}{2} - c_4 h \Big\|_{\rLinftyG} &= \frac{c_3}{2} \Big\| p^* \exp(-c(1-a)) - p^* \frac{h}{h_*} \exp(-c(1-a_*)) \Big\|_{\rLinftyG} &&\le C_* r, \andtext\\
    \Big\| c_3 \frac{\partial_a P(h,a)}{2} - c_5 a \Big\|_{\rLinftyG} &= \frac{c_3}{2} \Big\| c p^* h \exp(-c(1-a)) - c p^* h_* \frac{a}{a_*} \exp(-c(1-a_*)) \Big\|_{\rLinftyG} \hspace*{-1em} &&\le C_* r.
\end{aligned}
\end{equation*}
On the other hand, it follows from $\trGao \vocn = \vice$ on $\Gao$ as well as the continuity of the trace from $\rH^1(\Omegaocn)$ to $\rL^2(\Gao)$ that $\| \vice \|_{\rLtwoG}^2 = \| \trGao \vocn \|_{\rL^2(\Gao)}^2 \le C_\tr \| \vocn \|_{\rH^1(\Omegaocn)}^2$.
A concatenation of the previous two observations, H\"older's inequality and Young's inequality then leads to
\begin{equation}\label{eq:absorbing lower orders}
\begin{aligned}
    \intG{\Big(c_3 \frac{\partial_h P(h,a)}{2} - c_4 h\Big) \nablaH h \cdot \vice} &\ge - C_{**} r \left(\| \nablaH h \|_{\rLtwoG}^2 + \| \vocn \|_{\rH^1(\Omegaocn)}^2\right), \andtext\\
    \intG{\Big(c_3 \frac{\partial_a P(h,a)}{2} - c_5 a\Big) \nablaH a \cdot \vice} &\ge - C_{**} r \left(\| \nablaH a \|_{\rLtwoG}^2 + \| \vocn \|_{\rH^1(\Omegaocn)}^2\right)
\end{aligned}
\end{equation}
for another constant $C_{**} > 0$.
Similarly, we conclude
\begin{equation*}
    \begin{aligned}
        0 &\ge \| \nabla \vatm \|_{\rLtwoOatm}^2 + \left(C_1 - 2 C_{**} r - C_{***} (r + h_*) r\right) \| \vocn \|_{\rH^1(\Omegaocn)}^2\\
        &\quad + \left(\frac{c_3 C_K}{\sqrt{\delta + c_e r^2}} - C_{***}(r + h_*) r\right) \| \nablaH \vice \|_{\rLtwoG}^2 + (c_4 \drh - C_{**}r) \| \nablaH h \|_{\rLtwoG}^2\\
        &\quad + (c_5 \dra - C_{**}r) \| \nablaH a \|_{\rLtwoG}^2.
    \end{aligned}
\end{equation*}
Choosing $r$ sufficiently small, we deduce therefrom that
\begin{equation*}
    \begin{aligned}
        0 &\ge C\left(\| \nabla \vatm \|_{\rLtwoOatm}^2 + \| \vocn \|_{\rH^1(\Omegaocn)}^2 + \| \nablaH \vice \|_{\rLtwoG}^2 + \| \nablaH h \|_{\rLtwoG}^2 + \| \nablaH a \|_{\rLtwoG}^2\right)
    \end{aligned}
\end{equation*}
for some constant $C > 0$.
Consequently, for $u = (\vatm,\vocn,\vice,h,a) \in \sE$ with $u \in V \cap \sX_1$ and $\| u - u_* \|_{\sX_\gamma} < r$ for $r > 0$ sufficiently small, it holds that $\vatm$, $\vice$, $h$ and $a$ are constant and $\vocn = 0$.
As $\trGao \vocn = \vice$ on $\Gao = G$, it follows that $\vice = 0$ as well.

On the other hand, setting $\lambda = 0$ in the proof of \autoref{lem:spectral props tot lin}, we find that an element $u_0 \in N(\sA_0)$, with $u_0 = (v_{\atm,0}, v_{\ocn,0}, v_{\ice,0},h_0,a_0)$, has to satisfy that $v_{\atm,0}$, $v_{\ice,0}$, $h_0$ as well as $a_0$ are constant and $v_{\ocn,0} = 0$, implying that also $v_{\ice,0} = 0$ is valid due to the coupling condition.
As an element of this shape is especially contained in $N(\sA_0)$, we find that $\sE = N(\sA_0)$ holds in a neighborhood of $u_*$.

In summary, it is in particular valid that near $u_*$, the set of equilibria $\sE$ is a $\rC^1$-manifold in $\sX_1$ of dimension~$4$ and that the tangent space for $\sE$ at $u_*$ is isomorphic to $N(\sA_0)$.
\end{proof}

\begin{lem}\label{lem:0 semi-simple ev}
For the total linearization $\sA_0$ with domain $D(\sA_0) = \sX_1$, it holds that $0$ is a semi-simple eigenvalue of $\sA_0$, meaning that $N(\sA_0) \oplus R(\sA_0) = X_0$.    
\end{lem}

\begin{proof}
First, the structure of $\sA_0$ from \eqref{eq:tot lin} yields that $\Aatm$ can be investigated separately.
Using the representation of the hydrostatic Stokes operator with Neumann boundary conditions on the upper and lower boundary $\Aatm$ from the proof of Theorem~3.1 in \cite{GGHHK:17}, we find that $\Aatm$ inherits the property that $0$ is a semi-simple eigenvalue from the respective Neumann-Laplacian operator.
It is thus sufficient to consider the remaining matrix $\sA_0^{\mathrm{rem}}$ given by
\begin{equation*}
    \sA_0^{\mathrm{rem}} \begin{pmatrix} \vocn\\ \vice\\ h\\ a\end{pmatrix} = \begin{pmatrix}
        \Amocn \vocn\\
        -\Coi(h_*)  \ddnGao \vocn + \AH(u_*)\vice + \Bh(u_*) h + \Ba(u_*)a\\
        \Deltah h - h_* \divH \vice\\
        \Deltaa a - a_* \divH \vice
    \end{pmatrix}
\end{equation*}
on $\sX_0^\mathrm{rem} = \rLqocn \times \rLq(G)^2 \times \rLq(G) \times \rLq(G)$ with adjusted domain $D(\sA_0^{\mathrm{rem}})$.

In the sequel, we denote by $\rLq_0(G)$ the space of all $\rLq$-functions on $G$ with mean value zero, i.\ e.,
\begin{equation*}
    \frac{1}{|G|} \intG{b} = 0 \quad \text{for} \quad b \in \rLq_0(G).
\end{equation*}
We then study $\sA_{0,r}^{\mathrm{rem}}$, resulting from restricting $\sA_0^{\mathrm{rem}}$ to $\sX_{0,r}^\mathrm{rem} = \rLqocn \times \rLq(G)^2 \times \rLq_0(G) \times \rLq_0(G)$.
As in the proof of \autoref{lem:spectral props tot lin} for $\lambda = 0$, we obtain
\begin{equation*}
    0 \ge C\left(\| \vocn \|_{\rH^1(\Omegaocn)}^2 + \| \nablaH \vice \|_{\rLtwoG}^2 + \| \nablaH h \|_{\rLtwoG}^2 + \| \nablaH a \|_{\rLtwoG}^2\right)
\end{equation*}
when testing the equation $\sA_0^{\mathrm{rem}} (\vocn,\vice,h,a) = 0$ for $(\vocn,\vice,h,a) \in D(\sA_{0,r}^{\mathrm{rem}}) = D(\sA_0^\mathrm{rem}) \cap \sX_{0,r}^\mathrm{rem}$ suitably.
We deduce therefrom that $\vocn = 0$ and $\vice$, $h$ as well as $a$ are constant so that $\trGao \vocn = \vice$ on $\Gao$ and $h$, $a \in \rLq_0(G)$ yield that $\vice = 0$ as well as $h = a = 0$.
Consequently, $0$ is not an eigenvalue of $\sA_{0,r}^{\mathrm{rem}}$.
The compact resolvent, see \autoref{cor:functionalanalytic properties opmat}(g), which carries over to the present setting, then implies that $0 \in \rho(\sA_{0,r}^{\mathrm{rem}})$.

By the above argument, this time applied to $\sX_0^\mathrm{rem} = \rLqocn \times \rLq(G)^2 \times \rLq(G) \times \rLq(G)$ as the underlying ground space, we get that $N(\sA_0^{\mathrm{rem}}) = \{0\} \times \{0\} \times \R \times \R$.
Thus, to show that $\sX_0^\mathrm{rem} = N(\sA_0^{\mathrm{rem}}) + R(\sA_0^{\mathrm{rem}})$, 
it is sufficient to verify that $\rLqocn \times \rLq(G)^2 \times \rLq_0(G) \times \rLq_0(G) \subset R(\sA_0^{\mathrm{rem}})$.
To this end, we consider $f = (\focn,\fice,\frh,\fra) \in \rLqocn \times \rLq(G)^2 \times \rLq_0(G) \times \rLq_0(G)$.
By virtue of $0 \in \rho(\sA_{0,r}^{\mathrm{rem}})$, there exists $(\vocn,\vice,h,a) \in D(\sA_{0,r}^{\mathrm{rem}})$ such that
\begin{equation*}
    \sA_0^{\mathrm{rem}} (\vocn,\vice,h,a)^\top = \sA_{0,r}^{\mathrm{rem}} (\vocn,\vice,h,a)^\top = f,
\end{equation*}
resulting in $R(\sA_0^{\mathrm{rem}}) \subset \sX_{0,r}^\mathrm{rem}$ and thus also in $\sX_0^\mathrm{rem} = N(\sA_0^{\mathrm{rem}}) + R(\sA_0^{\mathrm{rem}})$ by the above argument.

It remains to check that $N(\sA_0^{\mathrm{rem}}) \cap R(\sA_0^{\mathrm{rem}}) = \{0\}$.
For $u = (v_{\ocn,0},v_{\ice,0},h_0,a_0) \in N(\sA_0^{\mathrm{rem}}) \cap R(\sA_0^{\mathrm{rem}})$, it follows from $N(\sA_0^{\mathrm{rem}}) = \{0\} \times \{0\} \times \R \times \R$ that $u = (0,0,\crh,\cra)$ for $\crh$ and $\cra$ constant.
On the other hand, by $u \in R(\sA_0^{\mathrm{rem}})$, there exists $u' = (\vocn,\vice,h,a) \in D(\sA_0^{\mathrm{rem}})$ such that $\sA_0^{\mathrm{rem}} u' = u$.
We then consider
\begin{equation*}
    u' = \begin{pmatrix} \vocn\\ \vice\\ h\\ a \end{pmatrix} = \begin{pmatrix} \vocn\\ \vice\\ \th + \oh\\ \ta + \oa \end{pmatrix}, \quad \text{where} \quad \oh = \frac{1}{|G|} \intG{h} \quad \text{and} \quad \th = h - \oh,
\end{equation*}
and likewise for $a$.
Since $\oh$ and $\oa$ are constant and $\th$, $\ta \in \rLq_0(G)$, we infer that $(0,0,\oh,\oa) \in N(\sA_0^{\mathrm{rem}})$ as well as $(\vocn,\vice,\th,\ta) \in D(\sA_{0,r}^{\mathrm{rem}})$.
Consequently, we have
\begin{equation*}
    \begin{pmatrix} 0\\ 0\\ \crh\\ \cra \end{pmatrix} = u = \sA_0^{\mathrm{rem}} u' = \sA_0^{\mathrm{rem}} \begin{pmatrix} \vocn\\ \vice\\ \th\\ \ta \end{pmatrix} + \sA_0^{\mathrm{rem}} \begin{pmatrix} 0\\ 0\\ \oh\\ \oa \end{pmatrix} = \sA_0^{\mathrm{rem}} \begin{pmatrix} \vocn\\ \vice\\ \th\\ \ta \end{pmatrix} = \sA_{0,r}^{\mathrm{rem}} \begin{pmatrix} \vocn\\ \vice\\ \th\\ \ta \end{pmatrix},
\end{equation*}
i.\ e., $u \in R(\sA_{0,r}^{\mathrm{rem}}) = \rLqocn \times \rLq(G)^2 \times \rLq_0(G) \times \rLq_0(G)$, so it follows that $\crh = \cra = 0$ and therefore, it holds that $u = 0$.
In summary, it is valid that $N(\sA_0^{\mathrm{rem}}) \oplus R(\sA_0^{\mathrm{rem}}) = \{0\}$.
In conjunction with the above argument concerning the treatment of $\Aatm$, the assertion of the lemma follows.
\end{proof}

\begin{lem}\label{lem:max reg sA(u_*)}
For $u_*$ as above, $-\sA(u_*)$ has the property of maximal regularity on $\sX_0$.    
\end{lem}

\begin{proof}
Combining the triangular structure of $\sA(u_*)$ as in \eqref{eq:sA(u_*)} and the maximal regularity of $\Aatm$, see \cite[Corollary~3.4]{GGHHK:17}, and of the horizontal Laplacians on $G$, we find that the task reduces to establishing maximal regularity of
\begin{equation*}
    \sAoi(u_*) = \begin{pmatrix}
        \Amocn & 0\\
        -\Coi(h_*)  \ddnGao & \AH(u_*)
    \end{pmatrix},
\end{equation*}
with adjusted domain $\sAoi(u_*)$, on the resulting ground space $\rLqocn \times \rLq(G)^2$.

As a result of the above argument in conjunction with \autoref{cor:functionalanalytic properties opmat}(c), there is $\omega_0 \in \R$ such that for all $\omega > \omega_0$, it holds that $-\sAoi(u_*) + \omega$ has the property of maximal regularity. 
It can be shown that $\omega_0$ can be chosen to be equal to the spectral bound of $s(\sAoi(u_*))$ of $\sAoi(u_*)$.
With regard to the $q$-independence of the spectrum, see \autoref{cor:functionalanalytic properties opmat}, carrying over to the present situation, it suffices to study the spectrum in the $\rL^2$-case.

Testing the equation $(\lambda-\sAoi(u_*)) (\vocn,\vice)^\top = 0$ with $(\vocn,c_1 \vice)$, where $c_1 = \frac{1}{\Coi{h_*}}$, and making use of \eqref{eq:int by parts Amocn} as well as \eqref{eq:int by parts AH}, we conclude that there is a constant $C > 0$ such that
\begin{equation*}
    0 \ge \lambda \| \vocn \|_{\rLtwoocn}^2 + \lambda c_1 \| \vice \|_{\rLtwoG}^2 + C\left(\| \vocn \|_{\rH^1(\Omegaocn)}^2 + \| \nablaH \vice \|_{\rLtwoG}^2\right),
\end{equation*}
so $s(\sAoi(u_*)) < 0$ by virtue of the coupling condition $\trGao \vocn = \vice$ on $\Gao$, and the result thus follows by the above arguments.
\end{proof}

\begin{proof}[Proof of \autoref{thm:second main thm}:]
Concatenating \autoref{lem:spectral props tot lin}, \autoref{lem:set of equilibria and tangent space}, \autoref{lem:0 semi-simple ev} and \autoref{lem:max reg sA(u_*)}, we find that an equilibrium $u_*$ of the above shape is normally stable. The generalized principle of linearized stability, see \cite[Theorem~5.3.1]{PS:16}, yields the assertion of \autoref{thm:second main thm}.
\end{proof}

\medskip 

{\bf Acknowledgements.}
Tim Binz would like to thank DFG for support through project~538212014.
Felix Brandt gratefully acknowledges the support by the German National Academy of Sciences Leopoldina through the Leopoldina Fellowship Program with grant number LPDS~2024-07.
Felix Brandt and Matthias Hieber would like to thank DFG for support through project FOR~5528.

\end{document}